\documentclass[12pt]{article}
\usepackage{amsmath,amsfonts,amsthm,amssymb,mathtools,enumerate}
\usepackage{mathrsfs,graphicx,color,latexsym, tikz, calc}

\usepackage{cite}
\usepackage{xcolor}
\usepackage[colorlinks=true,
linkcolor=blue,citecolor=blue,
urlcolor=green]{hyperref}

\usepackage{caption}
\captionsetup{%
  figurename=Fig.,
  tablename=Tab.
}
\usetikzlibrary{shadows}
\usetikzlibrary{patterns,arrows,decorations.pathreplacing}
\textwidth 160mm \textheight240mm \oddsidemargin=-0cm
\evensidemargin=0cm \topmargin=-1.5cm

\newtheorem{thm}{Theorem}
\newtheorem{lem}{Lemma}[section]
\newtheorem{prob}{Problem}
\newtheorem{conj}{Conjecture}

\newtheorem{clm}{Claim}[section]

\title{\bf \Large Disproof of a conjecture on the minimum spectral radius and the domination number
\footnote{This work is supported
by National Natural Science Foundation of China (Nos. 12061074; 12171154; 11971274). }}

\author{
{\small Yarong Hu$^{a,c}$,\ \ Zhenzhen Lou$^{b,}$\footnote{Corresponding author.
\newline{\it \hspace*{5mm}Email addresses:} xjdxlzz@163.com (Z. Lou).}, \ \ Qiongxiang Huang$^{a}$}\\[2mm]
\footnotesize $^a$
College of Mathematics and System Science,
Xinjiang University, Urumqi 830046, China\\
\footnotesize $^b$ College of Science, University of Shanghai for Science and Technology,
  Shanghai 200093, China\\
\footnotesize $^c$
School of Mathematics and Information Technology, Yuncheng University, Yuncheng 044000, China\\}

\date{ }

\begin{document}

\maketitle

\begin{abstract}
Let $G_{n,\gamma}$ be
the set of all connected graphs on $n$ vertices with  domination number $\gamma$.
A graph is called a minimizer graph if it attains the minimum spectral radius among
$G_{n,\gamma}$.
Very recently, Liu, Li and Xie [Linear Algebra and its Applications 673 (2023) 233--258] proved that
the minimizer graph over all graphs in $\mathbb{G}_{n,\gamma}$ must be a tree.
Moreover, they  determined  the minimizer graph among $G_{n,\lfloor\frac{n}{2}\rfloor}$ for even $n$,
and posed the conjecture  on the minimizer graph among $G_{n,\lfloor\frac{n}{2}\rfloor}$ for odd $n$.
In this paper, we disprove the conjecture
and completely determine  the unique minimizer graph among $G_{n,\lfloor\frac{n}{2}\rfloor}$ for odd $n$.
\\[1mm]

\noindent {\it AMS classification:} 05C50\\[1mm]
\noindent {\it Keywords}: Spectral radius; Domination number; Minimizer graph
\end{abstract}

\baselineskip=0.202in

\section{Introduction}

Throughout this paper, we only consider simple and  undirected  graphs. Let
$G$ be a graph with vertex set $V (G)$ and edge set $E(G)$.
A vertex subset $D \subseteq V (G)$  is called a \emph{dominating set} of  $G$ if every vertex in $V (G)\setminus D$ is adjacent
to (dominated by) a vertex in $D$.
 A vertex in the dominating set is called a \emph{dominating vertex}.
The \emph{domination number} of $G$, denoted by $\gamma(G)$, is   the minimum cardinality
of all dominating sets of $G$.
 For a graph $G$,
a dominating set is called a \emph{minimum dominating set} if its cardinality is $\gamma(G)$.
Let $A(G)$ be the adjacency matrix of $G$.
 The characteristic polynomial $f(G, x) = \det (xI_n -A(G))$ of the adjacency
matrix $A(G)$ of $G$ is called the \emph{characteristic polynomial} of $G$.
The \emph{spectral radius} $\rho(G)$ of $G$ is the largest eigenvalue
 of $A(G)$.

For a vertex $v \in  V (G)$, the neighborhood of $v$ is the set $N_G(v) = \{u \in V (G) | uv \in E(G)\}$.
The degree of $v \in V (G)$, denoted by $d_G(v)$, is defined as the number of neighbors of $v$ in $G$, i.e.
$d_G(v) = |N_G(v)|$.
Let $\Delta(G)$  be the maximum degree of $G$.
For  $v \in  V (G)$, if $d_G(v)\ge3$ then vertex
 $v$ is called a \emph{branching vertex};
 if $d_G(v)=1$ then vertex
 $v$ is called a \emph{pendant vertex}.
 A vertex $u \in  V (G)$  is
called a \emph{support vertex} if $u$ is the neighbor of a pendant vertex.
 For  $S \subseteq V (G)$, let $G[S]$ be the subgraph of $G$ induced by $S$,
 and let
 $G-S$ or $G-G[S]$ be the subgraph of $G$ by deleting all vertices in $S$ together with their incident edges.
 The \emph{corona $G \circ K_1$}
is the graph obtained from a copy of the graph $G$ by attaching a pendant vertex $v'$ to each vertex $v \in V (G )$.
As usual,
let  $K_n$ and $P_n$ be  respectively the complete graph and the path on $n$ vertices.

Brualdi and Solheid \cite{Brualdi-Solheid} proposed the following general problem, which becomes one of the
classic problems of spectral graph theory:
\begin{prob}\label{prob}
Given a set $\mathbb{G}$ of graphs on $n$ vertices, find $\min\{\rho(G): G \in \mathbb{G}\}$ and $\max\{\rho(G): G \in \mathbb{G}\}$, and characterize
the graphs which achieve the minimum or maximum value.
\end{prob}

The maximization part of this problem has been solved for a number of graph/tree classes with some invariant,
such as  the domination number \cite{Stevanovic-Aouchiche-Hansen-2008,Chen-He-2012},
 the number of cut vertices \cite{Berman-Zhang-2001},
 the clique number \cite{Nikiforov},
the chromatic number \cite{Feng-Li-Zhang-2007},
the matching number \cite{Feng-Yu-Zhang-2007,Guo-Wang-2001},
the diameter \cite{3-153,Guo-Shao-2006}, and
the independence number \cite{3-Stevanovic,Ji-Lu-2016}.
On the opposite side,
the minimization part of this problem
 has been studied, but not so much,
for example, the
independence number \cite{Lou-Guo-2022,Xu-Hong-Shu-Zhai-2009} and
the diameter \cite{Dam-Kooij-2007}.
For giving domination number $\gamma$, the research on the minimization part of Problem \ref{prob} is rare.
 It is trivial for the case $\gamma=\lceil\frac{n}{3}\rceil$, since it
is exactly the domination number of the $n$-vertices path which has the minimum spectral
radius among all connected graphs on $n$ vertices.
For $\gamma=\lfloor\frac{n}{2}\rfloor$, Aouchiche, Hansen and Stevanovi\'c \cite{Aouchiche-Hansen-Stevanovic-2009} determined that $P_{\frac{n}{2}}\circ K_1$ has the minimum spectral
radius among all  trees on even  $n$ vertices.
Very recently, Liu, Li and Xie \cite{Liu-li} showed
that
$P_{\frac{n}{2}}\circ K_1$  also has  the minimum spectral
radius among all  connected graphs on even  $n$ vertices.
In \cite{Liu-li}, the authors also characteristiced
the graph with  the minimum spectral
radius among all  connected graphs on   $n$ vertices with the domination number $3$.
For  the case $\gamma=2$,
 Guan and Ye \cite{Guan-Ye-2011} obtained the unique graph with the
minimum spectral radius in the class of connected graphs on  $n$ vertices.
Let $G_{n,\gamma}$ be
the set of all connected graphs on $n$ vertices with  domination number $\gamma$. Throughout
this paper, we say a graph is a \emph{minimizer graph} if it attains the minimum spectral radius among all graphs in
$G_{n,\gamma}$.

Let $P_{d+1}=u_1u_2\cdots u_{d+1}$ be a path with $d+1$ vertices.
For two integers $i,j$ with $0\le j\le i\le d+1$,
let $T_{i,j}^{d+1}$  be the graph on $i+j+d+1$ vertices  obtained from the path $P_{d+1}$ by attaching a pendant vertex to the  first $i$ vertices and the last $j$
vertices of $P_{d+1}$, respectively (see Fig.\ref{T}).
Let $n\ge 3$ be an odd integer.
For any $i\in [\lceil\frac{n-3}{4}\rceil,\frac{n-3}{2}]$,
it is clear that $T_{i,\frac{n-3}{2}-i}^{\frac{n+3}{2}} \in \mathbb{G}_{n,\lfloor\frac{n}{2}\rfloor}$ (see Fig.\ref{T}).

\begin{figure}[h]
  \centering
    \footnotesize
\unitlength 0.9mm 
\linethickness{0.4pt}
\ifx\plotpoint\undefined\newsavebox{\plotpoint}\fi 
\begin{picture}(165,18)(0,0)
\put(92,15){\circle*{2}}
\put(92,15){\line(0,-1){8}}
\put(92,7){\circle*{2}}
\put(92,15){\line(1,0){8}}
\put(100,15){\circle*{2}}
\put(100,15){\line(0,-1){8}}
\put(100,7){\circle*{2}}
\put(116,15){\circle*{2}}
\put(116,15){\line(0,-1){8}}
\put(116,7){\circle*{2}}
\put(99.93,14.93){\line(1,0){.9412}}
\put(101.812,14.93){\line(1,0){.9412}}
\put(103.694,14.93){\line(1,0){.9412}}
\put(105.577,14.93){\line(1,0){.9412}}
\put(107.459,14.93){\line(1,0){.9412}}
\put(109.341,14.93){\line(1,0){.9412}}
\put(111.224,14.93){\line(1,0){.9412}}
\put(113.106,14.93){\line(1,0){.9412}}
\put(114.989,14.93){\line(1,0){.9412}}
\put(91,18){$u_{1}$}
\put(99,18){$u_{2}$}
\put(115,18){$u_{i}$}
\put(122,18){$u_{i\!+\!1}$}
\put(130,18){$u_{i\!+\!2}$}
\put(138,18){$u_{i\!+\!3}$}
\put(147,18){$u_{i\!+\!4}$}
\put(162,18){$u_{\frac{n\!+\!3}{2}}$}
\put(131,0){$T^{\frac{n+3}{2}}_{i,\frac{n-3}{2}-i}$}
\put(116,15){\line(1,0){8}}
\put(124,15){\circle*{2}}
\put(124,15){\line(1,0){8}}
\put(132,15){\circle*{2}}
\put(132,15){\line(1,0){8}}
\put(140,15){\circle*{2}}
\put(140,15){\line(1,0){8}}
\put(148,15){\circle*{2}}
\put(148,15){\line(0,-1){8}}
\put(148,7){\circle*{2}}
\put(164,15){\circle*{2}}
\put(164,15){\line(0,-1){8}}
\put(164,7){\circle*{2}}
\put(147.93,14.93){\line(1,0){.9412}}
\put(149.812,14.93){\line(1,0){.9412}}
\put(151.694,14.93){\line(1,0){.9412}}
\put(153.577,14.93){\line(1,0){.9412}}
\put(155.459,14.93){\line(1,0){.9412}}
\put(157.341,14.93){\line(1,0){.9412}}
\put(159.224,14.93){\line(1,0){.9412}}
\put(161.106,14.93){\line(1,0){.9412}}
\put(162.989,14.93){\line(1,0){.9412}}
\put(1,15){\circle*{2}}
\put(1,15){\line(0,-1){8}}
\put(1,7){\circle*{2}}
\put(1,15){\line(1,0){8}}
\put(9,15){\circle*{2}}
\put(9,15){\line(0,-1){8}}
\put(9,7){\circle*{2}}
\put(25,15){\circle*{2}}
\put(25,15){\line(0,-1){8}}
\put(25,7){\circle*{2}}
\put(8.93,14.93){\line(1,0){.9412}}
\put(10.812,14.93){\line(1,0){.9412}}
\put(12.694,14.93){\line(1,0){.9412}}
\put(14.577,14.93){\line(1,0){.9412}}
\put(16.459,14.93){\line(1,0){.9412}}
\put(18.341,14.93){\line(1,0){.9412}}
\put(20.224,14.93){\line(1,0){.9412}}
\put(22.106,14.93){\line(1,0){.9412}}
\put(23.989,14.93){\line(1,0){.9412}}
\put(0,18){$u_{1}$}
\put(8,18){$u_{2}$}
\put(24,18){$u_{i}$}
\put(31,18){$u_{i\!+\!1}$}
\put(55,18){$u_{d\!+\!2\!-\!j}$}
\put(71,18){$u_{d+1}$}
\put(40,0){$T^{d+1}_{i,j}$}
\put(25,15){\line(1,0){8}}
\put(33,15){\circle*{2}}
\put(49,15){\circle*{2}}
\put(49,15){\line(1,0){8}}
\put(57,15){\circle*{2}}
\put(57,15){\line(0,-1){8}}
\put(57,7){\circle*{2}}
\put(73,15){\circle*{2}}
\put(73,15){\line(0,-1){8}}
\put(73,7){\circle*{2}}
\put(56.93,14.93){\line(1,0){.9412}}
\put(58.812,14.93){\line(1,0){.9412}}
\put(60.694,14.93){\line(1,0){.9412}}
\put(62.577,14.93){\line(1,0){.9412}}
\put(64.459,14.93){\line(1,0){.9412}}
\put(66.341,14.93){\line(1,0){.9412}}
\put(68.224,14.93){\line(1,0){.9412}}
\put(70.106,14.93){\line(1,0){.9412}}
\put(71.989,14.93){\line(1,0){.9412}}
\put(31.93,14.93){\line(1,0){.9444}}
\put(33.819,14.93){\line(1,0){.9444}}
\put(35.707,14.93){\line(1,0){.9444}}
\put(37.596,14.93){\line(1,0){.9444}}
\put(39.485,14.93){\line(1,0){.9444}}
\put(41.374,14.93){\line(1,0){.9444}}
\put(43.263,14.93){\line(1,0){.9444}}
\put(45.152,14.93){\line(1,0){.9444}}
\put(47.041,14.93){\line(1,0){.9444}}
\end{picture}
  \caption{ \footnotesize Graphs $T^{d+1}_{i,j}$ and  $T^{\frac{n+3}{2}}_{i,\frac{n-3}{2}-i}$.}\label{T}
\end{figure}
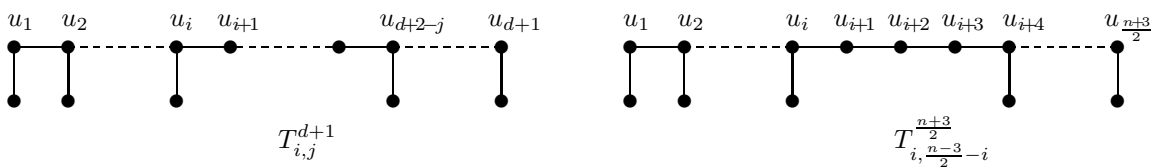

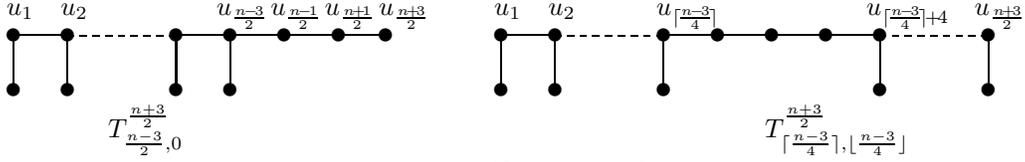
\begin{figure}[h]
  \centering
  \footnotesize
\unitlength 0.9mm 
\linethickness{0.4pt}
\ifx\plotpoint\undefined\newsavebox{\plotpoint}\fi 
\begin{picture}(146,18)(0,0)
\put(1,15){\circle*{2}}
\put(1,15){\line(0,-1){8}}
\put(1,7){\circle*{2}}
\put(1,15){\line(1,0){8}}
\put(9,15){\circle*{2}}
\put(9,15){\line(0,-1){8}}
\put(9,7){\circle*{2}}
\put(25,15){\circle*{2}}
\put(25,15){\line(0,-1){8}}
\put(25,7){\circle*{2}}
\put(8.93,14.93){\line(1,0){.9412}}
\put(10.812,14.93){\line(1,0){.9412}}
\put(12.694,14.93){\line(1,0){.9412}}
\put(14.577,14.93){\line(1,0){.9412}}
\put(16.459,14.93){\line(1,0){.9412}}
\put(18.341,14.93){\line(1,0){.9412}}
\put(20.224,14.93){\line(1,0){.9412}}
\put(22.106,14.93){\line(1,0){.9412}}
\put(23.989,14.93){\line(1,0){.9412}}
\put(0,18){$u_{1}$}
\put(8,18){$u_{2}$}
\put(31,18){$u_{\frac{n\!-\!3}{2}}$}
\put(39,18){$u_{\frac{n\!-\!1}{2}}$}
\put(47,18){$u_{\frac{n\!+\!1}{2}}$}
\put(55,18){$u_{\frac{n\!+\!3}{2}}$}
\put(15,0){$T^{\frac{n+3}{2}}_{\frac{n-3}{2},0}$}
\put(25,15){\line(1,0){8}}
\put(33,15){\circle*{2}}
\put(33,15){\line(1,0){8}}
\put(41,15){\circle*{2}}
\put(41,15){\line(1,0){8}}
\put(49,15){\circle*{2}}
\put(33,15){\line(0,-1){8}}
\put(33,7){\circle*{2}}
\put(48,15){\line(1,0){8}}
\put(56,15){\circle*{2}}
\put(73,15){\circle*{2}}
\put(73,15){\line(0,-1){8}}
\put(73,7){\circle*{2}}
\put(73,15){\line(1,0){8}}
\put(81,15){\circle*{2}}
\put(81,15){\line(0,-1){8}}
\put(81,7){\circle*{2}}
\put(97,15){\circle*{2}}
\put(97,15){\line(0,-1){8}}
\put(97,7){\circle*{2}}
\put(80.93,14.93){\line(1,0){.9412}}
\put(82.812,14.93){\line(1,0){.9412}}
\put(84.694,14.93){\line(1,0){.9412}}
\put(86.577,14.93){\line(1,0){.9412}}
\put(88.459,14.93){\line(1,0){.9412}}
\put(90.341,14.93){\line(1,0){.9412}}
\put(92.224,14.93){\line(1,0){.9412}}
\put(94.106,14.93){\line(1,0){.9412}}
\put(95.989,14.93){\line(1,0){.9412}}
\put(72,18){$u_{1}$}
\put(80,18){$u_{2}$}
\put(96,18){$u_{\lceil\!\frac{n\!-\!3}{4}\!\rceil}$}
\put(127,18){$u_{\lceil\!\frac{n\!-\!3}{4}\!\rceil\!+\!4}$}
\put(143,18){$u_{\frac{n\!+\!3}{2}}$}
\put(112,0){$T^{\frac{n+3}{2}}_{\lceil\frac{n-3}{4}\rceil, \lfloor\frac{n-3}{4}\rfloor}$}
\put(97,15){\line(1,0){8}}
\put(105,15){\circle*{2}}
\put(105,15){\line(1,0){8}}
\put(113,15){\circle*{2}}
\put(113,15){\line(1,0){8}}
\put(121,15){\circle*{2}}
\put(121,15){\line(1,0){8}}
\put(129,15){\circle*{2}}
\put(129,15){\line(0,-1){8}}
\put(129,7){\circle*{2}}
\put(145,15){\circle*{2}}
\put(145,15){\line(0,-1){8}}
\put(145,7){\circle*{2}}
\put(128.93,14.93){\line(1,0){.9412}}
\put(130.812,14.93){\line(1,0){.9412}}
\put(132.694,14.93){\line(1,0){.9412}}
\put(134.577,14.93){\line(1,0){.9412}}
\put(136.459,14.93){\line(1,0){.9412}}
\put(138.341,14.93){\line(1,0){.9412}}
\put(140.224,14.93){\line(1,0){.9412}}
\put(142.106,14.93){\line(1,0){.9412}}
\put(143.989,14.93){\line(1,0){.9412}}
\end{picture}

  \caption{\footnotesize Graphs $T^{\frac{n+3}{2}}_{\frac{n-3}{2},0}$ and
  $T^{\frac{n+3}{2}}_{\lceil\frac{n-3}{4}\rceil, \lfloor\frac{n-3}{4}\rfloor}$.}\label{exT}
\end{figure}

In \cite{Liu-li}, the authors determined the unique minimizer graph in $G_{n,\gamma}$, where $\gamma=\lfloor\frac{n}{2}\rfloor$ and $n$  is even.
\begin{thm}\label{thm-1}(\cite{Liu-li})
Let $n \ge 2$ be an even integer. For any graph $G \in \mathbb{G}_{n,\lfloor\frac{n}{2}\rfloor}$, we have $\rho(G)\ge \rho(T^{\frac{n}{2}}_{\frac{n}{2},0})$ and equality
holds if and only if  $G\cong T^{\frac{n}{2}}_{\frac{n}{2},0}$.
\end{thm}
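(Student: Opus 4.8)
The plan is to reduce the problem to a classical extremal question about the spectral radius of paths, exploiting the rigidity forced by the \emph{extremal} domination number $\gamma=\frac n2$. First I would invoke the classical characterization of connected graphs attaining the maximum possible domination number (Payan--Xuong; Fink--Jacobson--Kinch--Roberts): for a connected graph $G$ on $n$ vertices one has $\gamma(G)=\frac n2$ if and only if $G\cong C_4$ or $G\cong H\circ K_1$ for some connected graph $H$ on $\frac n2$ vertices. (Alternatively, since the excerpt already grants that the minimizer is a tree, one may restrict attention to trees, for which the same conclusion reads $G\cong H\circ K_1$ with $H$ a tree.) Thus, apart from the sporadic graph $C_4$ occurring only when $n=4$, every candidate is a corona $H\circ K_1$, and the search for the minimizer becomes a search over the ``base'' graph $H$.

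Next I would make the spectral radius of a corona explicit. Ordering the vertices of $H\circ K_1$ so that the $m=\frac n2$ vertices of $H$ come first, the adjacency matrix takes the block form $\left(\begin{smallmatrix}A(H)&I\\ I&O\end{smallmatrix}\right)$, and a Schur-complement computation gives the characteristic-polynomial identity
\[
f(H\circ K_1,\lambda)=\lambda^{m}\,f\!\left(H,\ \lambda-\tfrac1\lambda\right).
\]
Hence every eigenvalue $\mu$ of $A(H)$ contributes the two roots of $\lambda^2-\mu\lambda-1=0$, and the spectral radius of $H\circ K_1$ arises from the Perron value $\mu=\rho(H)$ as
\[
\rho(H\circ K_1)=\frac{\rho(H)+\sqrt{\rho(H)^2+4}}{2},
\]
which is a \emph{strictly increasing} function of $\rho(H)$.

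Finally I would combine this monotonicity with the fact, already recalled in the introduction, that among all connected graphs on $m$ vertices the path $P_m$ is the unique minimizer of the spectral radius. This yields $\rho(H)\ge\rho(P_{\frac n2})$ with equality if and only if $H\cong P_{\frac n2}$, and therefore $\rho(H\circ K_1)\ge\rho(P_{\frac n2}\circ K_1)$ with equality if and only if $H\cong P_{\frac n2}$. Since $P_{\frac n2}\circ K_1=T^{\frac n2}_{\frac n2,0}$, the asserted inequality and the uniqueness of the extremal graph follow, once the remaining case $G\cong C_4$ (only $n=4$) is dismissed by the direct comparison $\rho(C_4)=2>\tfrac{1+\sqrt5}{2}=\rho(P_4)=\rho(P_2\circ K_1)$. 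In this route the genuinely substantive step is the structural reduction to coronas; the spectral part is then essentially forced. The main obstacle is therefore whether one is willing to import the domination-theoretic classification: if instead one insists on a self-contained argument, the work shifts to proving directly that an extremal tree must be a corona $T\circ K_1$ (equivalently, that every support vertex carries exactly one leaf and no two support vertices coincide), after which the same corona identity and the path-minimality of $\rho$ close the argument.
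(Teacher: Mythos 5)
Your proposal is correct, but note first that the paper you were given never proves Theorem \ref{thm-1} at all: it is quoted from \cite{Liu-li}, and the route in the literature is quite different from yours. There, one first proves that a minimizer over all connected graphs in $\mathbb{G}_{n,\gamma}$ must be a tree (the paper's Lemma \ref{min-tree}, itself a nontrivial result of \cite{Liu-li}), and then invokes the tree case settled by Aouchiche--Hansen--Stevanovi\'c \cite{Aouchiche-Hansen-Stevanovic-2009}, namely that $P_{\frac n2}\circ K_1$ minimizes the spectral radius among trees on even $n$ vertices with $\gamma=\frac n2$. You instead bypass the tree-reduction entirely by importing the Payan--Xuong / Fink--Jacobson--Kinch--Roberts classification: every connected graph of even order $n$ with $\gamma=\frac n2$ is either $C_4$ or a corona $H\circ K_1$ with $H$ connected on $\frac n2$ vertices. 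After that, your argument is essentially forced and fully correct: the Schur-complement identity $f(H\circ K_1,\lambda)=\lambda^{m}f\bigl(H,\lambda-\tfrac1\lambda\bigr)$ gives exactly the paper's Lemma \ref{circ}, the map $\mu\mapsto\tfrac12\bigl(\mu+\sqrt{\mu^2+4}\bigr)$ is strictly increasing, the path $P_{\frac n2}$ is the unique minimizer of $\rho$ among connected graphs on $\frac n2$ vertices (Lov\'asz--Pelik\'an for trees plus Lemma \ref{subgraph} for non-trees), and $\rho(C_4)=2>\tfrac{1+\sqrt5}{2}=\rho(P_4)$ dismisses the sporadic case $n=4$. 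What each approach buys: yours is shorter and makes the extremal structure transparent, at the price of importing a domination-theoretic classification theorem; the literature's approach is heavier (the ``minimizer is a tree'' lemma requires real work) but is self-contained within spectral graph theory and, importantly, its tree-reduction lemma generalizes to arbitrary $\gamma$, which is what the present paper needs for the odd case, where no corona classification is available.
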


Meanwhile, the authors \cite{Liu-li} posed a conjecture as follows.

\begin{conj}\label{conj}(\cite{Liu-li})
 Let $n \ge 3$ be odd. For any graph $G \in \mathbb{G}_{n,\lfloor\frac{n}{2}\rfloor}$, then $\rho(G)\ge \rho(T^{\frac{n+3}{2}}_{\frac{n-3}{2},0})$ and equality
holds if and only if  $G\cong T^{\frac{n+3}{2}}_{\frac{n-3}{2},0}$.
\end{conj}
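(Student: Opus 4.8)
The plan is to follow the classical route for minimizing the spectral radius inside a structured family of trees, organized as three successive reductions. First, I would invoke the theorem of Liu, Li and Xie quoted in the introduction: the minimizer over all of $\mathbb{G}_{n,\lfloor\frac{n}{2}\rfloor}$ must be a tree. Hence it suffices to minimize $\rho$ over trees on $n$ (odd) vertices with domination number $\gamma=\frac{n-1}{2}$. Second, I would pin down the shape of such a minimizing tree using grafting and edge-relocation lemmas that do not increase $\rho$ while preserving the domination number: argue that the minimizer is a caterpillar (any branch hanging off the spine can be straightened without raising $\rho$), that its spine has $\frac{n+3}{2}$ vertices, and that the $\frac{n-3}{2}$ pendant legs cluster toward the two ends, leaving an uncovered middle segment. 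This forces the minimizer into the double-broom form $T^{\frac{n+3}{2}}_{i,j}$ with $i+j=\frac{n-3}{2}$, reducing the whole problem to the one-parameter family indexed by the split $(i,j)$.

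Third, the crux is to compare $\rho$ across splits and show it is minimized at the extreme one-sided configuration $(i,j)=(\frac{n-3}{2},0)$. Because these are all trees, the natural tool is the matching (Hosoya) quasi-order: if two trees $T,T'$ on the same number of vertices satisfy $m(T,k)\le m(T',k)$ for every $k\ge 0$, where $m(\cdot,k)$ counts $k$-matchings, then $\rho(T)\le\rho(T')$, with strict inequality when some count is strict. To establish the conjecture it would then suffice to prove $m(T^{\frac{n+3}{2}}_{\frac{n-3}{2},0},k)\le m(T^{\frac{n+3}{2}}_{i,j},k)$ for all $k$ and all splits with $j\ge 1$, i.e.\ that the one-sided broom has the fewest $k$-matchings of every order. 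The combinatorial engine is a single-step comparison between $T^{\frac{n+3}{2}}_{i,j}$ and $T^{\frac{n+3}{2}}_{i+1,j-1}$ (moving one leg from the short end to the long end), tracked through the deletion recursion $\phi(G,x)=x\,\phi(G-v,x)-\phi(G-v-u,x)$ applied at the relocated pendant, iterated along the spine so as to telescope the difference of matching polynomials.

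The main obstacle is exactly this final monotonicity, and it is where the argument is most at risk of breaking. The matching quasi-order is only a partial order, and there is no a priori guarantee that brooms with different splits are comparable in it: relocating a leg alters many matching counts simultaneously, so the sign of $m(T^{\frac{n+3}{2}}_{i+1,j-1},k)-m(T^{\frac{n+3}{2}}_{i,j},k)$ may itself depend on $k$. Should comparability fail, the quasi-order cannot order the spectral radii, and one must instead expand the characteristic polynomials $f(T^{\frac{n+3}{2}}_{i,j},x)$ in closed form along the spine (via a transfer-matrix or continued-fraction recursion) and compare their largest roots directly. I expect this direct comparison to be delicate because every graph in the family has maximum degree $3$ and almost identical degree sequences, so $\rho$ changes only slightly with the split; deciding the sign of the difference of Perron roots will hinge on sharp estimates of the Perron-vector entries along the spine near each end. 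This step is not merely technical but decisive: if those estimates instead show that balancing the legs lowers $\rho$, then the purported extremal graph $T^{\frac{n+3}{2}}_{\frac{n-3}{2},0}$ is not the minimizer, and the inequality as stated cannot hold.
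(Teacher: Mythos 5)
Your plan cannot succeed, and the failure mode you flag in your closing sentence is exactly what occurs: the statement is false, and the paper under review is devoted to \emph{disproving} it. Your first two reductions are sound and essentially reproduce the paper's structural lemmas (Lemma \ref{min-tree}: the minimizer is a tree; Lemmas \ref{caterpillar}, \ref{max-degree}, \ref{leave-number} and \ref{diameter}: it is a caterpillar with $\Delta=3$, at most one leaf per spine vertex, and diameter $\frac{n+5}{2}$), which force the minimizer into the one-parameter family $T^{\frac{n+3}{2}}_{i,\frac{n-3}{2}-i}$ with $i\in[\lceil\frac{n-3}{4}\rceil,\frac{n-3}{2}]$. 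But the decisive monotonicity runs opposite to what the conjecture needs: Claim \ref{claim-2} in the proof of Theorem \ref{extremal-graph} establishes $\rho(T^{\frac{n+3}{2}}_{i,\frac{n-3}{2}-i})<\rho(T^{\frac{n+3}{2}}_{i+1,\frac{n-3}{2}-i-1})$ for every $i\in[\lceil\frac{n-3}{4}\rceil,\frac{n-5}{2}]$, so shifting legs toward one end strictly \emph{increases} the spectral radius; the unique minimizer is the balanced broom $T^{\frac{n+3}{2}}_{\lceil\frac{n-3}{4}\rceil,\lfloor\frac{n-3}{4}\rfloor}$, and the conjectured graph $T^{\frac{n+3}{2}}_{\frac{n-3}{2},0}$ is in fact the extreme case at the wrong end of the family. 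The mechanism is precisely the telescoping you propose, but with the sign resolved against you: iterating Lemma \ref{qubian} along the spine yields $f(T^{d+1}_{i,j},x)-f(T^{d+1}_{i+1,j-1},x)=\bigl[f(T^{d+1-(2j-2)}_{i-j+1,1},x)-f(T^{d+1-(2j-2)}_{i-j+2,0},x)\bigr]x^{2j-2}$, and Lemmas \ref{subdivision}, \ref{subgraph} and \ref{compare} show the bracket is positive for all $x\ge\rho(T^{d+1-(2j-2)}_{i-j+1,1})$, whence $\rho(T^{d+1}_{i,j})<\rho(T^{d+1}_{i+1,j-1})$.

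Two consequences for your proposed engine. Since for a tree the characteristic polynomial \emph{is} the matching polynomial, $f(T,x)=\sum_{k}(-1)^k m(T,k)x^{n-2k}$, the quasi-order dominance you hope for, $m(T^{\frac{n+3}{2}}_{\frac{n-3}{2},0},k)\le m(T^{\frac{n+3}{2}}_{i,j},k)$ for all $k$, is not merely possibly incomparable but outright false already for the single-step move $(i,j)\mapsto(i+1,j-1)$: the displayed identity shows the one-sided shift can only raise $\rho$. Concretely, at $n=9$ the conjectured minimizer is $T^{6}_{3,0}\cong H_4$ with $\rho=2.1120$, while $T^{6}_{2,1}\cong H_7$ has $\rho=2$ (Fig.\ref{H9-11}); and already at $n=7$ Theorem \ref{extremal-graph} gives the unique minimizer $P_7\cong T^{5}_{1,1}\in\mathbb{G}_{7,3}$, so $\rho(T^{5}_{2,0})>\rho(P_7)$ although $T^{5}_{2,0}$ is the conjectured graph. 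So the correct completion of your third step is not a proof of Conjecture \ref{conj} but Theorem \ref{extremal-graph}: balancing the legs lowers $\rho$, and the conjecture fails for every odd $n\ge7$.
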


In this paper, we
determine  the minimizer graph among $G_{n,\lfloor\frac{n}{2}\rfloor}$ for odd $n$ (Theorem \ref{extremal-graph}) and then disprove Conjecture \ref{conj}.

\begin{thm}\label{extremal-graph}
 Let $n \ge 3$ be an odd integer. For any graph $G\in \mathbb{G}_{n,\lfloor\frac{n}{2}\rfloor}$, we have $\rho(G)\ge \rho(T^{\frac{n+3}{2}}_{\lceil\frac{n-3}{4}\rceil, \lfloor\frac{n-3}{4}\rfloor })$, with equality
 if and only if  $G\cong T^{\frac{n+3}{2}}_{\lceil\frac{n-3}{4}\rceil, \lfloor\frac{n-3}{4}\rfloor }$.
\end{thm}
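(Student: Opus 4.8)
The plan is to reduce, through domination-preserving transformations, to the one-parameter subfamily of caterpillars already drawn in Fig.~\ref{exT}, and then to run an exchange argument that pins down the position of the pendant-free part of the backbone. Since Liu, Li and Xie have shown that a minimizer over $\mathbb{G}_{n,\gamma}$ is necessarily a tree, it suffices to minimize $\rho$ over all trees on $n$ vertices with $\gamma=\frac{n-1}{2}$. First I would apply a sequence of grafting/straightening operations, each of which does not increase the spectral radius while preserving the domination number: any pendant path or branch that is not a single pendant is straightened onto a longest path, and any support vertex carrying two or more pendants is thinned, the excess vertices being reattached so that $\gamma$ is unchanged. This forces the extremal tree to be a caterpillar whose support vertices each carry exactly one pendant. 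A counting argument constrained by $\gamma=\frac{n-1}{2}$ then forces the backbone to be $P_{\frac{n+3}{2}}$ with exactly $2\cdot\frac{n+3}{2}-n=3$ pendant-free backbone vertices. Writing $s=\frac{n-3}{2}$ and abbreviating $T_{p}:=T^{\frac{n+3}{2}}_{p,\,s-p}$, the task is reduced to choosing, among all caterpillars with $s$ single pendants and three pendant-free backbone vertices, the placement that minimizes $\rho$.

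The engine of the optimization is the elimination of the pendants from the eigenvalue equation. If $x$ is the Perron vector with $A(G)x=\rho x$ and $w$ is the pendant at a backbone vertex $u_k$, then $\rho x_{w}=x_{u_k}$, so substituting $x_{w}=x_{u_k}/\rho$ shows that the backbone values $y_k:=x_{u_k}$ satisfy $y_{k-1}+y_{k+1}=\bigl(\rho-\tfrac1\rho\bigr)y_k$ at a support vertex and $y_{k-1}+y_{k+1}=\rho\,y_k$ at a pendant-free vertex. Thus a support vertex behaves like a backbone site of strictly lower ``on-site energy'' than a pendant-free one, and the heuristic is that $\rho$ is smallest when the three pendant-free (high-energy) sites sit where the backbone Perron weight is largest, i.e.\ at the centre --- which is exactly the balanced graph $T_{\lceil s/2\rceil}$ rather than the conjectured $T_{s}$. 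To make this rigorous I would compare $T_{p}$ with $T_{p-1}$, which differ only by sliding the block of three pendant-free vertices one step toward the centre: passing from $T_{p-1}$ to $T_{p}$ deletes the pendant at $u_{p+3}$ and attaches a pendant at $u_{p}$. Feeding the Perron vector $x$ of $T_{p-1}$ into the Rayleigh quotient of $T_{p}$ gives
\[
\rho(T_{p})\;\ge\;\rho(T_{p-1})+\frac{2\,x_{u_{p+3}}\bigl(x_{u_{p}}-x_{u_{p+3}}\bigr)}{\rho\,\|x\|^{2}},
\]
so the whole comparison collapses to the single sign condition $x_{u_{p}}>x_{u_{p+3}}$ in $T_{p-1}$ whenever the left comb is at least as large as the right one, with equality in the Rayleigh step impossible because the symmetric $x$ is not an eigenvector of the asymmetric $T_{p}$.

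The step I expect to be the main obstacle is therefore the Perron-vector monotonicity lemma: in $T^{\frac{n+3}{2}}_{a,b}$ with $a\ge b$, the backbone value at the gap-edge adjacent to the larger comb exceeds that at the gap-edge adjacent to the smaller comb, strictly when $a>b$. Intuitively the heavier comb pulls more Perron weight to its side, but turning this into a proof requires genuine work --- I would establish it either by comparing $T_{a,b}$ with its mirror image $T_{b,a}$ through the positivity of the Perron vector, or by an explicit monotonicity analysis of the transfer recursion $y_{k-1}+y_{k+1}=(\rho-\tfrac1\rho)y_k$ along each comb together with the boundary behaviour at the two ends. Granting this lemma, the displayed inequality yields the strict chain $\rho(T_{s})>\rho(T_{s-1})>\cdots>\rho(T_{\lceil s/2\rceil})$, so $T_{\lceil s/2\rceil}=T^{\frac{n+3}{2}}_{\lceil\frac{n-3}{4}\rceil,\lfloor\frac{n-3}{4}\rfloor}$ is the unique minimizer within the family; uniqueness over all of $\mathbb{G}_{n,\lfloor n/2\rfloor}$ then follows by checking that each reduction step above is strictly decreasing unless the graph already has the required form. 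In particular this refutes Conjecture~\ref{conj}, whose graph $T_{s}$ sits at the top of the chain rather than the bottom.
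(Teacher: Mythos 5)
There is a genuine gap, and it sits exactly where you yourself predict it: the Perron-vector monotonicity lemma. Your Rayleigh-quotient computation is algebraically correct, but it reduces the entire theorem to the sign condition $x_{u_p}>x_{u_{p+3}}$ in $T_{p-1}$, which you state as a lemma and do not prove --- and neither of the two strategies you sketch for it obviously works. The two vertices being compared have different degrees ($u_p$ is a pendant-free backbone vertex of degree $2$, while $u_{p+3}$ is a support vertex of degree $3$), so they are not exchanged by any automorphism of the graph. In particular, in the balanced case $p-1=s-p+1$ --- which is the decisive last link of your chain whenever $s$ is even --- the mirror symmetry of $T_{p-1}$ only gives $x_{u_{p+3}}=x_{u_{p-1}}$ and $x_{u_p}=x_{u_{p+2}}$, so the mirror-comparison idea reduces your inequality to $x_{u_p}>x_{u_{p-1}}$, a comparison between \emph{adjacent} vertices of different types about which symmetry and Perron positivity say nothing. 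The transfer-recursion route must likewise cope with the fact that the backbone entries satisfy different three-term relations at support and non-support sites ($y_{k-1}+y_{k+1}=(\rho-\tfrac1\rho)y_k$ versus $y_{k-1}+y_{k+1}=\rho y_k$), so the profile is not governed by a single linear recursion and there is no off-the-shelf monotonicity along the backbone to invoke. Since every other step of your optimization funnels into this one unproven inequality, the proof is incomplete at its load-bearing joint.

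For contrast, the paper proves precisely this chain (its Claim \ref{claim-2}: $\rho(T^{\frac{n+3}{2}}_{i,\frac{n-3}{2}-i})<\rho(T^{\frac{n+3}{2}}_{i+1,\frac{n-3}{2}-i-1})$) with no Perron-entry estimates at all: it applies the edge-deletion identity of Lemma \ref{qubian} repeatedly to telescope $f(T^{d+1}_{i,j},x)-f(T^{d+1}_{i+1,j-1},x)$ down to $[f(T^{d+1-(2j-2)}_{i-j+1,1},x)-f(T^{d+1-(2j-2)}_{i-j+2,0},x)]\,x^{2j-2}$, and then controls the bracket by Hoffman--Smith subdivision (Lemma \ref{subdivision}) together with Lemmas \ref{subgraph} and \ref{compare}; substituting that telescoping for your missing lemma would close the hole. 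Be aware also that your first paragraph compresses into assertions what is the bulk of the paper's length: that the thinning/straightening operations can always be arranged to preserve $\gamma$ \emph{exactly} is the delicate content of Lemmas \ref{leave-number}--\ref{caterpillar} (each step requires explicit dominating-set surgery); your counting argument does not by itself force the backbone, since counting only pins the diameter to one of $\frac{n+3}{2}$ or $\frac{n+5}{2}$ and the smaller value must be excluded by a further spectral argument (Lemma \ref{diameter}); and the cases $3\le n\le 11$ fall outside all of these structural lemmas and need separate treatment, which the paper carries out by enumeration.
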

\section{Preliminary}
\begin{lem}[\cite{Cvetkovic-Rowlinson-Simic-2010}]\label{subgraph}
 If $H$ is a subgraph of the connected graph $G$, then $\rho(H) \le \rho(G)$. In particular, if $H$ is proper, then
$\rho(H) < \rho(G)$.
\end{lem}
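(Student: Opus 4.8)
The plan is to deduce both assertions from the Perron--Frobenius theorem combined with the Rayleigh quotient characterization of the largest eigenvalue of a symmetric matrix. Since $G$ is connected, its adjacency matrix $A(G)$ is a symmetric, nonnegative, irreducible matrix; hence by Perron--Frobenius $\rho(G)$ is a simple eigenvalue admitting a strictly positive eigenvector, and
\[
\rho(G) = \max_{y \ne 0} \frac{y^{\top} A(G)\, y}{y^{\top} y}.
\]
The whole argument rests on viewing $A(H)$ as a zero-padded principal submatrix of $A(G)$, so that $A(H) \le A(G)$ entrywise.

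First I would establish the non-strict inequality. Let $z$ be a nonnegative eigenvector of $A(H)$ associated with $\rho(H)$, which exists because $A(H)$ is symmetric and nonnegative, and let $\tilde z$ be its extension to $V(G)$ obtained by placing zeros on the coordinates indexed by $V(G)\setminus V(H)$. Using $\tilde z \ge 0$ together with $A(H) \le A(G)$ entrywise, I get
\[
\rho(H) = \frac{\tilde z^{\top} A(H)\, \tilde z}{\tilde z^{\top} \tilde z} \le \frac{\tilde z^{\top} A(G)\, \tilde z}{\tilde z^{\top} \tilde z} \le \rho(G),
\]
where the last step is the Rayleigh bound. This yields $\rho(H) \le \rho(G)$.

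For the strict inequality, suppose $H$ is proper and, for contradiction, that $\rho(H) = \rho(G)$; then both inequalities above are equalities. Equality in the Rayleigh step forces $\tilde z$ to be an eigenvector of $A(G)$ for the eigenvalue $\rho(G)$, so by Perron--Frobenius and the irreducibility of $A(G)$ this eigenvector is unique up to scaling and strictly positive, giving $\tilde z > 0$. Equality in the first step gives $\tilde z^{\top}\bigl(A(G) - A(H)\bigr)\tilde z = 0$, and since $A(G) - A(H) \ge 0$ and $\tilde z \ge 0$ this forces $\tilde z_u \tilde z_v = 0$ for every edge $uv \in E(G)\setminus E(H)$. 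I then split into the two ways $H$ can be proper: if $V(H) \subsetneq V(G)$ then $\tilde z$ has a zero coordinate, contradicting $\tilde z > 0$; and if $V(H) = V(G)$ but $E(H) \subsetneq E(G)$, then choosing any edge $uv \in E(G)\setminus E(H)$ gives $\tilde z_u \tilde z_v = 0$, again contradicting $\tilde z > 0$. Either way we reach a contradiction, so $\rho(H) < \rho(G)$.

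The only delicate point is the equality analysis: one must invoke the irreducibility of $A(G)$, guaranteed precisely by the connectedness hypothesis on $G$, to conclude that the optimizing vector is strictly positive, and then treat uniformly both senses of ``proper'' (a missing vertex versus a missing edge on the same vertex set). Everything else is routine once $A(H)$ is identified with a zero-padded submatrix of $A(G)$ and the Rayleigh characterization is applied.
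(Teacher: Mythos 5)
Your proposal is correct: padding the nonnegative Perron eigenvector of $A(H)$ with zeros, applying the Rayleigh-quotient bound for the symmetric matrix $A(G)$, and running the equality analysis through the irreducibility of $A(G)$ (treating separately a missing vertex and a missing edge on the same vertex set) is a complete and sound argument for both the weak and the strict inequality. The paper itself offers no proof of this lemma---it is quoted from the book of Cvetkovi\'{c}, Rowlinson and Simi\'{c}---and your argument is precisely the standard Perron--Frobenius/Rayleigh proof found in that reference, so it matches the intended source argument.
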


\begin{lem}[\cite{Li-Feng-1979}]\label{lahuan}
Let $v$ be a vertex in a graph $G$ and
suppose that two new paths $P$: $vv_1v_2 \cdots v_k$ and
$Q$: $vu_1u_2\cdots u_m$ of length $k$, $m$
($k \ge m \ge 1$) are attached to $G$ at $v$, respectively, to form a new graph $G_{k,m}$.
Then $\rho(G_{k,m}) >\rho(G_{k+1,m-1})$.
\end{lem}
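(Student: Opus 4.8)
The plan has two stages: first reduce an arbitrary minimizer to the one-parameter family $T_i:=T^{\frac{n+3}{2}}_{i,\,\frac{n-3}{2}-i}$, then compare the members of this family. Writing $j:=\tfrac{n-3}{2}-i$ and using the reflection isomorphism $T_i\cong T_j$, I may assume $\lceil\tfrac{n-3}{4}\rceil\le i\le\tfrac{n-3}{2}$, i.e.\ $i\ge j$. By the theorem of Liu, Li and Xie \cite{Liu-li} recalled in the introduction, a minimizer over $\mathbb{G}_{n,\lfloor n/2\rfloor}$ is necessarily a tree $T$. I would then argue that a $\rho$-minimal tree with $\gamma(T)=\frac{n-1}{2}$ must be some $T_i$: applying Lemma \ref{lahuan} to straighten any path hanging off the main path, Lemma \ref{subgraph} to forbid superfluous edges or doubled pendants, and tracking a minimum dominating set so that $\gamma$ is preserved at each surgery, one forces $T$ to be a caterpillar whose spine $P_{(n+3)/2}$ carries $\frac{n-3}{2}$ single pendants with exactly three \emph{consecutive} bare spine vertices. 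I expect this reduction --- in particular, forcing the three bare vertices to be consecutive while holding $\gamma$ fixed --- to be the most laborious step and the real obstacle; the family comparison below is then decisive and is what refutes Conjecture \ref{conj}.

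For the comparison I pass to characteristic polynomials. Let $S$ be the set of support vertices; eliminating the $|S|$ pendant rows and columns by a Schur complement yields
\[
\phi(T_i,x)=x^{|S|}\,D^{(i)}(x),
\]
where $D^{(i)}(x)$ is the determinant of the tridiagonal spine matrix with off-diagonal $-1$, diagonal $y:=x-\tfrac1x$ at each support, and diagonal $x$ at each of the three bare vertices. As $\rho(T_i)>0$, the quantity $\rho_i:=\rho(T_i)$ is the largest root of $D^{(i)}$. Expanding this tridiagonal determinant as a product of $2\times2$ transfer matrices along the spine gives
\[
D^{(i)}(x)=\bigl[A^{\,j}B^{3}A^{\,i}\bigr]_{11},\qquad
A=\begin{pmatrix} y & -1\\ 1 & 0\end{pmatrix},\quad
B=\begin{pmatrix} x & -1\\ 1 & 0\end{pmatrix}.
\]

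Now I compare $D^{(i+1)}$ and $D^{(i)}$, which differ only by one factor $A$ migrating from the right comb to the left comb: algebraically $D^{(i+1)}-D^{(i)}=\bigl[A^{\,j-1}(B^{3}A-AB^{3})A^{\,i}\bigr]_{11}$. Since $B=A+\tfrac1x E_{11}$ with $E_{11}=\bigl(\begin{smallmatrix}1&0\\0&0\end{smallmatrix}\bigr)$, a direct computation collapses the commutator to $B^{3}A-AB^{3}=-y\bigl(\begin{smallmatrix}0&1\\1&0\end{smallmatrix}\bigr)$, and the Chebyshev identities $U_aU_b-U_{a-1}U_{b+1}=-U_{a-b-2}$ and $U_{-k}=-U_{k-2}$ (for $U_k(y)$ defined by $U_k=yU_{k-1}-U_{k-2}$, $U_0=1$, $U_{-1}=0$) reduce the whole difference to the single term
\[
D^{(i+1)}(x)-D^{(i)}(x)=-\,y\,U_{\,i-j}(y).
\]

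It remains to fix the sign of this at $x=\rho_i$, where $D^{(i)}(\rho_i)=0$. For $\rho_i>1$ one has $y>0$, so the sign is that of $-U_{i-j}(y)$. The largest root of $U_k$ is $2\cos\frac{\pi}{k+1}$, which is exactly $y\bigl(\rho(P_k\circ K_1)\bigr)$, because $\phi(P_k\circ K_1,x)=x^{k}U_k(y)$; hence $U_{i-j}(y(\rho_i))>0$ as soon as $\rho_i>\rho(P_{\,i-j}\circ K_1)$. Since $P_{\,i-j}\circ K_1\subseteq P_i\circ K_1\subsetneq T_i$, Lemma \ref{subgraph} delivers precisely this strict inequality. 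Therefore $D^{(i+1)}(\rho_i)=-y\,U_{i-j}(y(\rho_i))<0$; as $D^{(i+1)}(x)\to+\infty$ and $\rho_{i+1}$ is its largest root, we conclude $\rho_{i+1}>\rho_i$. Thus $\rho(T_i)$ is strictly increasing on $\lceil\tfrac{n-3}{4}\rceil\le i\le\tfrac{n-3}{2}$, so the minimum is attained \emph{uniquely} at the balanced caterpillar $i=\lceil\tfrac{n-3}{4}\rceil$, i.e.\ at $T^{\frac{n+3}{2}}_{\lceil\frac{n-3}{4}\rceil,\lfloor\frac{n-3}{4}\rfloor}$, rather than at the one-sided graph $T^{\frac{n+3}{2}}_{\frac{n-3}{2},0}$ predicted by Conjecture \ref{conj}. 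Combined with the strict drops in $\rho$ from Stage 1, this yields the asserted uniqueness over all of $\mathbb{G}_{n,\lfloor n/2\rfloor}$.
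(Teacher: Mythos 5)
You have proved the wrong statement. The statement under review is Lemma~\ref{lahuan}, the classical path-shifting lemma of Li and Feng: if two paths of lengths $k\ge m\ge 1$ are attached to a graph $G$ at a common vertex $v$, then $\rho(G_{k,m})>\rho(G_{k+1,m-1})$. What you have written is instead an outline of the paper's main result, Theorem~\ref{extremal-graph} (reduction of a minimizer in $\mathbb{G}_{n,\lfloor n/2\rfloor}$ to the caterpillar family $T^{\frac{n+3}{2}}_{i,\frac{n-3}{2}-i}$, followed by a monotonicity comparison within that family). Worse, as a purported proof of Lemma~\ref{lahuan} your argument is circular: your Stage~1 explicitly invokes Lemma~\ref{lahuan} (``applying Lemma~\ref{lahuan} to straighten any path hanging off the main path''), i.e.\ it assumes the very statement to be proved. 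Note also that the paper offers no proof of this lemma at all --- it is quoted from \cite{Li-Feng-1979} --- so there is no internal proof to compare against, and a blind attempt would have to establish the inequality from scratch.

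A genuine proof would have to work directly with $G_{k,m}$ versus $G_{k+1,m-1}$, for instance by the standard characteristic-polynomial route: apply the edge-deletion recurrence (as in Lemma~\ref{qubian}) at the ends of the two attached paths to express $f(G_{k+1,m-1},x)-f(G_{k,m},x)$ as $f(G-v,x)$ (or a similar fixed factor) times a difference of products of path polynomials, $f(P_k,x)f(P_{m-1},x)-f(P_{k+1},x)f(P_{m-2},x)$, which collapses via Chebyshev-type identities to a single sign-definite term for all $x\ge\rho(G_{k,m})$; one then concludes by a comparison lemma such as Lemma~\ref{compare}. Alternatively one can argue with the Perron eigenvector of $G_{k+1,m-1}$, comparing its entries along the two pendant paths to exhibit a strict Rayleigh-quotient improvement after the shift. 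Your transfer-matrix computation in Stage~2 is in the right spirit for this kind of comparison, but it is carried out for the specific combs $T^{d+1}_{i,j}$ (pendant vertices on every spine vertex except three), not for the general configuration of Lemma~\ref{lahuan}, where $G$ is an arbitrary graph and the attached paths are bare; it therefore does not yield the lemma even as a special case.
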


An internal path of $G$ is a sequence of vertices
 $v_1, v_2, \ldots , v_k$ with $k \ge 2$ such that:\\
(1) the vertices in the sequence are distinct (except possibly $v_1 = v_k$);\\
(2) $v_i$ is adjacent to $v_{i+1}$
($i=1, 2, \ldots , k-1$);\\
(3) the vertex degrees satisfy
$d_G(v_1)\ge 3, d_G(v_2) =\cdots= d_G(v_{k-1}) = 2 $
(unless $k = 2$) and $d_G(v_k) \ge3$.

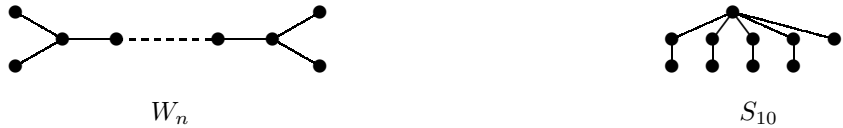
\begin{figure}[h]
  \centering
  \footnotesize
\unitlength 0.9mm 
\linethickness{0.4pt}
\ifx\plotpoint\undefined\newsavebox{\plotpoint}\fi 
\begin{picture}(123,17)(0,0)
\multiput(1,16)(.058823529,-.033613445){119}{\line(1,0){.058823529}}
\put(1,16){\circle*{2}}
\multiput(8,12)(-.058823529,-.033613445){119}{\line(-1,0){.058823529}}
\put(1,8){\circle*{2}}
\put(8,12){\circle*{2}}
\put(8,12){\line(1,0){8}}
\put(16,12){\circle*{2}}
\put(31,12){\circle*{2}}
\put(31,12){\line(1,0){8}}
\put(39,12){\circle*{2}}
\multiput(39,12)(.058823529,.033613445){119}{\line(1,0){.058823529}}
\put(46,16){\circle*{2}}
\multiput(39,12)(.058823529,-.033613445){119}{\line(1,0){.058823529}}
\put(46,8){\circle*{2}}
\put(15.93,11.93){\line(1,0){.9375}}
\put(17.805,11.93){\line(1,0){.9375}}
\put(19.68,11.93){\line(1,0){.9375}}
\put(21.555,11.93){\line(1,0){.9375}}
\put(23.43,11.93){\line(1,0){.9375}}
\put(25.305,11.93){\line(1,0){.9375}}
\put(27.18,11.93){\line(1,0){.9375}}
\put(29.055,11.93){\line(1,0){.9375}}
\put(21,0){$W_n$}
\put(108,0){$S_{10}$}
\put(104,12){\circle*{2}}
\put(104,12){\line(0,-1){4}}
\put(104,8){\circle*{2}}
\put(110,12){\circle*{2}}
\put(110,12){\line(0,-1){4}}
\put(110,8){\circle*{2}}
\put(98,12){\circle*{2}}
\put(98,12){\line(0,-1){4}}
\put(98,8){\circle*{2}}
\put(116,12){\circle*{2}}
\put(116,12){\line(0,-1){4}}
\put(116,8){\circle*{2}}
\put(122,12){\circle*{2}}
\put(107,16){\circle*{2}}
\multiput(107,16)(-.075630252,-.033613445){119}{\line(-1,0){.075630252}}
\put(107,16){\line(-3,-4){3}}
\put(107,16){\line(3,-4){3}}
\multiput(107,16)(.075630252,-.033613445){119}{\line(1,0){.075630252}}
\multiput(107,16)(.12605042,-.033613445){119}{\line(1,0){.12605042}}
\end{picture}

  \caption{\footnotesize Graphs $W_n$ and $S_{10}$.}\label{W-S}
\end{figure}

\begin{lem}[\cite{Hoffman-Smith-1975}]\label{subdivision}
Suppose that $G \not\cong W_n$ (see Fig.\ref{W-S}) and $uv$ is an edge on an internal path of $G$. Let $G_{uv}$ be the graph
obtained from $G$ by the subdivision of the edge $uv$ (i.e., by deleting the edge uv, adding
a new vertex $w$ and
two new edges $uw$ and $wv$). Then $\rho(G_{uv}) <\rho(G)$.
\end{lem}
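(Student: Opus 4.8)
The plan is to argue in three stages: first reduce the problem to a single one-parameter family of caterpillars; then compare consecutive members of that family through their characteristic polynomials; and finally settle the sign of the comparison with a Chebyshev identity and a subgraph bound. For the first stage I use the theorem of Liu, Li and Xie quoted in the introduction: a minimizer over $\mathbb{G}_{n,\lfloor n/2\rfloor}$ is a tree, so I may take the minimizer to be a tree $T$ on $n$ (odd) vertices with $\gamma(T)=\frac{n-1}{2}$. Invoking Lemmas \ref{subgraph}, \ref{lahuan} and \ref{subdivision} against the domination constraint, I would show that $T$ must be a caterpillar whose spine has $\frac{n+3}{2}$ vertices, each support vertex carrying exactly one pendant and the three remaining spine vertices being bare and consecutive; equivalently $T\cong T^{\frac{n+3}{2}}_{i,\frac{n-3}{2}-i}$ for some $i$. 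Here Lemma \ref{lahuan} rules out configurations that could be rebalanced into a strictly smaller radius, Lemma \ref{subdivision} forces internal edges to be subdivided as far as $\gamma=\frac{n-1}{2}$ allows (making the spine maximal), and a direct count shows the three bare vertices must form one block. This structural reduction carries the bulk of the case analysis and is the step I expect to be the main obstacle.

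For the second stage, set $p=\frac{n-3}{2}$, $m=\frac{n+3}{2}$ and $j=p-i$, and perform a Schur complement on the $p$ pendant vertices to get $f\!\left(T^{m}_{i,j},\lambda\right)=\lambda^{p}D_{i,j}(\lambda)$, where $D_{i,j}(\lambda)$ is the tridiagonal spine determinant with off-diagonal $-1$, diagonal entry $\mu:=\lambda-\tfrac1\lambda$ at the $i+j$ support positions, and diagonal entry $\lambda$ at the three bare positions. Encoding this determinant by the unimodular transfer matrices $T_c=\bigl(\begin{smallmatrix}c&-1\\1&0\end{smallmatrix}\bigr)$ one has $D_{i,j}=e_1^{\top}T_\mu^{\,j}T_\lambda^{\,3}T_\mu^{\,i}e_1$, and a short commutator computation (the commutator of $T_\lambda^{3}$ with $T_\mu$ is a scalar multiple of the swap matrix) produces the key identity
\[
D_{i+1,j-1}(\lambda)-D_{i,j}(\lambda)=-\frac{\lambda^{2}-1}{\lambda}\,\psi_{i-j}(\mu),
\]
where $\psi_k(\mu)$ is the path polynomial with $\psi_k=\mu\psi_{k-1}-\psi_{k-2}$, $\psi_0=1$, $\psi_{-1}=0$. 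Since $\rho:=\rho\!\left(T^{m}_{i,j}\right)$ is the largest root of $D_{i,j}$ and $\rho>1$, evaluating at $\lambda=\rho$ gives $D_{i+1,j-1}(\rho)=-\frac{\rho^{2}-1}{\rho}\,\psi_{i-j}(\mu(\rho))$; thus $\rho\!\left(T^{m}_{i,j}\right)<\rho\!\left(T^{m}_{i+1,j-1}\right)$ (for $i\ge j\ge 1$) will follow once $\psi_{i-j}(\mu(\rho))>0$ is established.

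For the third stage I control the sign of $\psi_{i-j}(\mu(\rho))$. The induced comb $P_i\circ K_1$ is a proper subgraph of $T^{m}_{i,j}$ and its spectral radius corresponds to $\mu=2\cos\frac{\pi}{i+1}$; hence Lemma \ref{subgraph} and the monotonicity of $\rho\mapsto\rho-\tfrac1\rho$ give $\mu(\rho)>2\cos\frac{\pi}{i+1}$, so writing $\mu(\rho)=2\cos\theta$ with $\theta\in(0,\tfrac{\pi}{2})$ one gets $\theta<\frac{\pi}{i+1}$. Because $\psi_k(\mu)=\frac{\sin((k+1)\theta)}{\sin\theta}$, positivity of $\psi_{i-j}(\mu(\rho))$ is exactly $(i-j+1)\theta<\pi$, and indeed $(i-j+1)\theta<\frac{i-j+1}{i+1}\,\pi<\pi$ whenever $j\ge 1$. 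This yields the strict chain $\rho\!\left(T^{m}_{\lceil p/2\rceil,\lfloor p/2\rfloor}\right)<\cdots<\rho\!\left(T^{m}_{p,0}\right)$, and together with the symmetry $T^{m}_{i,j}\cong T^{m}_{j,i}$ it shows that the balanced split $T^{\frac{n+3}{2}}_{\lceil\frac{n-3}{4}\rceil,\lfloor\frac{n-3}{4}\rfloor}$ is the unique minimizer within the family; combined with the first stage this proves the theorem. Since the balanced member then strictly beats $T^{\frac{n+3}{2}}_{\frac{n-3}{2},0}$, Conjecture \ref{conj} is disproved. The only delicate point in stages two and three is keeping the Perron value in the range $\mu<2$ so that $\theta$ is real, which holds because $T^{m}_{i,j}$ is a proper subgraph of $P_m\circ K_1$ and $\rho(P_m\circ K_1)<1+\sqrt2$.
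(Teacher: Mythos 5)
Your proposal does not prove the statement it was assigned. The statement is the Hoffman--Smith lemma itself: for an arbitrary graph $G\not\cong W_n$ and an edge $uv$ on an internal path, subdivision strictly decreases the spectral radius. What you have written is instead a proof plan for Theorem \ref{extremal-graph} (the identification of the minimizer $T^{\frac{n+3}{2}}_{\lceil\frac{n-3}{4}\rceil,\lfloor\frac{n-3}{4}\rfloor}$), and — fatally for the present purpose — it \emph{invokes} Lemma \ref{subdivision} as a tool in its first stage (``Lemma \ref{subdivision} forces internal edges to be subdivided as far as $\gamma=\frac{n-1}{2}$ allows''). Read as a proof of the subdivision lemma, the argument is circular. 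It is also far too narrow in scope: the lemma concerns a general connected graph, where the internal path may lie on a cycle (the definition even allows $v_1=v_k$), so the entire apparatus you build — Schur complement on pendant vertices of a caterpillar, tridiagonal spine determinants, transfer matrices $T_\mu$, $T_\lambda$, Chebyshev polynomials $\psi_k$ — cannot even be set up for the graphs the lemma covers. Note that the paper itself offers no proof either; it quotes the result from Hoffman and Smith \cite{Hoffman-Smith-1975}, so a blind proof here must stand entirely on its own.

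A genuine proof has to work at the level of the Perron eigenvector or characteristic polynomials of a general $G$. The classical route is: if $\Delta(G)\ge 3$ and $G$ has an internal path with $G\not\cong W_n$, then $\rho(G)>2$; one then compares $G$ and $G_{uv}$ using the Perron vector $x$ of $G_{uv}$ (or of $G$), exploiting the behavior of $x$ along the internal path — the entries satisfy a three-term recurrence $\rho x_{v_i}=x_{v_{i-1}}+x_{v_{i+1}}$ at the degree-two vertices, and for $\rho>2$ this forces a strict drop in the Rayleigh quotient when the extra vertex is inserted; the exceptional case $W_n$ is exactly where $\rho=2$ and subdivision leaves the radius unchanged. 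None of this eigenvector analysis appears in your proposal. Separately, your stages two and three do contain a reasonable-looking sketch for Claim \ref{claim-2}-type comparisons within the family $T^{d+1}_{i,j}$ (the paper does this by repeated use of Lemma \ref{qubian} plus Lemma \ref{compare} rather than transfer matrices), but that is an answer to a different question, and even there you again lean on Lemma \ref{subdivision} implicitly through the structural reduction.
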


\begin{lem}[\cite{Cvetkovic-Rowlinson-Simic-2010}]\label{circ}
The spectral radius of the corona $G\circ K_1$ is $\frac{1}{2}(\rho(G)+\sqrt{\rho^2(G)+4})$, where $\rho(G)$ is the spectral radius of  $G$.
\end{lem}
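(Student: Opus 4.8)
The plan is to prove Theorem \ref{extremal-graph} in two stages: first reduce an arbitrary minimizer in $\mathbb{G}_{n,\lfloor n/2\rfloor}$ to a one‑parameter family of caterpillars, and then optimize inside that family. The second stage is where the conjecture fails, and it is the heart of the argument. By the result of Liu, Li and Xie \cite{Liu-li} that the minimizer over $\mathbb{G}_{n,\lfloor n/2\rfloor}$ is a tree, I may restrict to trees $T$ on $n$ vertices with $\gamma(T)=\frac{n-1}{2}$. I would then show a minimizer tree is a caterpillar whose legs are single pendant edges and whose spine carries pendants on an initial and a terminal segment. The engine is a sequence of local surgeries that preserve both $n$ and $\gamma$ while strictly lowering $\rho$, each contradicting minimality unless $T$ already has the target shape: (i) if a spine vertex carries two or more pendants, or $\Delta(T)\ge 4$, relocate a pendant along the spine to reduce the degree, using Lemmas \ref{subgraph} and \ref{lahuan}; (ii) if a pendant path of length $\ge 2$ occurs, unbalance or straighten it via Lemma \ref{lahuan}; (iii) coalesce and shorten bare internal segments via Lemma \ref{subdivision}. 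The delicate point is that every surgery must keep $\gamma=\frac{n-1}{2}$; verifying invariance of the domination number case by case is the technical burden of this stage.

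Once $T$ is a spine with single pendants on a prefix and a suffix, leaving exactly $b$ consecutive bare spine vertices in the middle, a direct count gives $\gamma=p+\lceil (b-2)/3\rceil$ for $b\ge 2$ and $\gamma=p$ otherwise, where $p$ is the number of pendants, together with $n=2p+b$. Imposing $n$ odd and $\gamma=\frac{n-1}{2}$ forces $b\in\{1,3\}$: the family $T^{\frac{n+1}{2}}_{i,\frac{n-1}{2}-i}$ (one bare vertex) and the family $T^{\frac{n+3}{2}}_{i,\frac{n-3}{2}-i}$ (a bare $P_3$). I would discard the $b=1$ family by showing its spectral radius exceeds that of the balanced $b=3$ graph: the $b=1$ graph packs one extra pendant into the combs and shortens the bridge, which raises $\rho$. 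This comparison can be made through Lemma \ref{circ} and Lemma \ref{subgraph} applied to the two combs, or by a direct characteristic‑polynomial inequality.

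It then remains to minimize $\rho$ over the $b=3$ family; write $T_{i,j}:=T^{\frac{n+3}{2}}_{i,j}$ with $i+j=\frac{n-3}{2}$, and by the symmetry $T_{i,j}\cong T_{j,i}$ assume $i\ge j$. The conceptual point that the conjecture got wrong is that $\rho(T_{i,j})$ is controlled by the \emph{longer} of the two combs, so it is minimized by balancing the ends rather than by piling all pendants on one side; in particular the conjecture's graph $T^{\frac{n+3}{2}}_{\frac{n-3}{2},0}$ is the most unbalanced member and will turn out to have the largest, not the smallest, spectral radius. I would establish the monotonicity $\rho(T_{i,j})>\rho(T_{i-1,j+1})$ for $i>j+1$, where passing from $T_{i,j}$ to $T_{i-1,j+1}$ simply slides the bare $P_3$ block one unit toward the longer comb. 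The cleanest route is pendant elimination: from the eigenequation a pendant at a spine vertex contributes a weight $1/\rho$, so the characteristic condition reduces to the vanishing of a tridiagonal determinant $D_{d+1}(\lambda)$ on the spine with diagonal entry $\lambda$ at a bare vertex and $\lambda-\tfrac1\lambda$ at a support vertex, governed by $D_k=d_kD_{k-1}-D_{k-2}$. Comparing the two graphs amounts to translating the block of shifted diagonal entries by one position, and I would evaluate the resulting difference of determinants at $\lambda=\rho(T_{i-1,j+1})$ and show it has a definite sign.

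The main obstacle is precisely this sign analysis: extracting the sign of the determinant difference uniformly in $n$. I expect to pin it down using the Perron property, namely positivity of the eigenvector and hence of the relevant partial determinants $D_k(\rho)$ up to the index where the block sits, so that sliding the block toward the end with larger eigenvector mass strictly decreases $\rho$. Iterating this step down to $i=\lceil\frac{n-3}{4}\rceil$ identifies $T^{\frac{n+3}{2}}_{\lceil\frac{n-3}{4}\rceil,\lfloor\frac{n-3}{4}\rfloor}$ as the minimizer, and the strictness of the inequality together with the strict clauses of Lemmas \ref{subgraph}, \ref{lahuan} and \ref{subdivision} upgrades this to uniqueness, simultaneously disproving Conjecture \ref{conj}.
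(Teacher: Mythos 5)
Your proposal does not prove the statement it was asked to prove. The statement is Lemma \ref{circ}, the closed-form expression $\rho(G\circ K_1)=\frac{1}{2}\bigl(\rho(G)+\sqrt{\rho^2(G)+4}\bigr)$ for the spectral radius of the corona. What you have written instead is a strategy sketch for Theorem \ref{extremal-graph}, the paper's main result; nowhere in it is the corona formula derived. Worse, your second paragraph invokes Lemma \ref{circ} itself as a tool (``This comparison can be made through Lemma \ref{circ}\dots''), so read as a proof of Lemma \ref{circ} the argument is circular, and read as anything else it is simply off target. In the paper this lemma carries no proof at all: it is quoted from Cvetkovi\'c--Rowlinson--Simi\'c and used as a black box (in Lemma \ref{upbound}, to bound $\rho(T^*)$ by $\rho(P_{\frac{n+3}{2}}\circ K_1)<1+\sqrt{2}$), so there is no internal proof for your text to match against either.

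For the record, the missing argument is short. If $\mathbf{x}$ is the Perron eigenvector of $A(G)$ with $A(G)\mathbf{x}=\rho(G)\mathbf{x}$, define a vector on $G\circ K_1$ by keeping $x_v$ at each original vertex $v$ and putting $x_{v'}=x_v/\mu$ at the pendant vertex $v'$ attached to $v$, where $\mu>0$ is to be determined. The eigen-equation at $v'$ reads $\mu x_{v'}=x_v$, which holds automatically, and at $v$ it reads
\begin{equation*}
\mu x_v=\sum_{u\sim v}x_u+\frac{x_v}{\mu}
\qquad\Longleftrightarrow\qquad
\Bigl(\mu-\frac{1}{\mu}\Bigr)x_v=\rho(G)\,x_v ,
\end{equation*}
so $\mu^2-\rho(G)\mu-1=0$, whose positive root is $\mu=\frac{1}{2}\bigl(\rho(G)+\sqrt{\rho^2(G)+4}\bigr)$. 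Since the resulting vector is positive and a positive eigenvector of a nonnegative irreducible matrix can only belong to the Perron root, $\mu=\rho(G\circ K_1)$. (Equivalently, one uses the polynomial identity $f(G\circ K_1,x)=x^{n}f\bigl(G,\,x-\tfrac{1}{x}\bigr)$ and the monotonicity of $x\mapsto x-\tfrac1x$ on $(0,\infty)$.) Any proof along these lines would have been acceptable; a road map for the main theorem is not.
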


 \begin{lem}[\cite{Ore-1962}]\label{domination-number-upbound}
 For a graph $G$ on $n$  vertices containing no isolated vertex, $\gamma(G) \le \frac{n}{2}$.
\end{lem}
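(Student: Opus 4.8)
The plan is to prove Ore's bound by producing one dominating set whose complement is \emph{also} a dominating set. Since a vertex subset and its complement partition the $n$ vertices of $G$, whichever of the two is smaller has at most $n/2$ vertices, and this immediately bounds $\gamma(G)$ from above.

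First I would choose $D$ to be a \emph{minimal} dominating set of $G$, i.e.\ one from which no vertex can be removed while keeping the domination property; such a set exists because $V(G)$ itself is dominating and redundant vertices can be deleted one at a time. The tool driving the argument is the standard characterization of minimality: a dominating set $D$ is minimal if and only if each $v \in D$ satisfies at least one of the two conditions \textup{(a)} $v$ has no neighbor lying in $D$, or \textup{(b)} $v$ has a neighbor $w \in V(G)\setminus D$ for which $v$ is the \emph{only} neighbor of $w$ inside $D$.

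The core step is to verify that $V(G)\setminus D$ is a dominating set, i.e.\ that every $v \in D$ has a neighbor in $V(G)\setminus D$. Fix $v \in D$ and suppose to the contrary that all neighbors of $v$ lie in $D$. Since $G$ has no isolated vertex, $v$ has at least one neighbor, and by assumption that neighbor is in $D$; hence condition \textup{(a)} fails for $v$. By the characterization, condition \textup{(b)} must then hold, producing a neighbor $w$ of $v$ with $w \in V(G)\setminus D$ --- contradicting the assumption that every neighbor of $v$ lies in $D$. Thus each vertex of $D$ has a neighbor outside $D$, so $V(G)\setminus D$ dominates $D$, and since it trivially covers the remaining vertices it is indeed a dominating set.

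Finally I would combine the pieces: both $D$ and $V(G)\setminus D$ are dominating sets with $|D| + |V(G)\setminus D| = n$, whence $\gamma(G) \le \min\{|D|,\, n-|D|\} \le n/2$. The step demanding the most care is the core case analysis, where the no-isolated-vertex hypothesis is precisely what rules out condition \textup{(a)} and forces condition \textup{(b)}; I would make that single use of the hypothesis fully explicit, since it is the only place the assumption enters the argument.
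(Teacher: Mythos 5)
Your proof is correct: the existence of a minimal dominating set, the characterization of minimality via conditions (a)/(b), the verification that the complement of a minimal dominating set again dominates (with the no-isolated-vertex hypothesis used exactly once, to rule out (a)), and the final counting step are all sound. The paper itself gives no proof of this lemma---it is quoted from Ore's 1962 book---and your argument is precisely the classical complement-of-a-minimal-dominating-set proof that the citation refers to, so there is nothing to fix.
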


\begin{lem}[\cite{Liu-li}]\label{dominating-set-support}
For a tree $T$, there exists a minimum dominating set $D$ of $T$ such that all support
vertices of $T$ are in $D$.
\end{lem}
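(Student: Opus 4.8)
The plan is an extremal exchange argument. Among all minimum dominating sets of $T$, I would select one, say $D$, containing the largest possible number of support vertices; I then aim to show that such a $D$ already contains every support vertex. The entire lemma reduces to one local swap: if some support vertex is missing from $D$, I trade it in at no cost to the cardinality while strictly increasing the number of support vertices in the set, contradicting the choice of $D$.

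To set up the swap, suppose for contradiction that $u$ is a support vertex with $u \notin D$, and let $v$ be a pendant neighbor of $u$, so that $N_T(v) = \{u\}$. Since $D$ dominates $v$ while the only neighbor of $v$ is $u \notin D$, the vertex $v$ must itself lie in $D$. I would then replace $v$ by $u$, setting $D' = (D \setminus \{v\}) \cup \{u\}$, so that $|D'| = |D|$. Checking that $D'$ still dominates $T$ is immediate: the only vertices $v$ can dominate are $v$ and its unique neighbor $u$, both of which are now dominated by $u \in D'$, and every other vertex was already dominated by $D \setminus \{v\} \subseteq D'$. Hence $D'$ is again a minimum dominating set.

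The step I expect to carry the real weight is the bookkeeping on support vertices. I have added the support vertex $u$; the deleted vertex $v$ is a leaf whose unique neighbor $u$ has degree at least $2$ as soon as $T \not\cong K_2$, so $v$ is adjacent to no pendant vertex and is therefore not a support vertex. Consequently $D'$ contains strictly more support vertices than $D$, the desired contradiction, so the chosen $D$ must contain all of them. The one genuinely exceptional case is $T \cong K_2$, where both vertices are simultaneously leaves and support vertices while $\gamma(T)=1$, so that no single-vertex dominating set can contain both; this degenerate tree lies outside the range in which the lemma is applied and would simply be noted or excluded. The advantage of the extremal formulation is that it collapses all the support vertices into a single swap-and-count inequality, so I never have to order repeated exchanges or verify that repairing one support vertex does not spoil another.
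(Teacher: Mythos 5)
Your exchange argument is correct. Note that the paper itself does not prove this lemma at all---it is imported from \cite{Liu-li} with a citation---so there is no in-paper proof to diverge from; your leaf-for-support swap $D'=(D\setminus\{v\})\cup\{u\}$ is the standard argument for this folklore fact, and your extremal formulation (choosing $D$ to maximize the number of support vertices it contains) is a clean way to avoid ordering repeated exchanges or checking that one repair does not undo another. All the individual steps check out: $v\in D$ is forced because $N_T[v]=\{v,u\}$ and $u\notin D$; $D'$ dominates since $v$'s closed neighborhood is covered by $u$ and everything else was covered by $D\setminus\{v\}$ (in particular, any further leaves at $u$ are themselves in $D$ for the same forcing reason and remain in $D'$); and $v$ is a support vertex only if its unique neighbor $u$ is pendant, i.e.\ only if $T\cong K_2$, so for $T\not\cong K_2$ the swap strictly increases $|D\cap S|$, where $S$ is the set of support vertices. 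Your caveat about $K_2$ is genuine and worth flagging: there both vertices are support vertices while $\gamma(K_2)=1$, so the lemma as literally stated is false for $K_2$ and should carry the hypothesis $T\not\cong K_2$ (or $n\ge 3$). This does not affect the present paper, which invokes the lemma only for the minimizer tree $T^*$ on odd $n\ge 5$ vertices.
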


 The well-known K\"{o}nig's min-max theorem \cite{Konig1931} states that
the matching number of $G$ is equal to its covering number
 for any bipartite graph $G$.
 It is natural that
the covering number of a graph is less than  its domination number.
Therefore, if $T$ is a tree on even $n$ vertices with the domination number $\gamma(T)=\frac{n}{2}$,
note that the matching number of $T$ is at most $\frac{n}{2}$,
then
$T$ has the perfect matching. Thus, we obtain the following result immediately.
\begin{lem}\label{half-domination}
Let $T$ be a tree on even $n$ vertices with the domination number $\gamma(T)=\frac{n}{2}$.
If $D$ is a minimum dominating set of $T$, then
$V(T)\setminus D$ is also a minimum dominating set of $T$.
\end{lem}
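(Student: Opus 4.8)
The plan is to decouple the two assertions hidden in the statement — that $V(T)\setminus D$ dominates $T$, and that it has minimum size — and to dispatch the second essentially for free. Write $\bar D:=V(T)\setminus D$. Since $|D|=\gamma(T)=\frac n2$, we have $|\bar D|=n-\frac n2=\frac n2=\gamma(T)$, so the moment we know that $\bar D$ is \emph{some} dominating set it is forced to be a minimum one. Thus the entire content reduces to showing that $\bar D$ dominates $T$, and since every vertex of $\bar D$ dominates itself, it suffices to verify that each $v\in D$ has a neighbour in $\bar D$.

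To establish this I would use the minimality of $D$ through a private-neighbour argument. Fix $v\in D$. Because $D$ is a minimum, hence minimal, dominating set, $D\setminus\{v\}$ has fewer than $\gamma(T)$ vertices and so fails to dominate $T$; pick a vertex $p$ that it misses, i.e.\ $N[p]\cap(D\setminus\{v\})=\varnothing$. As $D$ itself dominates $p$, the only vertex of $D$ that can lie in $N[p]$ is $v$, whence $p\in N[v]$. If $p\ne v$ then $p\sim v$ and $p\notin D$, so $p\in\bar D$ is the required neighbour. If instead $p=v$, then $v$ has no neighbour in $D$; but $T$ is connected with $n\ge 2$, so $v$ is not isolated and its neighbour must lie in $\bar D$. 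In both cases $v$ has a neighbour in $\bar D$, which completes the reduction and hence the proof.

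The step I expect to be the crux is the implication $N[p]\cap(D\setminus\{v\})=\varnothing\Rightarrow p\in N[v]$ together with the degenerate case $p=v$, where one cannot exhibit an external private neighbour and must instead invoke the absence of isolated vertices; everything else is cardinality bookkeeping. It is worth emphasising that the perfect matching noted just before the statement is not logically needed here: the hypothesis $\gamma(T)=\frac n2$ — equality in the Ore bound of Lemma \ref{domination-number-upbound} — is used \emph{only} to guarantee $|\bar D|=\gamma(T)$, which is what upgrades the dominating set $\bar D$ to a minimum one. (If one insists on a matching-based route, one can alternatively observe that $T\cong H\circ K_1$ forces the pendant edges to be the unique perfect matching, that any minimum dominating set contains exactly one endpoint of each such edge, and that $\bar D$ then contains the other; but the private-neighbour argument above is shorter and needs no structural classification.)
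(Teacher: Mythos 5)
Your proof is correct, but it takes a genuinely different route from the paper's. The paper's argument for Lemma \ref{half-domination} is the short passage preceding it: since a tree is bipartite, K\"{o}nig's min-max theorem \cite{Konig1931} gives that the matching number equals the covering number, which is at least the domination number $\frac{n}{2}$; as the matching number is at most $\frac{n}{2}$, the tree $T$ must have a perfect matching, and the lemma is then declared to follow ``immediately'' --- the unstated bridge being essentially the matching-based alternative you sketch in your closing parenthesis, namely that the $\frac{n}{2}$ disjoint matching edges must each be met by $D$ exactly once (which is genuinely immediate only once one knows these are pendant edges, i.e.\ uses the corona structure of trees with $\gamma=\frac{n}{2}$), so that the matching partner of every vertex of $D$ lies in $V(T)\setminus D$. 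Your argument instead is Ore's classical observation that in any graph without isolated vertices the complement of a \emph{minimal} dominating set is again a dominating set: for $v\in D$ you extract $p$ with $N[p]\cap(D\setminus\{v\})=\varnothing$, and the two cases $p\neq v$ (a private neighbour of $v$ outside $D$) and $p=v$ (no neighbour of $v$ in $D$, so any neighbour works) both produce a neighbour of $v$ in $\bar D$; this case analysis checks out, and the hypothesis $\gamma(T)=\frac{n}{2}$ enters only through the cardinality bookkeeping $|\bar D|=n-\frac{n}{2}=\gamma(T)$, exactly as you say. What each buys: your route is more elementary and strictly more general --- it uses neither bipartiteness, nor treeness, nor K\"{o}nig's theorem, only connectivity (absence of isolated vertices), and in fact it is more self-contained than the paper's, whose ``immediately'' silently leans on structural information beyond the bare existence of a perfect matching; the paper's route, on the other hand, records along the way the structurally useful fact that such trees have a perfect matching, which is conceptually natural here since trees with $\gamma(T)=\frac{n}{2}$ are precisely coronas.
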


\begin{lem}[\cite{Liu-li}]\label{min-tree}
Let $G$ be the minimizer graph over all graphs in $ \mathbb{G}_{n,\gamma}$. Then $G$ must be a tree.
\end{lem}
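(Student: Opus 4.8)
The plan is to prove Theorem \ref{extremal-graph} by first reducing the search space to a very restricted family of caterpillar trees, and then carrying out a delicate comparison inside that family to locate the unique minimizer. By Lemma \ref{min-tree}, the minimizer graph $G$ over $\mathbb{G}_{n,\lfloor\frac{n}{2}\rfloor}$ must be a tree, so throughout we assume $G$ is a tree $T$ on odd $n$ vertices with $\gamma(T)=\lfloor\frac{n}{2}\rfloor=\frac{n-1}{2}$. The first main step is to show that $T$ must in fact be of the form $T^{\frac{n+3}{2}}_{i,j}$ for suitable $i,j$ with $i+j=\frac{n-3}{2}$; that is, $T$ is a path with pendant vertices attached at one end-block and at the other end-block, and nowhere in between. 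To establish this I would argue structurally about the minimizer. First, using Lemma \ref{dominating-set-support}, fix a minimum dominating set $D$ containing all support vertices. A counting argument with $\gamma(T)=\frac{n-1}{2}$ forces the tree to be ``almost'' a corona: all but one vertex can be paired off, so the structure is extremely tight and every non-pendant vertex is either a support vertex or lies on a short internal path. Then I would use Lemma \ref{lahuan} (the path-shifting lemma of Li and Feng) to show that any two pendant paths hanging off the same vertex can be rebalanced to strictly decrease the spectral radius unless they have near-equal lengths; this forces all pendant attachments to be single pendant vertices (length-one paths) rather than longer branches. Finally, Lemma \ref{subdivision} (Hoffman--Smith) applied to edges on internal paths shows that subdividing, i.e.\ lengthening the ``bare'' portion of the spine while the total vertex count is fixed, lowers $\rho$; combined with the domination constraint this pins the spine length to exactly $\frac{n+3}{2}$ and forces the pendant vertices to cluster at the two ends, yielding precisely the family $T^{\frac{n+3}{2}}_{i,j}$.

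The second main step is to minimize $\rho(T^{\frac{n+3}{2}}_{i,j})$ over the admissible range $i\in[\lceil\frac{n-3}{4}\rceil,\frac{n-3}{2}]$, $j=\frac{n-3}{2}-i$. Here the key is a direct comparison between $T^{\frac{n+3}{2}}_{i,j}$ and $T^{\frac{n+3}{2}}_{i+1,j-1}$ for each $i$. I would compute the characteristic polynomials using the standard deletion recurrence $f(G,x)=f(G-v,x)-\sum_{u\sim v}f(G-u-v,x)-2\sum_{C\ni v}f(G-C,x)$ for the adjacency matrix (the cycle term vanishes since $T$ is a tree), applied at a well-chosen pendant or support vertex near the boundary between the ``decorated'' and ``bare'' segments. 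This reduces $f(T^{\frac{n+3}{2}}_{i,j},x)$ to combinations of characteristic polynomials of paths and of the corona-like end segments, whose behavior is controlled by Lemma \ref{circ}. The goal is to show that the difference $f(T^{\frac{n+3}{2}}_{i,j},x)-f(T^{\frac{n+3}{2}}_{i+1,j-1},x)$, evaluated near the common spectral radius, has a definite sign, so that $\rho$ strictly decreases as $(i,j)$ moves toward balance, i.e.\ as $|i-j|$ decreases. This would establish that the minimum is attained exactly at the most balanced split $i=\lceil\frac{n-3}{4}\rceil$, $j=\lfloor\frac{n-3}{4}\rfloor$, which is the claimed graph $T^{\frac{n+3}{2}}_{\lceil\frac{n-3}{4}\rceil,\lfloor\frac{n-3}{4}\rfloor}$, and that the disproved candidate $T^{\frac{n+3}{2}}_{\frac{n-3}{2},0}$ of Conjecture \ref{conj} (the fully unbalanced split) is strictly worse.

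The hardest part, and the crux of the whole argument, is the sign determination in the second step. Monotonicity in ``balance'' for decorated paths is intuitively plausible by analogy with Lemma \ref{lahuan}, but the pendant vertices break the clean path structure that Lemma \ref{lahuan} and Lemma \ref{subdivision} require, so those lemmas cannot be applied as black boxes to the spine; the attached pendants change the local eigenvector weights in a way that is not monotone a priori. I anticipate needing an explicit recurrence for the sequence $\phi_k(x):=f(T^{\frac{n+3}{2}}_{i,j},x)$ as a function of the split, showing it satisfies a linear recursion whose characteristic behavior I can control for $x\ge\rho$, and then using Perron--Frobenius positivity of the principal eigenvector to fix the sign. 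An alternative I would keep in reserve is a direct eigenvector-perturbation / Rayleigh-quotient argument: take the Perron eigenvector of the balanced tree and exhibit an explicit rebalancing transformation showing that moving toward balance is a strict improvement, using the symmetry of the balanced graph to simplify the quadratic form. One of these two routes should close the gap and deliver the strict inequality together with the uniqueness of the extremal graph.
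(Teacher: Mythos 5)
There is a fundamental mismatch: the statement you were asked to prove is Lemma \ref{min-tree} (the minimizer graph in $\mathbb{G}_{n,\gamma}$ must be a tree), but your proposal never addresses it. What you sketch is a proof strategy for Theorem \ref{extremal-graph}, and your very first step is ``By Lemma \ref{min-tree}, the minimizer graph $G$ \dots must be a tree'' --- that is, you invoke the assigned statement as a hypothesis rather than proving it. Relative to the target, the argument is circular and contains no content bearing on why a minimizer cannot contain a cycle; nothing in your two main steps (reduction to the caterpillar family $T^{\frac{n+3}{2}}_{i,j}$, then the characteristic-polynomial comparison across splits) touches the acyclicity question. Note also that the paper itself does not reprove this lemma: it is quoted from \cite{Liu-li}, so the proof you were meant to reconstruct is the one in that reference.

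For the record, the actual argument is short and quite different in flavor from anything in your proposal. Suppose the minimizer $G$ contains a cycle. Let $D$ be a minimum dominating set of $G$, and for each vertex $v\in V(G)\setminus D$ choose one edge joining $v$ to a neighbor in $D$; these chosen edges form a forest, which can be extended to a spanning tree $T$ of $G$. Then $D$ still dominates $T$, so $\gamma(T)\le |D|=\gamma(G)$; conversely, since $E(T)\subseteq E(G)$, every dominating set of $T$ dominates $G$, so $\gamma(T)=\gamma(G)$ and $T\in \mathbb{G}_{n,\gamma}$. Since $G$ has a cycle, $T$ is a \emph{proper} spanning subgraph of the connected graph $G$, and Lemma \ref{subgraph} gives $\rho(T)<\rho(G)$, contradicting the minimality of $G$. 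The one subtlety --- and the idea missing from your proposal --- is that deleting edges can in general \emph{increase} the domination number, so one cannot pass to an arbitrary spanning tree; the spanning tree must be built around a fixed minimum dominating set so that domination is preserved. Your balance/rebalancing machinery, whatever its merits for Claim \ref{claim-2} of the paper, is simply not an answer to this question.
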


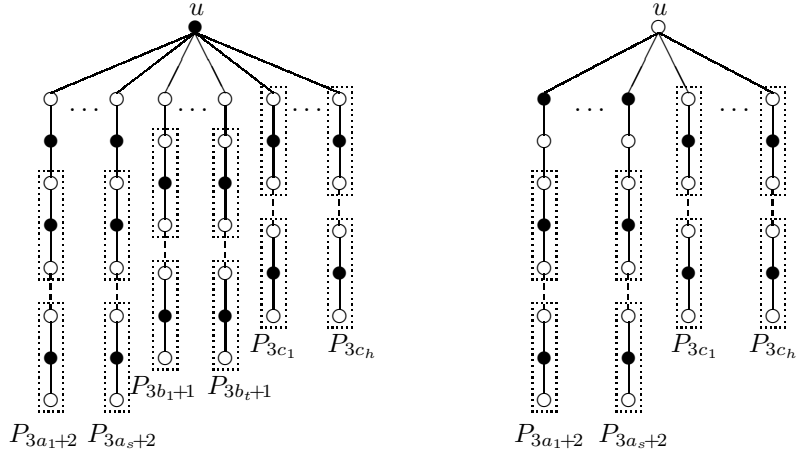
\begin{figure}
  \centering
  \footnotesize
\unitlength 0.8mm 
\linethickness{0.4pt}
\ifx\plotpoint\undefined\newsavebox{\plotpoint}\fi 
\begin{picture}(129,71)(0,0)
\put(31,69){\circle*{2}}
\put(18,57){\circle{2}}
\put(18,56){\line(0,-1){5}}
\put(18,50){\circle*{2}}
\put(18,49){\line(0,-1){5}}
\put(18,43){\circle{2}}
\put(18,42){\line(0,-1){5}}
\put(18,36){\circle*{2}}
\put(18,35){\line(0,-1){5}}
\put(18,29){\circle{2}}
\put(17.93,27.93){\line(0,-1){.8571}}
\put(17.93,26.215){\line(0,-1){.8571}}
\put(17.93,24.501){\line(0,-1){.8571}}
\put(17.93,22.787){\line(0,-1){.8571}}
\put(18,21){\circle{2}}
\put(18,20){\line(0,-1){5}}
\put(18,14){\circle*{2}}
\put(18,13){\line(0,-1){5}}
\put(18,7){\circle{2}}
\multiput(15.93,44.93)(0,-.94737){20}{{\rule{.4pt}{.4pt}}}
\multiput(15.93,26.93)(.8,0){6}{{\rule{.4pt}{.4pt}}}
\multiput(15.93,44.93)(.8,0){6}{{\rule{.4pt}{.4pt}}}
\multiput(19.93,44.93)(0,-.94737){20}{{\rule{.4pt}{.4pt}}}
\multiput(15.93,22.93)(0,-.94737){20}{{\rule{.4pt}{.4pt}}}
\multiput(15.93,4.93)(.8,0){6}{{\rule{.4pt}{.4pt}}}
\multiput(15.93,22.93)(.8,0){6}{{\rule{.4pt}{.4pt}}}
\multiput(19.93,22.93)(0,-.94737){20}{{\rule{.4pt}{.4pt}}}
\put(44,57){\circle{2}}
\put(44,56){\line(0,-1){5}}
\put(44,50){\circle*{2}}
\put(44,49){\line(0,-1){5}}
\put(44,43){\circle{2}}
\put(43.93,41.93){\line(0,-1){.8571}}
\put(43.93,40.215){\line(0,-1){.8571}}
\put(43.93,38.501){\line(0,-1){.8571}}
\put(43.93,36.787){\line(0,-1){.8571}}
\put(44,35){\circle{2}}
\put(44,34){\line(0,-1){5}}
\put(44,28){\circle*{2}}
\put(44,27){\line(0,-1){5}}
\put(44,21){\circle{2}}
\multiput(41.93,58.93)(0,-.94737){20}{{\rule{.4pt}{.4pt}}}
\multiput(41.93,40.93)(.8,0){6}{{\rule{.4pt}{.4pt}}}
\multiput(41.93,58.93)(.8,0){6}{{\rule{.4pt}{.4pt}}}
\multiput(45.93,58.93)(0,-.94737){20}{{\rule{.4pt}{.4pt}}}
\multiput(41.93,36.93)(0,-.94737){20}{{\rule{.4pt}{.4pt}}}
\multiput(41.93,18.93)(.8,0){6}{{\rule{.4pt}{.4pt}}}
\multiput(41.93,36.93)(.8,0){6}{{\rule{.4pt}{.4pt}}}
\multiput(45.93,36.93)(0,-.94737){20}{{\rule{.4pt}{.4pt}}}
\put(26,57){\circle{2}}
\put(26,56){\line(0,-1){5}}
\put(26,50){\circle{2}}
\put(26,49){\line(0,-1){5}}
\put(26,43){\circle*{2}}
\put(26,42){\line(0,-1){5}}
\put(26,36){\circle{2}}
\put(25.93,34.93){\line(0,-1){.8571}}
\put(25.93,33.215){\line(0,-1){.8571}}
\put(25.93,31.501){\line(0,-1){.8571}}
\put(25.93,29.787){\line(0,-1){.8571}}
\put(26,28){\circle{2}}
\put(26,27){\line(0,-1){5}}
\put(26,21){\circle*{2}}
\put(26,20){\line(0,-1){5}}
\put(26,14){\circle{2}}
\multiput(23.93,51.93)(0,-.94737){20}{{\rule{.4pt}{.4pt}}}
\multiput(23.93,33.93)(.8,0){6}{{\rule{.4pt}{.4pt}}}
\multiput(23.93,51.93)(.8,0){6}{{\rule{.4pt}{.4pt}}}
\multiput(27.93,51.93)(0,-.94737){20}{{\rule{.4pt}{.4pt}}}
\multiput(23.93,29.93)(0,-.94737){20}{{\rule{.4pt}{.4pt}}}
\multiput(23.93,11.93)(.8,0){6}{{\rule{.4pt}{.4pt}}}
\multiput(23.93,29.93)(.8,0){6}{{\rule{.4pt}{.4pt}}}
\multiput(27.93,29.93)(0,-.94737){20}{{\rule{.4pt}{.4pt}}}
\put(36,57){\circle{2}}
\put(36,56){\line(0,-1){5}}
\put(36,50){\circle{2}}
\put(36,49){\line(0,-1){5}}
\put(36,43){\circle*{2}}
\put(36,42){\line(0,-1){5}}
\put(36,36){\circle{2}}
\put(35.93,34.93){\line(0,-1){.8571}}
\put(35.93,33.215){\line(0,-1){.8571}}
\put(35.93,31.501){\line(0,-1){.8571}}
\put(35.93,29.787){\line(0,-1){.8571}}
\put(36,28){\circle{2}}
\put(36,27){\line(0,-1){5}}
\put(36,21){\circle*{2}}
\put(36,20){\line(0,-1){5}}
\put(36,14){\circle{2}}
\multiput(33.93,51.93)(0,-.94737){20}{{\rule{.4pt}{.4pt}}}
\multiput(33.93,33.93)(.8,0){6}{{\rule{.4pt}{.4pt}}}
\multiput(33.93,51.93)(.8,0){6}{{\rule{.4pt}{.4pt}}}
\multiput(37.93,51.93)(0,-.94737){20}{{\rule{.4pt}{.4pt}}}
\multiput(33.93,29.93)(0,-.94737){20}{{\rule{.4pt}{.4pt}}}
\multiput(33.93,11.93)(.8,0){6}{{\rule{.4pt}{.4pt}}}
\multiput(33.93,29.93)(.8,0){6}{{\rule{.4pt}{.4pt}}}
\multiput(37.93,29.93)(0,-.94737){20}{{\rule{.4pt}{.4pt}}}
\put(7,57){\circle{2}}
\put(7,56){\line(0,-1){5}}
\put(7,50){\circle*{2}}
\put(7,49){\line(0,-1){5}}
\put(7,43){\circle{2}}
\put(7,42){\line(0,-1){5}}
\put(7,36){\circle*{2}}
\put(7,35){\line(0,-1){5}}
\put(7,29){\circle{2}}
\put(6.93,27.93){\line(0,-1){.8571}}
\put(6.93,26.215){\line(0,-1){.8571}}
\put(6.93,24.501){\line(0,-1){.8571}}
\put(6.93,22.787){\line(0,-1){.8571}}
\put(7,21){\circle{2}}
\put(7,20){\line(0,-1){5}}
\put(7,14){\circle*{2}}
\put(7,13){\line(0,-1){5}}
\put(7,7){\circle{2}}
\multiput(4.93,44.93)(0,-.94737){20}{{\rule{.4pt}{.4pt}}}
\multiput(4.93,26.93)(.8,0){6}{{\rule{.4pt}{.4pt}}}
\multiput(4.93,44.93)(.8,0){6}{{\rule{.4pt}{.4pt}}}
\multiput(8.93,44.93)(0,-.94737){20}{{\rule{.4pt}{.4pt}}}
\multiput(4.93,22.93)(0,-.94737){20}{{\rule{.4pt}{.4pt}}}
\multiput(4.93,4.93)(.8,0){6}{{\rule{.4pt}{.4pt}}}
\multiput(4.93,22.93)(.8,0){6}{{\rule{.4pt}{.4pt}}}
\multiput(8.93,22.93)(0,-.94737){20}{{\rule{.4pt}{.4pt}}}
\put(55,57){\circle{2}}
\put(55,56){\line(0,-1){5}}
\put(55,50){\circle*{2}}
\put(55,49){\line(0,-1){5}}
\put(55,43){\circle{2}}
\put(54.93,41.93){\line(0,-1){.8571}}
\put(54.93,40.215){\line(0,-1){.8571}}
\put(54.93,38.501){\line(0,-1){.8571}}
\put(54.93,36.787){\line(0,-1){.8571}}
\put(55,35){\circle{2}}
\put(55,34){\line(0,-1){5}}
\put(55,28){\circle*{2}}
\put(55,27){\line(0,-1){5}}
\put(55,21){\circle{2}}
\multiput(52.93,58.93)(0,-.94737){20}{{\rule{.4pt}{.4pt}}}
\multiput(52.93,40.93)(.8,0){6}{{\rule{.4pt}{.4pt}}}
\multiput(52.93,58.93)(.8,0){6}{{\rule{.4pt}{.4pt}}}
\multiput(56.93,58.93)(0,-.94737){20}{{\rule{.4pt}{.4pt}}}
\multiput(52.93,36.93)(0,-.94737){20}{{\rule{.4pt}{.4pt}}}
\multiput(52.93,18.93)(.8,0){6}{{\rule{.4pt}{.4pt}}}
\multiput(52.93,36.93)(.8,0){6}{{\rule{.4pt}{.4pt}}}
\multiput(56.93,36.93)(0,-.94737){20}{{\rule{.4pt}{.4pt}}}
\multiput(31,68)(-.0808080808,-.0336700337){297}{\line(-1,0){.0808080808}}
\multiput(31,68)(-.0437710438,-.0336700337){297}{\line(-1,0){.0437710438}}
\put(31,68){\line(-1,-2){5}}
\put(31,68){\line(1,-2){5}}
\multiput(31,68)(.0437710438,-.0336700337){297}{\line(1,0){.0437710438}}
\multiput(31,68)(.0808080808,-.0336700337){297}{\line(1,0){.0808080808}}
\put(108,69){\circle{2}}
\put(103,56){\line(0,-1){5}}
\put(103,49){\line(0,-1){5}}
\put(103,43){\circle{2}}
\put(103,42){\line(0,-1){5}}
\put(103,36){\circle*{2}}
\put(103,35){\line(0,-1){5}}
\put(103,29){\circle{2}}
\put(102.93,27.93){\line(0,-1){.8571}}
\put(102.93,26.215){\line(0,-1){.8571}}
\put(102.93,24.501){\line(0,-1){.8571}}
\put(102.93,22.787){\line(0,-1){.8571}}
\put(103,21){\circle{2}}
\put(103,20){\line(0,-1){5}}
\put(103,14){\circle*{2}}
\put(103,13){\line(0,-1){5}}
\put(103,7){\circle{2}}
\multiput(100.93,44.93)(0,-.94737){20}{{\rule{.4pt}{.4pt}}}
\multiput(100.93,26.93)(.8,0){6}{{\rule{.4pt}{.4pt}}}
\multiput(100.93,44.93)(.8,0){6}{{\rule{.4pt}{.4pt}}}
\multiput(104.93,44.93)(0,-.94737){20}{{\rule{.4pt}{.4pt}}}
\multiput(100.93,22.93)(0,-.94737){20}{{\rule{.4pt}{.4pt}}}
\multiput(100.93,4.93)(.8,0){6}{{\rule{.4pt}{.4pt}}}
\multiput(100.93,22.93)(.8,0){6}{{\rule{.4pt}{.4pt}}}
\multiput(104.93,22.93)(0,-.94737){20}{{\rule{.4pt}{.4pt}}}
\put(89,56){\line(0,-1){5}}
\put(89,49){\line(0,-1){5}}
\put(89,43){\circle{2}}
\put(89,42){\line(0,-1){5}}
\put(89,36){\circle*{2}}
\put(89,35){\line(0,-1){5}}
\put(89,29){\circle{2}}
\put(88.93,27.93){\line(0,-1){.8571}}
\put(88.93,26.215){\line(0,-1){.8571}}
\put(88.93,24.501){\line(0,-1){.8571}}
\put(88.93,22.787){\line(0,-1){.8571}}
\put(89,21){\circle{2}}
\put(89,20){\line(0,-1){5}}
\put(89,14){\circle*{2}}
\put(89,13){\line(0,-1){5}}
\put(89,7){\circle{2}}
\multiput(86.93,44.93)(0,-.94737){20}{{\rule{.4pt}{.4pt}}}
\multiput(86.93,26.93)(.8,0){6}{{\rule{.4pt}{.4pt}}}
\multiput(86.93,44.93)(.8,0){6}{{\rule{.4pt}{.4pt}}}
\multiput(90.93,44.93)(0,-.94737){20}{{\rule{.4pt}{.4pt}}}
\multiput(86.93,22.93)(0,-.94737){20}{{\rule{.4pt}{.4pt}}}
\multiput(86.93,4.93)(.8,0){6}{{\rule{.4pt}{.4pt}}}
\multiput(86.93,22.93)(.8,0){6}{{\rule{.4pt}{.4pt}}}
\multiput(90.93,22.93)(0,-.94737){20}{{\rule{.4pt}{.4pt}}}
\put(108,68){\line(-1,-2){5}}
\multiput(108,68)(-.063973064,-.0336700337){297}{\line(-1,0){.063973064}}
\put(108,68){\line(1,-2){5}}
\multiput(108,68)(.063973064,-.0336700337){297}{\line(1,0){.063973064}}
\put(113,57){\circle{2}}
\put(113,56){\line(0,-1){5}}
\put(113,50){\circle*{2}}
\put(113,49){\line(0,-1){5}}
\put(113,43){\circle{2}}
\put(112.93,41.93){\line(0,-1){.8571}}
\put(112.93,40.215){\line(0,-1){.8571}}
\put(112.93,38.501){\line(0,-1){.8571}}
\put(112.93,36.787){\line(0,-1){.8571}}
\put(113,35){\circle{2}}
\put(113,34){\line(0,-1){5}}
\put(113,28){\circle*{2}}
\put(113,27){\line(0,-1){5}}
\put(113,21){\circle{2}}
\multiput(110.93,58.93)(0,-.94737){20}{{\rule{.4pt}{.4pt}}}
\multiput(110.93,40.93)(.8,0){6}{{\rule{.4pt}{.4pt}}}
\multiput(110.93,58.93)(.8,0){6}{{\rule{.4pt}{.4pt}}}
\multiput(114.93,58.93)(0,-.94737){20}{{\rule{.4pt}{.4pt}}}
\multiput(110.93,36.93)(0,-.94737){20}{{\rule{.4pt}{.4pt}}}
\multiput(110.93,18.93)(.8,0){6}{{\rule{.4pt}{.4pt}}}
\multiput(110.93,36.93)(.8,0){6}{{\rule{.4pt}{.4pt}}}
\multiput(114.93,36.93)(0,-.94737){20}{{\rule{.4pt}{.4pt}}}
\put(127,57){\circle{2}}
\put(127,56){\line(0,-1){5}}
\put(127,50){\circle*{2}}
\put(127,49){\line(0,-1){5}}
\put(127,43){\circle{2}}
\put(126.93,41.93){\line(0,-1){.8571}}
\put(126.93,40.215){\line(0,-1){.8571}}
\put(126.93,38.501){\line(0,-1){.8571}}
\put(126.93,36.787){\line(0,-1){.8571}}
\put(127,35){\circle{2}}
\put(127,34){\line(0,-1){5}}
\put(127,28){\circle*{2}}
\put(127,27){\line(0,-1){5}}
\put(127,21){\circle{2}}
\multiput(124.93,58.93)(0,-.94737){20}{{\rule{.4pt}{.4pt}}}
\multiput(124.93,40.93)(.8,0){6}{{\rule{.4pt}{.4pt}}}
\multiput(124.93,58.93)(.8,0){6}{{\rule{.4pt}{.4pt}}}
\multiput(128.93,58.93)(0,-.94737){20}{{\rule{.4pt}{.4pt}}}
\multiput(124.93,36.93)(0,-.94737){20}{{\rule{.4pt}{.4pt}}}
\multiput(124.93,18.93)(.8,0){6}{{\rule{.4pt}{.4pt}}}
\multiput(124.93,36.93)(.8,0){6}{{\rule{.4pt}{.4pt}}}
\multiput(128.93,36.93)(0,-.94737){20}{{\rule{.4pt}{.4pt}}}
\put(89,57){\circle*{2}}
\put(103,57){\circle*{2}}
\put(89,50){\circle{2}}
\put(103,50){\circle{2}}
\put(0,0){$P_{3a_1\!+\!2}$}
\put(13,0){$P_{3a_s\!+\!2}$}
\put(20,8){$P_{3b_1\!+\!1}$}
\put(33,8){$P_{3b_t\!+\!1}$}
\put(40,15){$P_{3c_1}$}
\put(53,15){$P_{3c_h}$}
\put(84,0){$P_{3a_1\!+\!2}$}
\put(98,0){$P_{3a_s\!+\!2}$}
\put(110,15){$P_{3c_1}$}
\put(123,15){$P_{3c_h}$}
\put(118,55){$\ldots$}
\put(94,55){$\ldots$}
\put(10,55){$\ldots$}
\put(28,55){$\ldots$}
\put(47,55){$\ldots$}
\put(30,71){$u$}
\put(107,71){$u$}
\end{picture}

  \caption{\footnotesize The starlike tree with the minimum dominating set. }\label{starlike}
\end{figure}

It is clear that the following result holds by employing  Fig.\ref{starlike}.

\begin{lem}\label{starlike-tree-domination-number}
Let $T$ be a starlike tree on $n$ vertices with $T-u=
P_{3a_1+2}\cup \cdots \cup P_{3a_s+2}
\cup P_{3b_1+1} \cup \cdots \cup P_{3b_t+1}
\cup P_{3c_1}\cup \cdots \cup P_{3c_h}
$, where $u$ is the center vertex of $T$ and $s,t,h\ge0$.
Then
$$\gamma(T)=
\left\{\begin{array}{ll}
\frac{n+s-t+2}{3},& \mbox{if $t \ge 1$,}\\
\frac{n+s-1}{3},& \mbox{if $t=0$.}
\end{array}
\right.$$
\end{lem}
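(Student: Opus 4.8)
The plan is to compute $\gamma(T)$ exactly by conditioning on whether the center $u$ lies in a minimum dominating set, exploiting that the arms of $T$ are pairwise vertex-disjoint once $u$ is removed, so that their contributions to a dominating set decouple as soon as the status of $u$ is fixed. Writing $A=\sum_{i}a_i$, $B=\sum_{j}b_j$, $C=\sum_{k}c_k$, a vertex count gives $n=1+3(A+B+C)+2s+t$, hence $A+B+C=\tfrac{1}{3}(n-1-2s-t)$; this is the identity I will use to convert the per-arm counts into the stated closed forms. Throughout I will use the elementary fact $\gamma(P_m)=\lceil m/3\rceil$ recalled in the introduction.

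First I would treat the case $u\in D$. Then $u$ dominates itself and the neighbour $v_1$ of $u$ on every arm, so on an arm $P_m=v_1\cdots v_m$ it remains to dominate $v_2,\dots,v_m$ (a copy of $P_{m-1}$, with $v_1$ available only as a helper that can cover $v_2$). A short computation with $\gamma(P_m)=\lceil m/3\rceil$ shows the minimum cost on such an arm is $a+1$, $b$, $c$ for the three types $P_{3a+2}$, $P_{3b+1}$, $P_{3c}$ respectively. Summing and adding $1$ for $u$ gives $\gamma_A:=1+A+s+B+C=\tfrac{1}{3}(n+s-t+2)$.

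Next I would treat the case $u\notin D$. Now every arm must be dominated internally, at cost $\beta(m)=\lceil m/3\rceil$, namely $a+1$, $b+1$, $c$ for the three types; moreover at least one arm must contain $v_1$ in $D$, so that $u$ is dominated. Forcing $v_1\in D$ costs the same $\lceil m/3\rceil$ on a $P_{3a+2}$ or $P_{3b+1}$ arm but one extra on a $P_{3c}$ arm, so provided such a $P_{3a+2}$ or $P_{3b+1}$ arm is present the total is $\gamma_B:=A+s+B+t+C=\tfrac{1}{3}(n-1+s+2t)$. Comparing, $\gamma_B-\gamma_A=t-1$: when $t\ge 1$ the minimum is $\gamma_A=\tfrac{1}{3}(n+s-t+2)$, and when $t=0$ (so that, as in Fig.\ref{starlike}, a $P_{3a+2}$ arm is used to dominate $u$) the minimum is $\gamma_B=\tfrac{1}{3}(n+s-1)$, which is exactly the claimed formula.

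The main obstacle is the lower bound. Once the status of $u$ is fixed, $T-u$ is a disjoint union of paths, and a minimum dominating set of $T$ restricts to an admissible domination of each arm, so the total cost is additive and bounded below by the per-arm minima above. The one delicate point is showing that insisting $v_1\in D$ on the arm that dominates $u$ is free for a $P_{3a+2}$ or $P_{3b+1}$ arm but costs one extra on a $P_{3c}$ arm; this follows from a counting bound, since $v_1$ dominates only the two vertices $v_1,v_2$ while every other vertex dominates at most three, a dominating set of $P_{3c}$ containing $v_1$ and having $k$ vertices covers at most $2+3(k-1)$ vertices, forcing $k\ge c+1$. It is exactly this that compels an optimal configuration with $t=0$ to route the domination of $u$ through a $P_{3a+2}$ arm, as drawn in Fig.\ref{starlike}, and that pins down the case distinction in the statement.
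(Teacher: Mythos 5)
Your argument is correct on the lemma's actual range of validity, and it supplies something the paper does not: the paper's entire ``proof'' of Lemma~\ref{starlike-tree-domination-number} is the one-line appeal ``it is clear \ldots by employing Fig.~\ref{starlike}'', i.e.\ the figure exhibits the intended dominating sets (with $u\in D$ when $t\ge 1$, and $u$ dominated through the first vertex of a $P_{3a_1+2}$ arm when $t=0$) but gives no optimality argument at all. Your proof follows exactly the case split that the figure illustrates ($u\in D$ versus $u\notin D$, with additive per-arm costs once the status of $u$ is fixed), and adds the missing half: the restriction-to-arms lower bounds, the per-arm minima $a+1$, $b$, $c$ (resp.\ $a+1$, $b+1$, $c$), and the counting argument showing that forcing $v_1\in D$ on a $P_{3c}$ arm costs one extra vertex while it is free on the other two types. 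All of these computations check out, as does the comparison $\gamma_B-\gamma_A=t-1$.

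One point must be made explicit, and it is a defect of the lemma as stated rather than of your reasoning. Your value $\gamma_B=\frac{1}{3}(n-1+s+2t)$ for the case $u\notin D$ is derived ``provided a $P_{3a+2}$ or $P_{3b+1}$ arm is present'', and your conclusion for $t=0$ routes the domination of $u$ through a $P_{3a+2}$ arm, i.e.\ it tacitly assumes $s\ge 1$. If $s=t=0$ and $h\ge 1$, your own analysis shows that both cases cost $\frac{n+2}{3}$ (Case 1 gives $1+C$, Case 2 gives $C+1$, with $C=\frac{n-1}{3}$), so $\gamma(T)=\frac{n+2}{3}$, whereas the lemma asserts $\frac{n-1}{3}$; the lemma is simply false there. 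Concretely, for the spider with three legs $P_3$ one has $n=10$ and $\gamma=4$, not $3$: a $3$-set would need closed neighborhoods of sizes $4+3+3$ pairwise disjoint, which is impossible since every size-$3$ closed neighborhood meets $N[u]$. The paper's Fig.~\ref{starlike} hides the same assumption, since its $t=0$ picture is drawn with $P_{3a_i+2}$ arms present; and the only place the lemma is invoked, Lemma~\ref{branching-vertex}, survives the correction because $\frac{n+2}{3}\le\frac{n+3}{3}<\frac{n-1}{2}$ still holds for $n\ge 13$. So you should either state the hypothesis $s\ge 1$ when $t=0$, or add the third case $\gamma(T)=\frac{n+2}{3}$ when $s=t=0$; with that amendment your proof is complete and is the rigorous version of what the paper merely asserts.
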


\begin{lem}[\cite{Cvetkovic-Rowlinson-Simic-2010}]\label{qubian}
For any edge $uv$ of a tree $T$,
we have $f(T,x)=f(T-uv,x)-f(T-u-v,x)$.
\end{lem}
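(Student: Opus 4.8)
The plan is to pass first to trees, then to an explicit two-parameter caterpillar family, and finally to run a balancing argument inside that family. By Lemma \ref{min-tree} any $\rho$-minimizer $G$ over $\mathbb{G}_{n,\lfloor\frac{n}{2}\rfloor}$ is a tree $T$, and by Lemma \ref{domination-number-upbound} the prescribed value $\gamma(T)=\lfloor\frac{n}{2}\rfloor=\frac{n-1}{2}$ is the \emph{largest} domination number an $n$-vertex connected graph can have (with $n$ odd). So the problem reduces to: among all $n$-vertex trees of maximum domination number, find the unique minimizer of $\rho$, and verify it is $T^{(n+3)/2}_{\lceil (n-3)/4\rceil,\lfloor (n-3)/4\rfloor}$ rather than the conjectured broom $T^{(n+3)/2}_{(n-3)/2,0}$.

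I would first pin down the coarse shape of $T$. Since a pendant raises $\rho$ more when it hangs where the Perron eigenvector is large, an interior pendant ought to be slid toward a spine end, a vertex carrying several pendants ought to be thinned out, and a too-short bare segment ought to be lengthened. These moves are exactly what Lemma \ref{lahuan} (balancing two attached branches lowers $\rho$) and Lemma \ref{subdivision} (subdividing an internal-path edge lowers $\rho$) control, with Lemma \ref{subgraph} handling degenerate cases; together they should force $\Delta(T)\le 3$ and, in fact, that $T$ is a caterpillar whose pendants sit in two end-brooms. The point requiring care is that \emph{every} such move must preserve $\gamma(T)=\frac{n-1}{2}$, which is where the domination bookkeeping enters: using Lemma \ref{dominating-set-support} (a minimum dominating set may be taken to contain all support vertices) together with a caterpillar domination count in the spirit of Lemma \ref{starlike-tree-domination-number}, I would show that realizing $\gamma=\frac{n-1}{2}$ with a caterpillar forces exactly $\frac{n-3}{2}$ pendants, a spine of $\frac{n+3}{2}$ vertices, and the pendants clustered into two end-brooms separated by a bare path of exactly three spine vertices. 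That is, $T\cong T^{(n+3)/2}_{i,\frac{n-3}{2}-i}$ for some $i\in[\lceil\frac{n-3}{4}\rceil,\frac{n-3}{2}]$, precisely the family in Figure \ref{T}; Lemmas \ref{half-domination} and \ref{circ} serve as sanity checks, the latter fixing $\rho$ on the pure-broom pieces.

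The heart of the matter, and the step that overturns Conjecture \ref{conj}, is to minimize over the split $i$. Write $T_i:=T^{(n+3)/2}_{i,\frac{n-3}{2}-i}$ and $\rho_i:=\rho(T_i)$; the admissible range $i\in[\lceil\frac{n-3}{4}\rceil,\frac{n-3}{2}]$ already lists these caterpillars up to the spine reflection $T_i\cong T_{\frac{n-3}{2}-i}$, with the balanced graph at the left endpoint and the conjectured broom at the right endpoint. It therefore suffices to prove the strict monotonicity $\rho_i<\rho_{i+1}$, i.e. that shifting one pendant from the lighter end-broom to the heavier one strictly raises $\rho$. I would prove this via Lemma \ref{qubian}. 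Rooting at the central bare spine vertex $w$ and iterating the edge identity gives $f(T_i,x)=x\,f(T_i-w,x)-\sum_{z\sim w}f(T_i-w-z,x)$, so $f(T_i,x)$ becomes a bilinear expression in the characteristic polynomials of the two end-brooms and their vertex-deleted subgraphs; moreover the pendant identity lets one reduce each loaded spine vertex to a bare one through the weight substitution $x\mapsto x-\frac{1}{x}$ (consistent with Lemma \ref{circ}), yielding a transfer-matrix-type closed form. From it the difference $f(T_{i+1},x)-f(T_i,x)$ factors, and evaluating its sign at $x=\rho_i$ (where $f(T_i,\rho_i)=0$) should give $f(T_{i+1},\rho_i)<0$; since $f(T_{i+1},x)\to+\infty$ as $x\to\infty$, this forces $\rho_{i+1}>\rho_i$. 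Iterating from the balanced split outward shows $\rho$ is strictly increasing along the family, so $T^{(n+3)/2}_{\lceil(n-3)/4\rceil,\lfloor(n-3)/4\rfloor}$ is the unique minimizer and in particular $\rho\big(T^{(n+3)/2}_{\lceil(n-3)/4\rceil,\lfloor(n-3)/4\rfloor}\big)<\rho\big(T^{(n+3)/2}_{(n-3)/2,0}\big)$, disproving Conjecture \ref{conj} and establishing Theorem \ref{extremal-graph}.

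The hard part will be the sign analysis in the last step: producing a sufficiently explicit form of $f(T_i,x)$ and controlling the sign of $f(T_{i+1},x)-f(T_i,x)$ at $x=\rho_i$ uniformly in $i$ and $n$. After the substitution $x\mapsto x-\frac{1}{x}$ the brooms behave like weighted paths, so the difference should collapse to a product of Chebyshev-like path polynomials times a manifestly signed factor; certifying positivity of that factor for all relevant $x\ge\rho_i$ (using, e.g., $\rho_i>2$, which follows since $T_i$ contains a long corona-broom via Lemmas \ref{subgraph} and \ref{circ}) and securing the strictness needed for uniqueness is the delicate computation. A secondary obstacle is the reduction of Paragraph two: each grafting, sliding, or subdivision move driving a general tree into the family $\{T_i\}$ must be certified not to perturb the domination number away from $\frac{n-1}{2}$, which is exactly where Lemmas \ref{dominating-set-support} and \ref{starlike-tree-domination-number} must be applied with care.
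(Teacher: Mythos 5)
Your proposal does not prove the statement at hand. The statement is Lemma \ref{qubian}: the edge--deletion recurrence $f(T,x)=f(T-uv,x)-f(T-u-v,x)$ for the characteristic polynomial of a tree, a purely algebraic identity that the paper simply cites from \cite{Cvetkovic-Rowlinson-Simic-2010} and never reproves. What you have written is instead a strategy outline for Theorem \ref{extremal-graph} (reduction to trees, caterpillar structure, monotonicity of $\rho$ over the family $T^{\frac{n+3}{2}}_{i,\frac{n-3}{2}-i}$). Worse, your third paragraph \emph{invokes} Lemma \ref{qubian} as the engine of the monotonicity argument, so read as a proof of that lemma your text is circular: it assumes exactly the identity it is asked to establish, and at no point does it engage with characteristic polynomials of an arbitrary tree with an arbitrary edge $uv$.

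For the record, a correct proof is two lines. Since $T$ is a forest, the Sachs coefficient theorem gives $f(T,x)=\sum_{k\ge 0}(-1)^k m(T,k)\,x^{n-2k}$, where $m(T,k)$ is the number of $k$-matchings of $T$. Splitting the $k$-matchings of $T$ according to whether they contain the edge $uv$ yields $m(T,k)=m(T-uv,k)+m(T-u-v,k-1)$; substituting this into the sum and noting that $T-u-v$ has $n-2$ vertices (so the term $(-1)^{k-1}m(T-u-v,k-1)x^{(n-2)-2(k-1)}$ carries the opposite sign at degree $n-2k$) gives precisely $f(T,x)=f(T-uv,x)-f(T-u-v,x)$. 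Alternatively, specialize the general one-edge recurrence $f(G,x)=f(G-uv,x)-f(G-u-v,x)-2\sum_{C\ni uv}f(G-V(C),x)$, the sum running over cycles through $uv$, to a tree, where that sum is empty. None of the spectral-radius machinery you marshal (Lemmas \ref{lahuan}, \ref{subdivision}, \ref{subgraph}, \ref{circ}) is relevant to this statement.
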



\begin{lem}[\cite{Wang-Huang-2010}]\label{compare}
Let $G$ and $H$ be two graphs with the characteristic polynomials $f(G,x)$ and $f(H,x)$, respectively.
If $f(G,x) > f(H,x )$ for any $x \ge \rho(G)$, then $\rho(G) < \rho(H )$.
\end{lem}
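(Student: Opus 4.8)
The plan is to turn the global polynomial inequality in the hypothesis into a statement about the \emph{sign} of $f(H,x)$ at a single point, and then apply the intermediate value theorem. First I would record three elementary facts. Since $f(G,x)=\det(xI-A(G))$ and $f(H,x)=\det(xI-A(H))$, both characteristic polynomials are \emph{monic} (leading coefficient $1$). Since $A(G)$ and $A(H)$ are real symmetric matrices, all of their eigenvalues are real, and by definition $\rho(G)$ is the largest root of $f(G,x)$ while $\rho(H)$ is the largest root of $f(H,x)$. Consequently $f(G,\rho(G))=0$, and because $f(H,x)$ is monic we have $f(H,x)\to+\infty$ as $x\to+\infty$.

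The key step is to evaluate the hypothesis at the single point $x=\rho(G)$. Since $\rho(G)\ge\rho(G)$, the assumption $f(G,x)>f(H,x)$ applies there, and using $f(G,\rho(G))=0$ it gives $0=f(G,\rho(G))>f(H,\rho(G))$, that is, $f(H,\rho(G))<0$. This is the heart of the argument: the full inequality $f(G,x)>f(H,x)$ is only needed at the endpoint $x=\rho(G)$, where the left-hand side vanishes because $\rho(G)$ is a root of $f(G,x)$.

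Finally I would combine $f(H,\rho(G))<0$ with $f(H,x)\to+\infty$ as $x\to+\infty$. By the intermediate value theorem, $f(H,x)$ has a real root $r$ with $r>\rho(G)$. Since $\rho(H)$ is the largest root of $f(H,x)$, it follows that $\rho(H)\ge r>\rho(G)$, which is exactly the claimed inequality $\rho(G)<\rho(H)$.

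I expect essentially no serious obstacle in this argument; the only points requiring a line of justification are that $\rho(G)$ is the \emph{largest} root of $f(G,x)$ (so that evaluating the hypothesis there forces $f(H,\rho(G))<0$) and that $f(H,x)$ is monic (so that it tends to $+\infty$ and the intermediate value step produces a root beyond $\rho(G)$). Both follow immediately from the symmetry of the adjacency matrices and the definition of the characteristic polynomial, so the proof is short and self-contained.
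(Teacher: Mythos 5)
Your proof is correct; note that the paper itself does not prove Lemma \ref{compare} but imports it from the cited reference \cite{Wang-Huang-2010}, and your argument is exactly the standard one for this fact: evaluate the hypothesis at $x=\rho(G)$ to get $f(H,\rho(G))<f(G,\rho(G))=0$, then use monicity of $f(H,x)$ and the intermediate value theorem to produce a root of $f(H,x)$ strictly greater than $\rho(G)$. The only point worth making explicit is the one you flag: $\rho(G)$ is the largest \emph{root} of $f(G,x)$, which holds here because the paper defines $\rho(G)$ as the largest eigenvalue of the (real symmetric, nonnegative) adjacency matrix, so no gap remains.
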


\section{Proof}
In this section, we always assume that  $n$ is odd and
$T^*$ is the minimizer graph in $\mathbb{G}_{n,\lfloor\frac{n}{2}\rfloor}$.
Clearly, $\gamma(T^*)=\frac{n-1}{2}$. By Lemma \ref{min-tree}, we know that $T^*$ is a tree.
For any vertex $u$ of a tree $T$,
let  $L_{T}(u)$ be the set of leaves adjacent to vertex $u$ in $T$ and denote by $l_{T}(u)=|L_{T}(u)|$. Particularly, if $l_{T}(u)=0$ then there is no leaves adjacent to $u$ in $T$.
For a minimum domination set $D$ of $T$, let $v\in D$ and $u\in N_T(v)$.
If there is no vertex in $D\setminus \{v\}$ dominating $u$, then we call $u$ is \emph{uniquely dominated} by $v$ relative to $D$.
Firstly, we will give an upper bound of the spectral radius of $T^*$.

\begin{lem}\label{upbound}
For odd $n \ge 3$,
let $T^*$ be the minimizer graph in $ \mathbb{G}_{n,\lfloor\frac{n}{2}\rfloor}$.
Then $\rho(T^*)< 1+\sqrt{2}$.
\end{lem}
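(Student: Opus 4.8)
The plan is to bound $\rho(T^*)$ from above by exhibiting a single explicit tree in $\mathbb{G}_{n,\lfloor\frac{n}{2}\rfloor}$ whose spectral radius lies strictly below $1+\sqrt{2}$; since $T^*$ is the minimizer, its spectral radius cannot exceed that of any competitor. The natural candidate is the conjectured extremal tree $T^{\frac{n+3}{2}}_{\frac{n-3}{2},0}$, which the introduction already records as a member of $\mathbb{G}_{n,\lfloor\frac{n}{2}\rfloor}$ (it is the case $i=\frac{n-3}{2}$ of the family $T^{\frac{n+3}{2}}_{i,\frac{n-3}{2}-i}$).

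First I would observe that this candidate embeds in a corona. Writing $m=\frac{n+3}{2}$, the tree $T^{\frac{n+3}{2}}_{\frac{n-3}{2},0}$ is obtained from the path $P_m=u_1\cdots u_m$ by attaching one pendant to each of $u_1,\dots,u_{(n-3)/2}$; since $\frac{n-3}{2}=m-3<m$, this is exactly $P_m\circ K_1$ with the pendants at the last three path vertices deleted. Hence $T^{\frac{n+3}{2}}_{\frac{n-3}{2},0}$ is a proper subgraph of $P_m\circ K_1$, and Lemma \ref{subgraph} yields $\rho\bigl(T^{\frac{n+3}{2}}_{\frac{n-3}{2},0}\bigr)<\rho(P_m\circ K_1)$.

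Next I would evaluate the corona's spectral radius through Lemma \ref{circ}, which gives $\rho(P_m\circ K_1)=\tfrac12\bigl(\rho(P_m)+\sqrt{\rho^2(P_m)+4}\bigr)$. The function $g(t)=\tfrac12\bigl(t+\sqrt{t^2+4}\bigr)$ is strictly increasing in $t$, and $\rho(P_m)<2$ for every finite $m$, so $\rho(P_m\circ K_1)<g(2)=\tfrac12(2+\sqrt{8})=1+\sqrt{2}$. Chaining the two inequalities gives $\rho\bigl(T^{\frac{n+3}{2}}_{\frac{n-3}{2},0}\bigr)<1+\sqrt{2}$, and because $T^*$ minimizes the spectral radius over $\mathbb{G}_{n,\lfloor\frac{n}{2}\rfloor}$ while $T^{\frac{n+3}{2}}_{\frac{n-3}{2},0}$ belongs to this class, we conclude $\rho(T^*)\le\rho\bigl(T^{\frac{n+3}{2}}_{\frac{n-3}{2},0}\bigr)<1+\sqrt{2}$.

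There is no genuinely hard step here: the whole argument rests on spotting that the competitor tree sits inside a corona $P_m\circ K_1$, and that the corona's spectral radius is forced below the limiting value $1+\sqrt{2}$ by the monotonicity of $g$ together with the classical bound $\rho(P_m)<2$. The only points requiring a touch of care are confirming that the candidate lies in $\mathbb{G}_{n,\lfloor\frac{n}{2}\rfloor}$ (already granted in the introduction) and that the embedding is proper so that Lemma \ref{subgraph} gives a strict inequality; in any case the strictness is automatic from $\rho(P_m)<2$, and the bound holds uniformly for every odd $n\ge 3$ (the case $n=3$ reducing to $P_3\subset P_3\circ K_1$).
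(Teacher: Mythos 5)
Your proposal is correct and takes essentially the same route as the paper: the authors also bound $\rho(T^*)$ by the spectral radius of a tree $T^{\frac{n+3}{2}}_{i,\frac{n-3}{2}-i}$ in the class, view it as the corona $P_{\frac{n+3}{2}}\circ K_1$ with three pendant vertices deleted so that Lemma \ref{subgraph} applies, and then use Lemma \ref{circ} with $\rho\bigl(P_{\frac{n+3}{2}}\bigr)<2$ to get the bound $1+\sqrt{2}$. The only cosmetic differences are that you fix the specific competitor $i=\frac{n-3}{2}$ and argue via monotonicity of $g(t)=\tfrac12\bigl(t+\sqrt{t^2+4}\bigr)$, whereas the paper keeps $i$ arbitrary and substitutes the exact value $\rho\bigl(P_{\frac{n+3}{2}}\bigr)=2\cos\frac{2\pi}{n+5}$ before bounding by $1+\sqrt{2}$.
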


\begin{proof}
For any $i\in [\lceil\frac{n-3}{4}\rceil,\frac{n-3}{2}]$,
recall that $T_{i,\frac{n-3}{2}-i}^{\frac{n+3}{2}} \in \mathbb{G}_{n,\lfloor\frac{n}{2}\rfloor}$ and
$T_{i,\frac{n-3}{2}-i}^{\frac{n+3}{2}}$ (see Fig.\ref{T}) is obtained from $P_{\frac{n+3}{2}}\circ K_1$ by deleting three leaves adjacent to $v_{i+1}, v_{i+2}, v_{i+3}$ of the path  $P_{\frac{n+3}{2}}=v_1v_2\cdots v_{\frac{n+3}{2}}$.
Thus, $\rho(T^*)\le \rho(T_{i,\frac{n-3}{2}-i}^{\frac{n+3}{2}})< \rho(P_{\frac{n+3}{2}}\circ K_1)$.
By Lemma \ref{circ} and the spectral radius of the path $P_{\frac{n+3}{2}}$, we have
$$
\rho(T^*)
\!<\! \rho(P_{\frac{n+3}{2}}\circ K_1)
\!=\!\frac{1}{2}(\rho(P_{\frac{n+3}{2}})+\sqrt{\rho^2(P_{\frac{n+3}{2}})+4})
\!=\!\cos\frac{2\pi}{n+5}+\sqrt{\cos^2\frac{2\pi}{n+5} +1}
\!<\!1+\sqrt{2}.
$$
It completes the proof.
\end{proof}

\begin{lem}\label{leave-number}
For odd $n \ge 5$,
let $T^*$ be the minimizer graph in $\mathbb{G}_{n,\lfloor\frac{n}{2}\rfloor}$.
Then $l_{T^*}(u)\le 1$ for any $u\in V(T^*)$.
\end{lem}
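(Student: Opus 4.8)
The plan is to argue by contradiction, using a local operation that merges two pendant leaves at a common vertex into a single pendant path of length two. This operation strictly decreases the spectral radius by Lemma \ref{lahuan}, and the point will be to show that it keeps the domination number pinned at its maximal value $\frac{n-1}{2}$, so that the resulting tree stays inside $\mathbb{G}_{n,\frac{n-1}{2}}$ and contradicts the minimality of $T^*$.

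Concretely, suppose some $u\in V(T^*)$ has $l_{T^*}(u)\ge 2$, and fix two leaves $w_1,w_2\in L_{T^*}(u)$. I would form $T'$ from $T^*$ by deleting the edge $uw_2$ and adding the edge $w_1w_2$, i.e. replacing the two pendant leaves at $u$ by the pendant path $uw_1w_2$. Writing $G=T^*-\{w_1,w_2\}$, one has $T^*=G_{1,1}$ and $T'=G_{2,0}$ in the notation of Lemma \ref{lahuan}. Since $n\ge 5$ forces $\deg_{T^*}(u)\ge 3$ (otherwise $T^*\cong P_3$ and $n=3$), the tree $G$ carries an edge at $u$ and is nontrivial, so Lemma \ref{lahuan} gives the strict inequality $\rho(T')<\rho(T^*)$.

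It then remains to verify that $\gamma(T')=\frac{n-1}{2}$, which I expect to be the main obstacle, since a priori the leaf-to-path swap could instead lower the domination number and eject $T'$ from the class. The upper bound comes for free: $T'$ is a tree on the odd number $n$ of vertices, so Lemma \ref{domination-number-upbound} yields $\gamma(T')\le\frac{n}{2}$, whence $\gamma(T')\le\frac{n-1}{2}=\gamma(T^*)$. For the reverse inequality I would start from a minimum dominating set $D'$ of $T'$ that contains all support vertices (Lemma \ref{dominating-set-support}); in particular $w_1\in D'$, as $w_1$ is the support of the leaf $w_2$ in $T'$. Setting $D=(D'\setminus\{w_1\})\cup\{u\}$, so that $|D|\le|D'|$, I claim $D$ dominates $T^*$: the vertex $u$ covers $u,w_1,w_2$ and all neighbours of $u$ in $G$, while every other vertex $x$ has the same closed neighbourhood in $T^*$ and $T'$ (the edge swap touches only $u,w_1,w_2$) and cannot have been dominated solely by $w_1$, whose closed neighbourhood in $T'$ is exactly $\{w_1,u,w_2\}$. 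Hence $\gamma(T^*)\le|D|\le|D'|=\gamma(T')$, giving $\gamma(T')=\frac{n-1}{2}$. Thus $T'\in\mathbb{G}_{n,\frac{n-1}{2}}$ with $\rho(T')<\rho(T^*)$ contradicts the minimality of $T^*$, and we conclude $l_{T^*}(u)\le 1$ for every $u\in V(T^*)$. The delicate bookkeeping is entirely confined to the lower bound $\gamma(T')\ge\gamma(T^*)$; once the support-vertex choice of $D'$ is made, the argument that $w_1$ dominates nothing outside $\{u,w_1,w_2\}$ closes the gap, and the rest is a direct invocation of Lemmas \ref{lahuan} and \ref{domination-number-upbound}.
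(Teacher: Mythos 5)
Your proof is correct, and its spectral core is the same as the paper's: replace two pendant leaves $w_1,w_2$ at $u$ by the pendant path $uw_1w_2$ and invoke Lemma \ref{lahuan} (with the same check that $n\ge 5$ makes $G=T^*-\{w_1,w_2\}$ nontrivial, so the Li--Feng lemma applies strictly). The genuine difference is in how the domination number is controlled. The paper splits into two cases: for $l_{T^*}(u)\ge 3$ it does not perform the merge at all, but deletes $u$ together with $L_{T^*}(u)$ and applies Ore's bound (Lemma \ref{domination-number-upbound}) to the remainder, getting $\gamma(T^*)\le \frac{n-4}{2}+1<\frac{n-1}{2}$, a contradiction; for $l_{T^*}(u)=2$ it performs your merge but argues \emph{forward}: starting from a minimum dominating set $D^*$ of $T^*$ containing all support vertices, it shows that no vertex of $N_{T^*}(u)\setminus\{v_1,v_2\}$ is uniquely dominated by $u$, pushes $D^*-\{u\}$ plus the new middle vertex into the modified tree, and asserts the resulting set is a \emph{minimum} dominating set there. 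You argue \emph{backward}: pull a minimum dominating set $D'$ of $T'$ (containing all support vertices, so $w_1\in D'$) back to the dominating set $(D'\setminus\{w_1\})\cup\{u\}$ of $T^*$, which gives the lower bound $\gamma(T')\ge\gamma(T^*)$ outright, and pair it with Ore's bound for the upper bound. This buys two things: you treat all of $l_{T^*}(u)\ge 2$ uniformly, with no separate counting case; and your backward step is precisely the justification of the minimality claim the paper leaves implicit --- exhibiting a dominating set of the new tree of size $\frac{n-1}{2}$ only bounds its domination number from above, and one needs your pull-back (or an equivalent argument) to rule out $\gamma(T')<\frac{n-1}{2}$. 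In exchange, the paper's forward version is shorter once its unique-domination claim is granted, and its $l\ge 3$ case needs no spectral input at all.
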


\begin{proof}
Firstly, suppose that there exists a vertex $u\in V(T^*)$ such that $l_{T^*}(u)\geq 3$.
By  Lemma \ref{dominating-set-support}, we may denote
 $D^*$  a  minimum dominating set of $T^*$ containing all support vertices.
Clearly, $u\in D^*$ and
all  leaves of $u$  are uniquely dominated by $u$  relative to $D^*$.
Let $T'=T^*- (L_{T^*}(u)\cup \{u\})$.
By Lemma \ref{domination-number-upbound}, we have
$$
\gamma(T')\leq \frac{n(T')}{2}= \frac{n-l_{T^*}(u)-1}{2}\le \frac{n-3-1}{2}= \frac{n-4}{2}.
$$
Therefore,
$$
\gamma(T^*)=\gamma(T')+1\le \frac{n-4}{2}+1< \frac{n-1}{2},
$$
a contradiction.
Next we
may suppose that there exists a vertex $u\in V(T^*)$ such that  $l_{T^*}(u)=2$.
Denote  $L_{T^*}(u)=\{v_1, v_2\}$.
It is clear that $v_1$ and $v_2$ are uniquely dominated by $u$ relative to  $D^*$.
Since $\gamma(T^*)=\frac{n-1}{2}$,
all vertices in  $ N_{T^*}(u)\setminus \{v_1, v_2\}$ are not uniquely dominated by $u$ relative to  $D^*$.
Now we construct a new tree $T$ obtained from $T^*$ by deleting  $v_2$ and subdivision the edge $uv_1$ once.
Note that $n\ge 5$.  We get that $T\not\cong T^*$, and
$D=D^*-\{u\}+\{v_2\}$ is a minimum dominating set of $T$. Thus, $T\in \mathbb{G}_{n,\lfloor\frac{n}{2}\rfloor}$.
By Lemma \ref{lahuan}, we have $\rho(T)< \rho(T^*)$, which is a contradiction.
Therefore,  $l_{T^*}(u)\le 1$ for any $u\in V(T^*)$.
\end{proof}


If $G$ contains an induced $3$-path, say $P_3:ww'w''$,   with
$d_{T^*}(w)=1$, $d_{T^*}(w')=2$  and $d_{G}(w'')\ge 2$,
then we call $P_3:ww'w''$ is a \emph{pendant $3$-path} of a graph $G$.
Since $T^*$ is a tree, from  Lemma \ref{leave-number},
every diametrical  path of $T^*$ contains  two pendant $3$-paths.
Thus, $T^*$ contains at least two pendant $3$-paths.

\begin{lem}\label{P3}
For odd $n \ge 5$,
let $T^*$ be the minimizer graph in $\mathbb{G}_{n,\lfloor\frac{n}{2}\rfloor}$ and $D^*$ be a  minimum dominating set of $T^*$ containing all support vertices.
Then  $T^*$ contains at most one pendant $3$-path, say $P_3:ww'w''$, with
$d_{T^*}(w)=1$, $d_{T^*}(w')=2$ and $d_{T^*}(w'')\ge 2$
such that $w''$ is  uniquely dominated by $w'$ relative to  $D^*$.
   \end{lem}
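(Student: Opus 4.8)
The plan is to argue by contradiction. Suppose $T^*$ has two pendant $3$-paths $P\colon w_1w_1'w_1''$ and $Q\colon w_2w_2'w_2''$, with $d_{T^*}(w_i)=1$, $d_{T^*}(w_i')=2$, $d_{T^*}(w_i'')\ge 2$, such that relative to $D^*$ the vertex $w_1''$ is uniquely dominated by $w_1'$ and $w_2''$ is uniquely dominated by $w_2'$. First I would fix the position of $D^*$ on these paths: each support vertex $w_i'$ lies in $D^*$, while the leaves $w_i$ and the uniquely dominated $w_i''$ lie outside $D^*$, and both $w_i$ and $w_i''$ are private neighbours of $w_i'$. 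The first genuine step is to show the two paths are vertex-disjoint; in particular $w_1''\neq w_2''$, for otherwise the common vertex would have the two distinct $D^*$-neighbours $w_1'$ and $w_2'$ (distinct since $w_1'=w_2'$ would force a support vertex with two leaves, contradicting Lemma~\ref{leave-number}) and so could not be uniquely dominated by either; the degenerate cases such as $w_1''=w_2'$ are excluded by $d_{T^*}(w_i')=2$ together with Lemma~\ref{leave-number}.

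Next I would perform a relocation that concentrates the two pendant paths at a single vertex. Delete the edge $w_2''w_2'$ and add the edge $w_1''w_2'$; this detaches the pendant path $w_2'w_2$ from $w_2''$ and reattaches it at $w_1''$, so that in the resulting tree $w_1''$ carries two pendant paths of length $2$, namely $w_1''w_1'w_1$ and $w_1''w_2'w_2$. The Li--Feng transformation (Lemma~\ref{lahuan}) at $v=w_1''$ with $k=m=2$ then yields a tree $T$ in which $w_1''$ carries one pendant path of length $3$ and one of length $1$, which is the target graph I would aim at. Since the relocation itself is not an instance of Lemma~\ref{lahuan}, I would not try to chain the two moves spectrally; instead I would secure $\rho(T)<\rho(T^*)$ by comparing the characteristic polynomials of $T$ and $T^*$ directly through Lemma~\ref{compare}, signing the polynomial difference on the range $x\ge\rho(T^*)$ by means of the a priori bound $\rho(T^*)<1+\sqrt2$ supplied by Lemma~\ref{upbound} (Lemma~\ref{subdivision} and Lemma~\ref{lahuan} serve to explain why $T$, the ``more spread out'' configuration, is the correct competitor).

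The remaining and most delicate task is to verify $T\in\mathbb{G}_{n,\lfloor n/2\rfloor}$, that is $\gamma(T)=\frac{n-1}{2}$, and this is where I expect the real obstacle to lie. By Ore's bound (Lemma~\ref{domination-number-upbound}) and the parity of $n$ one gets $\gamma(T)\le\frac{n-1}{2}$ for free, so the substance is the reverse inequality $\gamma(T)\ge\frac{n-1}{2}$, and it is precisely here that the unique-domination hypothesis is indispensable. Because $w_2''$ was dominated solely by $w_2'$ and still retains a neighbour inside the bulk of the tree, the detachment of $w_2'w_2$ frees the dominator $w_2'$ to cover the lengthened pendant path at $w_1''$; following the template in the proof of Lemma~\ref{leave-number}, I would show that any dominating set of $T$ of size smaller than $\frac{n-1}{2}$ can be converted into a dominating set of $T^*$ of the same size, the conversion reinstating the relocated path and re-dominating $w_2''$ via the freed structure, which is feasible exactly because $w_2'$ was the unique dominator of $w_2''$ (here Lemma~\ref{dominating-set-support} lets one assume all support vertices are included). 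This would contradict $\gamma(T^*)=\frac{n-1}{2}$ and force $\gamma(T)=\frac{n-1}{2}$. Once both the strict spectral decrease and this domination equality are established, $T\in\mathbb{G}_{n,\lfloor n/2\rfloor}$ with $\rho(T)<\rho(T^*)$ contradicts the minimality of $T^*$, and therefore at most one such pendant $3$-path can occur.
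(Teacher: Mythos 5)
Your opening observations are sound (the paths are disjoint, $w_i'\in D^*$, and $w_i,w_i''\notin D^*$), but after that your contradiction rests on two claims that are never actually proved, and neither is believable as stated. First, the spectral inequality $\rho(T)<\rho(T^*)$: the composite move (detach the pendant path from $w_2''$, re-attach it at $w_1''$, then rebalance by Lemma \ref{lahuan}) is not an instance of any of the paper's spectral tools, and such relocation moves have no universal monotonicity --- whether attaching a branch at $w_1''$ rather than at $w_2''$ raises or lowers $\rho$ depends on the relative Perron weights of those two vertices, which you do not control. For instance, applying your two moves to the path $P_6$ (two pendant $3$-paths sharing an edge between $w_1''$ and $w_2''$) produces the spider with legs of lengths $3,1,1$, whose spectral radius is strictly \emph{larger} than $\rho(P_6)$, since the path uniquely minimizes the spectral radius among connected graphs of a given order; this is not a legal instance of the lemma's hypotheses, but it shows the transformation itself carries no sign. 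Invoking Lemma \ref{compare} together with the bound $\rho(T^*)<1+\sqrt2$ from Lemma \ref{upbound} is not a substitute for the polynomial inequality $f(T,x)>f(T^*,x)$ on the relevant half-line, which is exactly what would need to be computed and which you never derive. Second, the equality $\gamma(T)=\frac{n-1}{2}$ is likewise only sketched, and the natural size-preserving conversion fails: if a dominating set of $T$ must retain $w_1''$ in order to dominate neighbours in the body of the tree, then the consolidated pendant structure of $T$ costs only two dominators ($w_1''$ plus one vertex on the long path), whereas in $T^*$ the vertices $w_1,w_1'$, $w_2,w_2'$ together with $w_1''$'s body duties force three; your exchange breaks precisely there.

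The deeper miss is that no competitor graph is needed at all: under the stated hypotheses the structure is already combinatorially impossible, which is the paper's proof. Since $D^*$ contains all support vertices and $w_i''$ is uniquely dominated by $w_i'$, one has $w_i''\notin D^*$ (so $w_i''$ supports no leaf and the deletion below isolates no vertex), and $w_i'$ dominates nothing outside $\{w_i,w_i',w_i''\}$. Deleting the six vertices of the two paths therefore removes exactly the two dominators $w_1',w_2'$, giving $\gamma(T^*)=\gamma(T^*-\{P_3^1,P_3^2\})+2$, and Ore's bound (Lemma \ref{domination-number-upbound}) applied to the remaining $n-6$ vertices yields $\gamma(T^*)\le\frac{n-6}{2}+2=\frac{n-2}{2}<\frac{n-1}{2}$, contradicting $\gamma(T^*)=\frac{n-1}{2}$. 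This purely counting argument is what the unique-domination hypothesis is for; your route replaces it by two unproved (and, as they stand, doubtful) analytic claims.
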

 \begin{proof}
Suppose to the contrary  that $T^*$ contains   two pendant $3$-paths, say  $P_3^1:w_1 w_1' w''_1$ and $P_3^2:w_2 w_2' w''_2$, with $d_{T^*}(w_1)=d_{T^*}(w_2)=1$, $d_{T^*}(w'_1)=d_{T^*}(w'_2)=2$ and  $d_{T^*}(w''_1)$, $d_{T^*}(w''_2)\ge 2$
such that  $w'_1,w'_2\in D^*$ and
$w''_1,w''_2$ are uniquely dominated by  $w'_1$, $w'_2$ relative to  $D^*$, respectively.
Then we have $\gamma(T^*)=\gamma(T^*- \{P^1_3, P^2_3\})+2$. Notice that $n(T^*- \{P^1_3, P^2_3\})=n-6$.
By Lemma \ref{domination-number-upbound} we have
$$
\gamma(T^*)=\gamma(T^*- \{P^1_3, P^2_3\})+2
\le \frac{n-6}{2}+2<\frac{n-1}{2},
$$
a contradiction.
It completes the proof.
 \end{proof}

\begin{lem}\label{P3-exist}
For odd $n \ge 5$, let $T^*$ be the minimizer graph in $\mathbb{G}_{n,\lfloor\frac{n}{2}\rfloor}$.
For any  pendant $3$-path $P_3: ww'w''$ with $d_{T^*}(w)=1$, $d_{T^*}(w')=2$ and $d_{T^*}(w'')\ge 2$,
there exists a minimum dominating set $D$ of $T^*$ such that $w\notin D$, $w'\in D$ and
$w''$ is not uniquely dominated by $w'$ relative to  $D$.
   \end{lem}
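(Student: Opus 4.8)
The plan is to reduce to a smaller tree by deleting the two vertices $w,w'$ of the pendant $3$-path and then adjoining $w'$ back to a minimum dominating set of the reduced tree. The point is that removing $w'$ \emph{before} choosing the dominating set is exactly what forces $w''$ to be dominated without any help from $w'$, which is precisely the content of the conclusion.

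First I would form $T'=T^*-\{w,w'\}$. Since $w'$ has degree $2$ with neighbours $w$ and $w''$, deleting $w'$ from the tree splits it into the single leaf $w$ together with the subtree containing $w''$; removing $w$ as well leaves exactly this subtree, which is connected on $n-2$ vertices. The hypothesis $d_{T^*}(w'')\ge 2$ is used here: it guarantees that $w''$ retains at least one neighbour in $T'$, so $T'$ is connected on $n-2\ge 3$ vertices and in particular has no isolated vertex. Hence Lemma~\ref{domination-number-upbound} gives $\gamma(T')\le\frac{n-2}{2}$, and because $n$ is odd this sharpens to the integer bound $\gamma(T')\le\frac{n-3}{2}$.

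Next I would pick a minimum dominating set $D''$ of $T'$ and set $D=D''\cup\{w'\}$. As $w'$ dominates $w,w',w''$ and $D''$ dominates all of $V(T^*)\setminus\{w,w'\}$ (edges of $T'$ are edges of $T^*$), the set $D$ dominates $T^*$, and
\[
|D|=|D''|+1\le \frac{n-3}{2}+1=\frac{n-1}{2}=\gamma(T^*).
\]
Since $\gamma(T^*)$ is minimal, equality holds throughout and $D$ is a minimum dominating set. It then remains to verify the three properties: $w'\in D$ by construction; $w\notin D$ because $D''\subseteq V(T')$ while $w\notin V(T')$ and $w\ne w'$; and, as $w''\in V(T')$, the set $D''$ dominates $w''$ \emph{inside} $T'$, so either $w''\in D''$ or some $x\in N_{T^*}(w'')\setminus\{w'\}$ lies in $D''$. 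In either case a vertex of $D\setminus\{w'\}$ dominates $w''$, so $w''$ is not uniquely dominated by $w'$ relative to $D$, as required.

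Since the argument is short, the only genuine ``obstacle'' consists of two bookkeeping checks: verifying that $T'$ has no isolated vertex (exactly where $d_{T^*}(w'')\ge2$ enters) so that the Ore-type bound of Lemma~\ref{domination-number-upbound} is available, and using the parity of $n$ to pass from $\frac{n-2}{2}$ to $\frac{n-3}{2}$, which is what makes the inequality $|D|\le\gamma(T^*)$ tight and hence forces $D$ to be minimum. I would also note that this construction needs neither a case distinction (according to whether some initial dominating set already works) nor Lemma~\ref{P3}: deleting $w'$ at the outset makes the desired conclusion automatic.
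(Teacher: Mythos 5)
Your proof is correct, and it follows a genuinely different and lighter route than the paper's. The paper starts from a minimum dominating set $D^*$ containing all support vertices (Lemma~\ref{dominating-set-support}) and distinguishes two cases according to whether $w''$ is uniquely dominated by $w'$ relative to $D^*$; in the problematic case it deletes the whole path $\{w,w',w''\}$, notes that $\gamma(T^*-P_3)=\frac{n-3}{2}$ is exactly half the (even) order of $T^*-P_3$, and invokes Lemma~\ref{half-domination} (the complementation lemma, which rests on K\"onig's theorem) to replace $D^*$ restricted to $T^*-P_3$ by its complement before adjoining $w'$. You instead delete only $\{w,w'\}$, bound $\gamma(T')\le\frac{n-3}{2}$ by Ore's inequality (Lemma~\ref{domination-number-upbound}) together with the parity of $n$, and adjoin $w'$ to an arbitrary minimum dominating set $D''$ of $T'$: the count $|D''|+1\le\frac{n-1}{2}=\gamma(T^*)$ forces $D=D''\cup\{w'\}$ to be minimum, and since $D''$ already dominates $w''$ inside $T'$, some vertex of $D\setminus\{w'\}$ (either $w''$ itself or a neighbour of $w''$ other than $w'$) dominates $w''$, which is exactly the required conclusion. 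Your bookkeeping is sound: $T'$ is a tree on $n-2\ge3$ vertices (delete the leaf $w$, then the resulting leaf $w'$), hence has no isolated vertex, and the tightness of the counting is what upgrades $D$ from a dominating set of the right size to a minimum one. What your argument buys is the elimination of both the case distinction and the two structural lemmas (\ref{dominating-set-support} and \ref{half-domination}); nothing is lost, since the lemma only asserts the existence of one such $D$, which your construction produces directly.
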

\begin{proof}
Let  $D^*$ be a  minimum dominating set of $T^*$ containing all support vertices.
For any  pendant $3$-path $P_3: ww'w''$ with $d_{T^*}(w)=1$, $d_{T^*}(w')=2$ and $d_{T^*}(w'')\ge 2$,
if $w''$ is  not uniquely dominated by $w'$ relative to  $D^*$, then the result immediately holds.
If $w''$ is  uniquely dominated by $w'$ relative to  $D^*$, then we have
 $$\gamma(T^*-P_3)=\gamma(T^*)-1=\frac{n-3}{2}
 =\frac{n(T^*-P_3)}{2}$$ due to $\gamma(T^*)=\frac{n-1}{2}$.
 Since $n(T^*-P_3)=n-3$ is even,
 by Lemma \ref{half-domination} we have
 $B=V(T^*-P_3)\setminus D^*$ is also a minimum dominating set of
 $T^*-P_3$.
 Take $D=B\cup \{w'\}$.
 Clearly,  $D$ is a   minimum dominating set of
 $T^*$ and $w\notin D$, $w'\in N_{T^*}(w'')\subseteq D$.
 Thus, $w''$ is not uniquely dominated by $w'$ relative to  $D$.
\end{proof}

\begin{lem}\label{max-degree-less-than4}
For odd $n \ge 11$,
let $T^*$ be the minimizer graph in $\mathbb{G}_{n,\lfloor\frac{n}{2}\rfloor}$.
Then $3\le \Delta(T^*)\le 4$.
\end{lem}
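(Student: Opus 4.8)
The plan is to prove the two inequalities separately, with the lower bound being routine and the upper bound resting on a single spectral computation together with Lemma~\ref{upbound} and Lemma~\ref{subgraph}. Recall that at the start of this section we have $\gamma(T^*)=\frac{n-1}{2}$, and by Lemma~\ref{min-tree} that $T^*$ is a tree.

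For the lower bound $\Delta(T^*)\ge 3$, I would argue that a tree with maximum degree at most $2$ is a path. Since $\gamma(P_n)=\lceil n/3\rceil$ and, for odd $n\ge 11$, one has $\lceil n/3\rceil\le\frac{n+2}{3}<\frac{n-1}{2}=\gamma(T^*)$ (the inequality $\frac{n+2}{3}<\frac{n-1}{2}$ being equivalent to $n>7$), the tree $T^*$ cannot be a path. Hence $\Delta(T^*)\ge 3$.

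For the upper bound I would argue by contradiction: suppose $\Delta(T^*)\ge 5$ and let $u$ be a vertex with $d_{T^*}(u)=\Delta(T^*)$. By Lemma~\ref{leave-number}, $u$ has at most one leaf neighbour, so among its (at least five) neighbours at least four, say $a_1,a_2,a_3,a_4$, are non-leaves, and each such $a_i$ then has a neighbour $b_i\ne u$. Because $T^*$ is a tree these branches are vertex-disjoint, so together with a fifth neighbour $a_5$ of $u$ we obtain, as a subgraph of $T^*$, the spider $S$ on $10$ vertices consisting of the centre $u$, four legs $u a_i b_i$ of length two ($i=1,2,3,4$), and one pendant edge $u a_5$ (equivalently, $S$ is obtained from $K_{1,5}$ by subdividing four of its edges once). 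The key step is the evaluation $\rho(S)=1+\sqrt 2$, which I would compute from the Perron eigenvector: by the symmetry of the four length-two legs it takes a common value $x$ on $b_1,\dots,b_4$, a common value $y$ on $a_1,\dots,a_4$, a value $z$ on $u$, and a value $w$ on $a_5$. The eigenvalue equations $\lambda x=y$, $\lambda y=z+x$, $\lambda w=z$, and $\lambda z=4y+w$ reduce, after eliminating $x,w,z$, to $(\lambda^2-1)^2=4\lambda^2$, whence $\lambda^2-2\lambda-1=0$ and $\rho(S)=1+\sqrt 2$. Since $n\ge 11>10$, $S$ is a \emph{proper} subgraph of $T^*$, so Lemma~\ref{subgraph} gives $\rho(T^*)>\rho(S)=1+\sqrt 2$, contradicting Lemma~\ref{upbound}. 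Therefore $\Delta(T^*)\le 4$, which together with the lower bound completes the proof.

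The main obstacle is locating the right forbidden configuration, since the bound $\rho(T^*)<1+\sqrt 2$ does not by itself rule out a degree-$5$ vertex: a bare star $K_{1,5}$ has spectral radius only $\sqrt 5<1+\sqrt 2$. One must therefore use Lemma~\ref{leave-number} to force four legs of length two, and then recognise that the resulting spider has spectral radius \emph{exactly} the threshold $1+\sqrt 2$, so that the strict monotonicity in Lemma~\ref{subgraph} yields the contradiction. Verifying that the four legs and the fifth neighbour genuinely exist and are pairwise disjoint — which hinges on $T^*$ being a tree with at most one leaf per vertex — is the only point requiring care.
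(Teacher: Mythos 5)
Your proposal is correct and follows essentially the same route as the paper: the identical path/domination-number argument for $\Delta(T^*)\ge 3$, and for the upper bound the same contradiction via Lemmas \ref{leave-number}, \ref{subgraph} and \ref{upbound}, since your spider $S$ (four legs of length two plus one pendant edge at a degree-$5$ vertex) is exactly the paper's $S_{10}$. The only difference is that you explicitly verify $\rho(S_{10})=1+\sqrt{2}$ and the disjointness of the legs, details the paper simply asserts.
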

\begin{proof}
Since $T^*$ is a tree, we have $\Delta(T^*)\ge 2$.
If $\Delta(T^*)=2$ then $T^*\cong P_n$.
Note that $n \ge 11$, we have $\gamma(T^*)=\gamma(P_n)=\lceil\frac{n}{3}\rceil< \lfloor\frac{n}{2}\rfloor$, a contradiction.
Thus,  $\Delta(T^*)\ge 3$.

Suppose to the contrary that $\Delta(T^*)\ge 5$.
From Lemma \ref{leave-number},
$S_{10}$ (see Fig.\ref{W-S}) is a proper subgraph of $T^*$.
By Lemma \ref{subgraph}, we have
$\rho(T^*)>\rho(S_{10})=1+\sqrt{2}$, which contradicts Lemma \ref{upbound}.
Thus,  $\Delta(T^*)\leq4$.
\end{proof}

\begin{lem}\label{branching-vertex}
For odd $n \ge 13$,
let $T^*$ be the minimizer graph in $\mathbb{G}_{n,\lfloor\frac{n}{2}\rfloor}$.
Then $T^*$ contains at least two branching vertices.
\end{lem}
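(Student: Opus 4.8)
The plan is to argue by contradiction: suppose that $T^*$ has at most one branching vertex. By Lemma \ref{max-degree-less-than4} we have $\Delta(T^*)\ge 3$, so $T^*$ has at least one branching vertex; hence under this assumption $T^*$ has \emph{exactly} one branching vertex, say $u$. Since every vertex other than $u$ has degree at most $2$, deleting $u$ leaves a disjoint union of paths, each joined to $u$ at a single vertex. In other words, $T^*$ is a starlike tree with center $u$. Moreover $d_{T^*}(u)=\Delta(T^*)\le 4$ by Lemma \ref{max-degree-less-than4}, so $u$ has at most four legs.

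Next I would invoke Lemma \ref{starlike-tree-domination-number}. Writing $T^*-u=P_{3a_1+2}\cup\cdots\cup P_{3a_s+2}\cup P_{3b_1+1}\cup\cdots\cup P_{3b_t+1}\cup P_{3c_1}\cup\cdots\cup P_{3c_h}$, the number of legs equals the degree of the center, so $s+t+h=d_{T^*}(u)\le 4$; in particular $s\le 4$ and $s\le 4-t$. Since $n$ is odd we have $\gamma(T^*)=\frac{n-1}{2}$, and I would equate this value with each branch of the formula of Lemma \ref{starlike-tree-domination-number}.

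If $t=0$, then $\frac{n+s-1}{3}=\frac{n-1}{2}$ forces $n=2s+1$, whence $n\le 2\cdot 4+1=9$. If $t\ge 1$, then $\frac{n+s-t+2}{3}=\frac{n-1}{2}$ forces $n=2(s-t)+7$; using $s\le 4-t$ together with $t\ge 1$ gives $s-t\le 4-2t\le 2$, so $n\le 2\cdot 2+7=11$. In either case $n\le 11$, contradicting $n\ge 13$. Therefore $T^*$ must contain at least two branching vertices.

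The argument is essentially a bookkeeping computation once the starlike structure is recognized; the only genuine content is the reduction to a spider, which is immediate from $\Delta(T^*)\le 4$ and the hypothesis of a unique branching vertex, after which the domination-number formula does the rest. The step most likely to need care is identifying the center degree with $s+t+h$ and bounding it by $\Delta(T^*)\le 4$, since it is precisely this degree bound (rather than any leaf restriction from Lemma \ref{leave-number}) that drives the final inequality $n\le 11$.
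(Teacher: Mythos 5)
Your proposal is correct and follows essentially the same route as the paper: assume a unique branching vertex, observe that $T^*$ is then a starlike tree whose center degree is bounded by $4$ via Lemma \ref{max-degree-less-than4}, and apply Lemma \ref{starlike-tree-domination-number} to reach a contradiction with $\gamma(T^*)=\frac{n-1}{2}$. The only difference is cosmetic: the paper bounds $\gamma(T^*)<\frac{n-1}{2}$ directly for $n\ge 13$, while you solve the equation for $n$ and conclude $n\le 11$ — algebraically the same computation.
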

\begin{proof}
By Lemma \ref{max-degree-less-than4}, $T^*$ contains at least one branching vertex.
Suppose to the contrary that
$T^*$ contains exactly one branching vertex,
i.e.,  $T^*$ is a starlike tree.
Let
$T^*-u=
P_{3a_1+2}\cup \cdots \cup P_{3a_s+2}
\cup P_{3b_1+1} \cup \cdots \cup P_{3b_t+1}
\cup P_{3c_1}\cup \cdots \cup P_{3c_h}$,
where $u$ is the center vertex of $T^*$ and $s,t,h\ge0$.
By Lemma \ref{max-degree-less-than4}, we have $s+t+h\le 4$,
and so $s\le 4$, $s+t\le 4$.
 Since $n\ge13$, from Lemma \ref{starlike-tree-domination-number} we have
$$\gamma(T^*)=\left\{  \begin{array}{ll}
\frac{n+s-1}{3}\le \frac{n+3}{3}< \frac{n-1}{2},& \mbox{ if $t=0$,}\vspace{0.1cm}\\
\frac{n+s-t+2}{3}=\frac{n+s+t-2t+2}{3}\le \frac{n+4-2+2}{3}< \frac{n-1}{2},& \mbox{ if $t\ge 1$,}
\end{array}\right.$$
 which contradicts  $\gamma(T^*)=\frac{n-1}{2}$.
It completes the proof.
\end{proof}

\begin{lem}\label{max-degree}
For odd $n \ge 13$,
let $T^*$ be the minimizer graph in $\mathbb{G}_{n,\lfloor\frac{n}{2}\rfloor}$.
Then $\Delta(T^*)=3$.
\end{lem}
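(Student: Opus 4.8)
The plan is to combine Lemma~\ref{max-degree-less-than4}, which already gives $3\le\Delta(T^*)\le 4$, with a contradiction argument that rules out the value $4$. So I would suppose some vertex $u$ has $d_{T^*}(u)=4$ and construct a graph $T\in\mathbb{G}_{n,\lfloor n/2\rfloor}$ with $\rho(T)<\rho(T^*)$; this violates the minimality of $T^*$ and forces $\Delta(T^*)=3$. The first task is to expose enough ``loose'' structure at a degree-$4$ vertex to feed into the path-shifting lemma. By Lemma~\ref{branching-vertex} the branching vertices of $T^*$ span a nontrivial subtree, and I would take $u$ to be a degree-$4$ vertex carrying at least two pendant-path branches: a degree-$4$ vertex at an extremity of that subtree has only one branch reaching another branching vertex, hence three pendant paths, and if every degree-$4$ vertex were internal a short separate argument on the pendant paths nearest $u$ would apply. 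Writing the pendant paths at $u$ as having $q_1\le q_2\le q_3$ vertices, Lemma~\ref{leave-number} guarantees at most one neighbour of $u$ is a leaf, so $q_2,q_3\ge 2$.

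With two pendant paths of length at least $2$ hanging at $u$, I would apply Lemma~\ref{lahuan} repeatedly: shifting one vertex at a time from the shorter branch to the longer one strictly decreases $\rho$ at every step, and after the shorter branch is emptied the degree of $u$ drops from $4$ to $3$. Equivalently, one detaches the shorter pendant path and grafts it onto the end of the longer one to obtain $T$. The number of vertices is unchanged, so the only quantity to control is the domination number: I must arrange $\gamma(T)=\frac{n-1}{2}$, for only then is $T\in\mathbb{G}_{n,\lfloor n/2\rfloor}$ and the minimality of $T^*$ contradicted.

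The domination bookkeeping is the heart of the matter, and it is governed by residues modulo $3$. The domination contribution of a pendant path of $\ell$ vertices depends only on $\ell\bmod 3$ (compare Lemma~\ref{starlike-tree-domination-number}), and merging a path of length $a$ with one of length $b$ alters the total by an amount that vanishes exactly when the residues are compatible. I would therefore not fix the pair of branches in advance: among the pendant paths at $u$ I would select the pair whose merger leaves $\gamma$ unchanged, reading off the count from a favourable minimum dominating set. Here the earlier lemmas are exactly the right instruments: Lemma~\ref{dominating-set-support} lets me assume all support vertices lie in the dominating set; Lemma~\ref{P3-exist} frees a pendant $3$-path so that its vertex $w''$ is not uniquely dominated, allowing the path to be moved without creating or destroying a dominator; and Lemma~\ref{half-domination} supplies the complementary minimum dominating set on the even-order piece left after deleting a pendant $3$-path. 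Combining these, I can certify $\gamma(T)=\frac{n-1}{2}$ after the shift. Should the three residues conspire so that no admissible merge exists, a Hoffman--Smith subdivision (Lemma~\ref{subdivision}) on an internal path incident to $u$, compensated by a leaf deletion, provides a backup $\rho$-decreasing and $\gamma$-preserving move.

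The step I expect to be the main obstacle is precisely this invariance of the domination number: a naive path-merge can change $\gamma$ by $\pm 1$, so the whole argument hinges on showing that at a degree-$4$ vertex there is always an admissible pair of branches, equivalently an admissible residue pattern, for which the merge is domination-neutral. Once that case analysis on residues modulo $3$ is carried out --- with the boundary cases $q_i\in\{1,2\}$ and the possible presence of the single ``bad'' pendant $3$-path allowed by Lemma~\ref{P3} treated separately --- Lemma~\ref{lahuan} delivers $\rho(T)<\rho(T^*)$, completing the contradiction and yielding $\Delta(T^*)=3$.
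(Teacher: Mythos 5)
Your reduction via Lemma~\ref{max-degree-less-than4} matches the paper, but the engine of your argument --- merging two pendant paths at a degree-$4$ vertex $u$ by Lemma~\ref{lahuan}, with the pair chosen so that the merge is domination-neutral --- fails at exactly the step you yourself flag as the heart of the matter, and it cannot be repaired: an admissible pair need not exist. Concretely, let $T_0$ be the tree on $13$ vertices consisting of a vertex $u$ with a pendant leaf $x$, two pendant paths $a_1b_1$ and $a_2b_2$ of two vertices each, and a fourth branch $ue_1e_2e_3e_4e_5$ with extra leaves $f_4$ at $e_4$ and $f_5$ at $e_5$. The five pairs $\{x,u\},\{b_1,a_1\},\{b_2,a_2\},\{f_4,e_4\},\{f_5,e_5\}$ are disjoint, each must contain a dominator, and none of their members dominates $e_2$, so $\gamma(T_0)=6=\frac{n-1}{2}$; moreover $T_0$ satisfies every structural constraint available at this stage (Lemmas~\ref{leave-number}, \ref{P3}, \ref{branching-vertex}). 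Yet every merge of two pendant paths at $u$ drops $\gamma$ to $5$: merging the two $P_2$'s creates a pendant $P_4$, after which $\{u,a_2,e_2,e_4,e_5\}$ dominates everything (the vertex $u$ now covers $x$, $a_1$, $e_1$ while $a_2$ covers $b_1,b_2$), and merging $x$ onto $a_1b_1$ creates a pendant $P_3$, after which $\{b_1,a_2,e_2,e_4,e_5\}$ dominates everything. In your residue language the branch sizes are $1,2,2 \pmod 3$ and both possible merges ($2+2$ and $1+2$) lose a dominator. Note this is not a bookkeeping problem that Lemmas~\ref{dominating-set-support}, \ref{half-domination} or \ref{P3-exist} can fix: the merged tree genuinely has $\gamma=5$, so it simply is not in $\mathbb{G}_{n,\lfloor n/2\rfloor}$ and gives no contradiction.

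There is a second, smaller gap: you never establish that some degree-$4$ vertex has two pendant-path branches of length at least $2$. If every extremal vertex of the branching subtree has degree $3$, a degree-$4$ vertex may be internal, with two or more branches containing further branching vertices and possibly no pendant path at all; your ``short separate argument'' for that case is not supplied. Your fallback move --- subdividing an edge on an internal path at $u$ and compensating by a deletion --- is in fact the paper's actual proof, but asserting that it is ``$\gamma$-preserving'' is precisely what must be shown, and that is where all the work lies: the paper splits into $l_{T^*}(u)=1$ and $l_{T^*}(u)=0$, and in each case re-engineers a minimum dominating set (swapping it on an even-order component via Lemma~\ref{half-domination}, and selecting, via Lemma~\ref{P3}, a pendant $3$-path $ww'w''$ whose top vertex $w''$ is not uniquely dominated) so that after subdividing $uv_1$ once and deleting the leaf at $u$, or subdividing $uv_1$ twice and deleting $w,w'$, the modified set is still a dominating set of the same cardinality; only then do Lemmas~\ref{lahuan}, \ref{subdivision} and \ref{subgraph} yield $\rho<\rho(T^*)$. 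In particular, on the tree $T_0$ above the paper's surgery (subdivide $ue_1$, delete $x$) works immediately, whereas your primary move never does.
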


\begin{proof}
It is sufficient to prove $\Delta(T^*)\neq 4$ from Lemma \ref{max-degree-less-than4}.
Suppose  that $u$ is a  vertex of $T^*$ satisfying $d_{T^*}(u)=\Delta(T^*)=4$.
Let  $N_{T^*}(u)= \{v_1, v_2, v_3, v_4\}$
and
$T^*-u=\cup_{i=1}^4 T_{v_i}$, where $T_{v_i}$ is the component containing vertex $v_i$ for $1\le i\le 4$.
Let $D^*$ be  a minimum dominating set of $T^*$ containing all support vertices.
By Lemma \ref{leave-number} we have $l_{T^*}(u)\le 1$.
Next we will distinguish the following two cases to lead a contradiction.

{\flushleft\bf Case 1 } $l_{T^*}(u)= 1$.

We may denote $L_{T^*}(u)= \{v_4\}$.
Clearly, $u\in D^*$ and
$v_4$ is uniquely dominated by $u$ relative to   $D^*$.
Recall that $\gamma(T^*)=\frac{n-1}{2}$.
Then there exist at most two vertices uniquely dominated by $u$ relative to   $D^*$.

{\flushleft\bf Subcase 1.1 }
Assume that  one of $v_1, v_2, v_3$ is uniquely dominated by $u$ relative to   $D^*$.
Without loss of generality assume that $v_1$ is  such vertex. Clearly, $v_1\notin D^*$.
Let $G_1$ be the tree obtained from $T^*$ by subdividing the edge $uv_1$ once,
say $u'$ the new vertex,
and deleting the leaf $v_4$ (see Fig.\ref{case1}).
Denote $D_1=D^*-\{u\}+\{u'\}$.
It is easy to verify that $D_1$ is a minimum dominating set of $G_1$ and $|D_1|=|D^*|$.
Thus, $G_1 \in \mathbb{G}_{n,\lfloor\frac{n}{2}\rfloor}$.
If the component $T_{v_1}$ is a path, by Lemma \ref{lahuan}
then  $\rho(G_1)< \rho(T^*)$, a contradiction.
If $T_{v_1}$ is not a path, then
$uv_1$ is an edge on some internal path of $T^*$.
By Lemmas \ref{subdivision} and \ref{subgraph},
we have $\rho(G_1)< \rho(T^*)$, a contradiction.

\begin{figure}[h]
  \centering
    \footnotesize
\unitlength 0.8mm 
\linethickness{0.4pt}
\ifx\plotpoint\undefined\newsavebox{\plotpoint}\fi 
\begin{picture}(193,25)(0,0)
\put(144,23){\circle*{2}}
\put(144,23){\line(-5,-3){15}}
\put(129,13){\circle{2}}
\put(129,12){\line(-2,-5){2}}
\put(129,12){\line(2,-5){2}}
\multiput(144,23)(-.033557047,-.060402685){149}{\line(0,-1){.060402685}}
\multiput(144,23)(.033613445,-.075630252){119}{\line(0,-1){.075630252}}
\put(144,23){\line(4,-3){12}}
\put(156,13){\circle{2}}
\put(139,12){\line(-2,-5){2}}
\put(139,12){\line(2,-5){2}}
\put(148,12){\line(-2,-5){2}}
\put(148,12){\line(2,-5){2}}
\put(144,25){$u$}
\put(124,13){$v_1$}
\put(134,13){$v_2$}
\put(143,13){$v_3$}
\put(151,13){$v_4$}
\put(172,13){\circle{2}}
\put(172,12){\line(-2,-5){2}}
\put(172,12){\line(2,-5){2}}
\put(182,12){\line(-2,-5){2}}
\put(182,12){\line(2,-5){2}}
\put(191,12){\line(-2,-5){2}}
\put(191,12){\line(2,-5){2}}
\put(187,25){$u$}
\put(175,18){$u'$}
\put(167,13){$v_1$}
\put(177,13){$v_2$}
\put(186,13){$v_3$}
\put(187,23){\circle{2}}
\put(187,22){\line(1,-2){4}}
\multiput(187,22)(-.033557047,-.053691275){149}{\line(0,-1){.053691275}}
\put(187,22){\line(-2,-1){8}}
\put(179,18){\circle*{2}}
\multiput(179,18)(-.058823529,-.033613445){119}{\line(-1,0){.058823529}}
\put(160,13){\vector(1,0){5}}
\put(122,0){$T^*$\!\! (\!the \!dominating \!set\! $D^*_1$\!)}
\put(178,0){$G_1$}
\put(139,13){\circle*{2}}
\put(148,13){\circle*{2}}
\put(182,13){\circle*{2}}
\put(191,13){\circle*{2}}
\put(102,23){\circle*{2}}
\put(102,23){\line(-5,-3){15}}
\put(87,12){\line(-2,-5){2}}
\put(87,12){\line(2,-5){2}}
\multiput(102,23)(-.033557047,-.060402685){149}{\line(0,-1){.060402685}}
\multiput(102,23)(.033613445,-.075630252){119}{\line(0,-1){.075630252}}
\put(102,23){\line(4,-3){12}}
\put(114,13){\circle{2}}
\put(97,12){\line(-2,-5){2}}
\put(97,12){\line(2,-5){2}}
\put(106,12){\line(-2,-5){2}}
\put(106,12){\line(2,-5){2}}
\put(102,25){$u$}
\put(82,13){$v_1$}
\put(92,13){$v_2$}
\put(101,13){$v_3$}
\put(109,13){$v_4$}
\put(118,13){\vector(1,0){5}}
\put(78,0){$T^*$ in Subcase 1.2(ii)}
\put(97,13){\circle*{2}}
\put(106,13){\circle*{2}}
\put(87,13){\circle*{2}}
\multiput(125.93,14.93)(0,-.90909){12}{{\rule{.4pt}{.4pt}}}
\multiput(125.93,4.93)(.85714,0){8}{{\rule{.4pt}{.4pt}}}
\multiput(125.93,14.93)(.85714,0){8}{{\rule{.4pt}{.4pt}}}
\multiput(131.93,14.93)(0,-.90909){12}{{\rule{.4pt}{.4pt}}}
\multiput(83.93,14.93)(0,-.90909){12}{{\rule{.4pt}{.4pt}}}
\multiput(83.93,4.93)(.85714,0){8}{{\rule{.4pt}{.4pt}}}
\multiput(83.93,14.93)(.85714,0){8}{{\rule{.4pt}{.4pt}}}
\multiput(89.93,14.93)(0,-.90909){12}{{\rule{.4pt}{.4pt}}}
\put(20,23){\circle*{2}}
\put(20,23){\line(-5,-3){15}}
\put(5,13){\circle{2}}
\put(5,12){\line(-2,-5){2}}
\put(5,12){\line(2,-5){2}}
\multiput(20,23)(-.033557047,-.060402685){149}{\line(0,-1){.060402685}}
\multiput(20,23)(.033613445,-.075630252){119}{\line(0,-1){.075630252}}
\put(20,23){\line(4,-3){12}}
\put(32,13){\circle{2}}
\put(15,13){\circle{2}}
\put(15,12){\line(-2,-5){2}}
\put(15,12){\line(2,-5){2}}
\put(24,13){\circle{2}}
\put(24,12){\line(-2,-5){2}}
\put(24,12){\line(2,-5){2}}
\put(14,13){\line(1,0){2}}
\put(15,14){\line(0,-1){2}}
\put(23,13){\line(1,0){2}}
\put(24,14){\line(0,-1){2}}
\put(20,25){$u$}
\put(0,13){$v_1$}
\put(10,13){$v_2$}
\put(19,13){$v_3$}
\put(27,13){$v_4$}
\put(48,13){\circle{2}}
\put(48,12){\line(-2,-5){2}}
\put(48,12){\line(2,-5){2}}
\put(58,13){\circle{2}}
\put(58,12){\line(-2,-5){2}}
\put(58,12){\line(2,-5){2}}
\put(67,13){\circle{2}}
\put(67,12){\line(-2,-5){2}}
\put(67,12){\line(2,-5){2}}
\put(57,13){\line(1,0){2}}
\put(58,14){\line(0,-1){2}}
\put(66,13){\line(1,0){2}}
\put(67,14){\line(0,-1){2}}
\put(63,25){$u$}
\put(51,18){$u'$}
\put(43,13){$v_1$}
\put(53,13){$v_2$}
\put(62,13){$v_3$}
\put(63,23){\circle{2}}
\put(63,22){\line(1,-2){4}}
\multiput(63,22)(-.033557047,-.053691275){149}{\line(0,-1){.053691275}}
\put(63,22){\line(-2,-1){8}}
\put(55,18){\circle*{2}}
\multiput(55,18)(-.058823529,-.033613445){119}{\line(-1,0){.058823529}}
\put(36,13){\vector(1,0){5}}
\put(-2,0){$T^*$ in Subcase 1.1(1.2(i))}
\put(54,0){$G_1$}
\end{picture}
  \caption{ \footnotesize Graphs $T^*$ and $G_1$ in Case 1 (the black vertex represents the dominating vertex, the white vertex represents the non-dominating vertex, and the other vertex may be the dominating vertex or not). }\label{case1}
\end{figure}
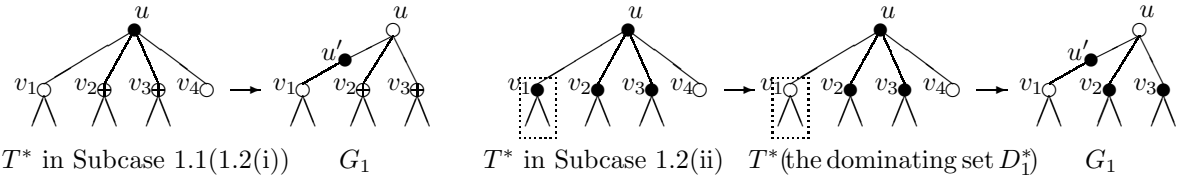
{\flushleft\bf Subcase 1.2 }
Assume that  $v_1, v_2, v_3$ are not uniquely dominated by $u$ relative to    $D^*$.
Then  $v_1, v_2, v_3$ belong to $D^*$ or not.

(i) Assume that  there exists one vertex, say $v_1$,  in $\{v_1, v_2, v_3\}$ such that $v_1\notin D^*$.
As similar as the Subcase 1.1,  we can construct the tree $G_1$ (see Fig.\ref{case1}). Recall that $G_1 \in \mathbb{G}_{n,\lfloor\frac{n}{2}\rfloor}$ and
 $\rho(G_1)< \rho(T^*)$, a contradiction.

(ii) Assume that $v_1, v_2, v_3\in D^*$.
Let $n(T_{v_i})=n_i$ for $1\le i\le 4$. Clearly $n=2+n_1+n_2+n_3$.
We claim that
at most one of $n_1,n_2,n_3$ is an odd integer,
since otherwise $\gamma(T_{v_i})\le \frac{n_i-1}{2}$  ($i=1,2,3$) from Lemma \ref{domination-number-upbound}, and so $\gamma(T^*)=\frac{n_1-1}{2}+\frac{n_2-1}{2}
+\frac{n_3-1}{2}+1=\frac{n-3}{2}<\frac{n-1}{2}$, a contradiction.
Without loss of generality assume that $n_1$ is even.
Since   $u, v_1\in D^*$ and $\gamma(T^*)=\frac{n-1}{2}$,
 $V(T_{v_1})\cap D^* $ is a
minimum dominating set of $T_{v_1}$ and $\gamma(T_{v_1})=\frac{n_1}{2}$.
By Lemma \ref{half-domination},
$V(T_{v_1}) \setminus D^*$ is also a
minimum dominating set of $T_{v_1}$.
Let the vertex set
$$D^*_1= [V(T_{v_1}) \setminus D^*]\bigcup
[V(T^*-T_{v_1})\cap D^*]
\mbox{ (see Fig.\ref{case1}).}$$
It is clear that $D^*_1$ is a minimum dominating set of $T^*$ (note that $D^*_1$ does not contain all support vertices) and $|D^*_1|=\gamma(T^*)$.
One can see  $u\in D^*_1$ and $v_1\notin D^*_1$.
Now we can also construct the tree $G_1$ obtained from $T^*$ by subdividing the edge $uv_1$ once,
say $u'$ the new vertex,
and deleting the leaf $v_4$ (see Fig.\ref{case1}).
In this situation,
$D'_1=D^*_1-\{u\}+\{u'\}$ is a minimum dominating set of $G_1$  and $|D'_1|=\gamma(T^*)$.
Thus,  $G_1\in \mathbb{G}_{n,\lfloor\frac{n}{2}\rfloor}$ and  $\rho(G_1)< \rho(T^*)$, which is a contradiction.

{\flushleft\bf Case 2 } $l_{T^*}(u)= 0$.

By Lemma \ref{branching-vertex},  we may assume that
$uv_1$ is an edge on some internal path of $T^*$.
One can see that $u$, $v_1$ may belong to $D^*$ or not.
Next we will discuss the following three cases.

{\flushleft\bf Subcase 2.1 } $u\in D^*$ and $v_1\in D^*$.

In this subcase,  $V(T_{v_1})\cap D^*$ and
$V(T^*-T_{v_1})\cap D^*$ is a minimum dominating set of
$T_{v_1}$ and $T^*-T_{v_1}$, respectively. Denote by $n_0=n(T^*-T_{v_1})$ and recall that $n_1=n(T_{v_1})$.

(i)
If $n_0$ is even, then $n_1=n-n_0$ is odd.
By Lemma \ref{domination-number-upbound}, we have $\gamma(T_{v_1})\leq\frac{n_1-1}{2}$ and $\gamma(T^*-T_{v_1})\leq \frac{n_0}{2}$.
Note that $\gamma(T^*)=\frac{n-1}{2}=\frac{n_0+n_1-1}{2}$.
This implies that $\gamma(T^*-T_{v_1})=\frac{n_0}{2}$.
By Lemma \ref{half-domination}, we have
$V(T^*-T_{v_1})\setminus D^*$ is a minimum dominating set
of $T^*-T_{v_1}$,
and so
$$D^*_2= [V(T_{v_1})\cap D^*]\bigcup [V(T^*-T_{v_1})\setminus D^*] \mbox{ (see Fig.\ref{case2-1i})}$$
is a minimum dominating set
of $T^*$.
Since $uv_1$ is an edge on some internal path of $T^*$, the component $T_{v_1}$ has a vertex of degree at least $3$ in $T^*$.
Then $T_{v_1}$ contains at least two pendant $3$-paths.
From  Lemma \ref{P3} there exists at least one pendant $3$-path, say $w w'w''$, of $T^*$
with  $d_{T^*}(w)=1$, $d_{T^*}(w')=2$ and $d_{T^*}(w'')\ge2$  such that $w''$ is not uniquely dominated by $w'$ relative to $D^*$ (or $D^*_2$).
Let $G_2$ be the tree obtained from $T^*$ by
subdividing the edge $uv_1$ twice (say $w_1, w_2$ the two inserting vertices), and deleting two vertices $w$ and $w'$ (see Fig.\ref{case2-1i}).
Clearly,
$D_2=D^*_2-\{w'\}+\{w_1\}$ is a minimum dominating set of $G_2$.
Thus, $G_2\in \mathbb{G}_{n,\lfloor\frac{n}{2}\rfloor}$.
By Lemmas \ref{subdivision} and \ref{subgraph},
we have $\rho(G_2)< \rho(T^*)$, a contradiction.
\begin{figure}[h]
  \centering
  \footnotesize
\unitlength 0.9mm 
\linethickness{0.4pt}
\ifx\plotpoint\undefined\newsavebox{\plotpoint}\fi 
\begin{picture}(156,42)(0,0)
\put(5,29){\line(-2,-5){2}}
\put(5,29){\line(2,-5){2}}
\put(15,30){\circle{2}}
\put(15,29){\line(-2,-5){2}}
\put(15,29){\line(2,-5){2}}
\put(24,30){\circle{2}}
\put(24,29){\line(-2,-5){2}}
\put(24,29){\line(2,-5){2}}
\put(14,30){\line(1,0){2}}
\put(15,31){\line(0,-1){2}}
\put(23,30){\line(1,0){2}}
\put(24,31){\line(0,-1){2}}
\put(20,42){$u$}
\put(0,30){$v_1$}
\put(10,30){$v_2$}
\put(19,30){$v_3$}
\put(27,30){$v_4$}
\put(32,30){\circle{2}}
\put(32,29){\line(-2,-5){2}}
\put(32,29){\line(2,-5){2}}
\put(31,30){\line(1,0){2}}
\put(32,31){\line(0,-1){2}}
\multiput(20,39)(-.06302521,-.033613445){238}{\line(-1,0){.06302521}}
\multiput(20,39)(-.033557047,-.053691275){149}{\line(0,-1){.053691275}}
\put(20,39){\line(1,-2){4}}
\put(20,39){\line(3,-2){12}}
\put(2,0){$T^*$ in Subcase 2.1(i)}
\put(20,40){\circle*{2}}
\put(5,30){\circle*{2}}
\multiput(4.93,28.93)(0,-.85714){8}{{\rule{.4pt}{.4pt}}}
\put(5,22){\circle{2}}
\put(5,21){\line(0,-1){4}}
\put(5,16){\circle*{2}}
\put(5,15){\line(0,-1){4}}
\put(5,10){\circle{2}}
\put(5,21){\line(-1,-1){4}}
\put(7,9){$w$}
\put(7,15){$w'$}
\put(7,21){$w''$}
\multiput(5,21)(-.033613445,-.058823529){119}{\line(0,-1){.058823529}}
\put(4,22){\line(1,0){2}}
\put(5,23){\line(0,-1){2}}
\put(62,29){\line(-2,-5){2}}
\put(62,29){\line(2,-5){2}}
\put(72,30){\circle{2}}
\put(72,29){\line(-2,-5){2}}
\put(72,29){\line(2,-5){2}}
\put(81,30){\circle{2}}
\put(81,29){\line(-2,-5){2}}
\put(81,29){\line(2,-5){2}}
\put(71,30){\line(1,0){2}}
\put(72,31){\line(0,-1){2}}
\put(80,30){\line(1,0){2}}
\put(81,31){\line(0,-1){2}}
\put(77,42){$u$}
\put(57,30){$v_1$}
\put(67,30){$v_2$}
\put(76,30){$v_3$}
\put(84,30){$v_4$}
\put(89,30){\circle{2}}
\put(89,29){\line(-2,-5){2}}
\put(89,29){\line(2,-5){2}}
\put(88,30){\line(1,0){2}}
\put(89,31){\line(0,-1){2}}
\multiput(77,39)(-.06302521,-.033613445){238}{\line(-1,0){.06302521}}
\multiput(77,39)(-.033557047,-.053691275){149}{\line(0,-1){.053691275}}
\put(77,39){\line(1,-2){4}}
\put(77,39){\line(3,-2){12}}
\put(59,0){$T^*$ (the dominating set $D^*_2$)}
\put(62,30){\circle*{2}}
\multiput(61.93,28.93)(0,-.85714){8}{{\rule{.4pt}{.4pt}}}
\put(62,22){\circle{2}}
\put(62,21){\line(0,-1){4}}
\put(62,16){\circle*{2}}
\put(62,15){\line(0,-1){4}}
\put(62,10){\circle{2}}
\put(62,21){\line(-1,-1){4}}
\put(64,9){$w$}
\put(64,15){$w'$}
\put(64,21){$w''$}
\multiput(62,21)(-.033613445,-.058823529){119}{\line(0,-1){.058823529}}
\put(61,22){\line(1,0){2}}
\put(62,23){\line(0,-1){2}}
\put(77,40){\circle{2}}
\put(43,29){\vector(1,0){5}}
\put(99,29){\vector(1,0){5}}
\put(137,30){\circle{2}}
\put(137,29){\line(-2,-5){2}}
\put(137,29){\line(2,-5){2}}
\put(146,30){\circle{2}}
\put(146,29){\line(-2,-5){2}}
\put(146,29){\line(2,-5){2}}
\put(136,30){\line(1,0){2}}
\put(137,31){\line(0,-1){2}}
\put(145,30){\line(1,0){2}}
\put(146,31){\line(0,-1){2}}
\put(142,42){$u$}
\put(132,30){$v_2$}
\put(141,30){$v_3$}
\put(149,30){$v_4$}
\put(154,30){\circle{2}}
\put(154,29){\line(-2,-5){2}}
\put(154,29){\line(2,-5){2}}
\put(153,30){\line(1,0){2}}
\put(154,31){\line(0,-1){2}}
\put(124,29){\line(-2,-5){2}}
\put(124,29){\line(2,-5){2}}
\put(119,30){$v_1$}
\put(130,37){$w_1$}
\put(124,34){$w_2$}
\put(142,40){\circle{2}}
\multiput(142,39)(-.033557047,-.053691275){149}{\line(0,-1){.053691275}}
\put(142,39){\line(1,-2){4}}
\put(142,39){\line(3,-2){12}}
\put(130,33){\circle{2}}
\put(135,0){$G_2$}
\put(137,37){\circle*{2}}
\put(137,37){\line(-2,-1){6}}
\multiput(141,39)(-.06666667,-.03333333){60}{\line(-1,0){.06666667}}
\multiput(123.93,28.93)(0,-.85714){8}{{\rule{.4pt}{.4pt}}}
\put(124,22){\circle{2}}
\put(124,21){\line(-1,-1){4}}
\put(126,21){$w''$}
\multiput(124,21)(-.033613445,-.058823529){119}{\line(0,-1){.058823529}}
\put(123,22){\line(1,0){2}}
\put(124,23){\line(0,-1){2}}
\put(124,30){\circle*{2}}
\put(129,33){\line(-5,-3){5}}
\end{picture}
  \caption{ \footnotesize Graphs $T^*$ and $G_2$ in Subcase 2.1(i).}\label{case2-1i}
\end{figure}
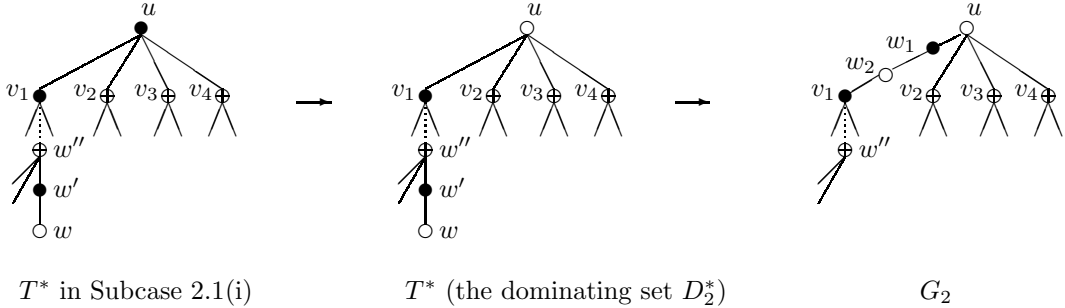

(ii)
If $n_0$ is odd, then  $n_1$ is even and also $\gamma(T_{v_1})=\frac{n_1}{2}$ since $\gamma(T^*)=\frac{n_0+n_1-1}{2}$, $\gamma(T_{v_1})\leq\frac{n_1}{2}$ and $\gamma(T^*-T_{v_1})\leq \frac{n_0-1}{2}$.
By Lemma \ref{half-domination}, we have
$V(T_{v_1})\setminus D^*$ is a minimum dominating set
of $T_{v_1}$,
and so
$$D^*_3= [V(T_{v_1})\setminus D^*]\bigcup [V(T^*-T_{v_1})\cap D^*] \mbox{ (see Fig.\ref{case2-1ii})}$$
is a minimum dominating set
of $T^*$.
Note that $T_{v_i}\not=K_1$. There exists a pendant $3$-path in   $T^*[V(T_{v_i})\cup \{u\}]$ for any $i=1,2,3,4$, and so
 $T^*$ has at least four  pendant $3$-paths.
By Lemma \ref{P3}, at least one of
$T^*[V(T_{v_2})\cup\{u\}]$, $T^*[V(T_{v_3})\cup\{u\}]$ and $T^*[V(T_{v_4})\cup\{u\}]$ contains a pendant $3$-path $w w'w''$ with $d_{T^*}(w)=1$, $d_{T^*}(w')=2$ and $d_{T^*}(w'')\ge2$
such that   $w''$ is not uniquely dominated by $w'$ relative to $D^*$ (or $D^*_3$).
Without loss of generality, we may assume that  $T^*[V(T_{v_2})\cup\{u\}]$ contains such a  $3$-pendant path, say $ww' w''$.
Let $G_3$ be the tree obtained from $T^*$ by
subdividing the edge $uv_1$ twice (the two new vertices, say $w_1$, $w_2$)  and deleting the vertices $w$, $w'$ (see Fig.\ref{case2-1ii}).
Clearly,
$D_3=D^*_3-\{w'\}+\{w_2\}$ is a minimum dominating set of $G_2$.
Thus, $G_3\in \mathbb{G}_{n,\lfloor\frac{n}{2}\rfloor}$.
By Lemmas \ref{subdivision} and \ref{subgraph},
we have $\rho(G_3)< \rho(T^*)$, a contradiction.
\begin{figure}[h]
  \centering
   \footnotesize

\unitlength 0.9mm 
\linethickness{0.4pt}
\ifx\plotpoint\undefined\newsavebox{\plotpoint}\fi 
\begin{picture}(156,42)(0,0)
\put(5,29){\line(-2,-5){2}}
\put(5,29){\line(2,-5){2}}
\put(15,30){\circle{2}}
\put(15,29){\line(-2,-5){2}}
\put(15,29){\line(2,-5){2}}
\put(24,30){\circle{2}}
\put(24,29){\line(-2,-5){2}}
\put(24,29){\line(2,-5){2}}
\put(14,30){\line(1,0){2}}
\put(15,31){\line(0,-1){2}}
\put(23,30){\line(1,0){2}}
\put(24,31){\line(0,-1){2}}
\put(20,42){$u$}
\put(0,30){$v_1$}
\put(10,30){$v_2$}
\put(19,30){$v_3$}
\put(27,30){$v_4$}
\put(32,30){\circle{2}}
\put(32,29){\line(-2,-5){2}}
\put(32,29){\line(2,-5){2}}
\put(31,30){\line(1,0){2}}
\put(32,31){\line(0,-1){2}}
\multiput(20,39)(-.06302521,-.033613445){238}{\line(-1,0){.06302521}}
\multiput(20,39)(-.033557047,-.053691275){149}{\line(0,-1){.053691275}}
\put(20,39){\line(1,-2){4}}
\put(20,39){\line(3,-2){12}}
\put(2,0){$T^*$ in Subcase 2.1(ii)}
\put(20,40){\circle*{2}}
\put(5,30){\circle*{2}}
\put(62,29){\line(-2,-5){2}}
\put(62,29){\line(2,-5){2}}
\put(72,30){\circle{2}}
\put(72,29){\line(-2,-5){2}}
\put(72,29){\line(2,-5){2}}
\put(81,30){\circle{2}}
\put(81,29){\line(-2,-5){2}}
\put(81,29){\line(2,-5){2}}
\put(71,30){\line(1,0){2}}
\put(72,31){\line(0,-1){2}}
\put(80,30){\line(1,0){2}}
\put(81,31){\line(0,-1){2}}
\put(77,42){$u$}
\put(57,30){$v_1$}
\put(67,30){$v_2$}
\put(76,30){$v_3$}
\put(84,30){$v_4$}
\put(89,30){\circle{2}}
\put(89,29){\line(-2,-5){2}}
\put(89,29){\line(2,-5){2}}
\put(88,30){\line(1,0){2}}
\put(89,31){\line(0,-1){2}}
\multiput(77,39)(-.06302521,-.033613445){238}{\line(-1,0){.06302521}}
\multiput(77,39)(-.033557047,-.053691275){149}{\line(0,-1){.053691275}}
\put(77,39){\line(1,-2){4}}
\put(77,39){\line(3,-2){12}}
\put(59,0){$T^*$ (the dominating set $D^*_3$)}
\put(43,29){\vector(1,0){5}}
\put(99,29){\vector(1,0){5}}
\put(137,30){\circle{2}}
\put(137,29){\line(-2,-5){2}}
\put(137,29){\line(2,-5){2}}
\put(146,30){\circle{2}}
\put(146,29){\line(-2,-5){2}}
\put(146,29){\line(2,-5){2}}
\put(136,30){\line(1,0){2}}
\put(137,31){\line(0,-1){2}}
\put(145,30){\line(1,0){2}}
\put(146,31){\line(0,-1){2}}
\put(142,42){$u$}
\put(132,30){$v_2$}
\put(141,30){$v_3$}
\put(149,30){$v_4$}
\put(154,30){\circle{2}}
\put(154,29){\line(-2,-5){2}}
\put(154,29){\line(2,-5){2}}
\put(153,30){\line(1,0){2}}
\put(154,31){\line(0,-1){2}}
\put(124,29){\line(-2,-5){2}}
\put(124,29){\line(2,-5){2}}
\put(119,30){$v_1$}
\put(130,37){$w_1$}
\put(124,34){$w_2$}
\multiput(142,39)(-.033557047,-.053691275){149}{\line(0,-1){.053691275}}
\put(142,39){\line(1,-2){4}}
\put(142,39){\line(3,-2){12}}
\put(135,0){$G_3$}
\multiput(14.93,28.93)(0,-.85714){8}{{\rule{.4pt}{.4pt}}}
\put(15,22){\circle{2}}
\put(15,21){\line(0,-1){4}}
\put(15,16){\circle*{2}}
\put(15,15){\line(0,-1){4}}
\put(15,10){\circle{2}}
\put(15,21){\line(-1,-1){4}}
\put(17,9){$w$}
\put(17,15){$w'$}
\put(17,21){$w''$}
\multiput(15,21)(-.033613445,-.058823529){119}{\line(0,-1){.058823529}}
\put(14,22){\line(1,0){2}}
\put(15,23){\line(0,-1){2}}
\multiput(71.93,28.93)(0,-.85714){8}{{\rule{.4pt}{.4pt}}}
\put(72,22){\circle{2}}
\put(72,21){\line(0,-1){4}}
\put(72,16){\circle*{2}}
\put(72,15){\line(0,-1){4}}
\put(72,10){\circle{2}}
\put(72,21){\line(-1,-1){4}}
\put(74,9){$w$}
\put(74,15){$w'$}
\put(74,21){$w''$}
\multiput(72,21)(-.033613445,-.058823529){119}{\line(0,-1){.058823529}}
\put(71,22){\line(1,0){2}}
\put(72,23){\line(0,-1){2}}
\put(62,30){\circle{2}}
\put(77,40){\circle*{2}}
\put(124,30){\circle{2}}
\put(142,40){\circle*{2}}
\multiput(129,33)(-.06666667,-.03333333){60}{\line(-1,0){.06666667}}
\put(130,33){\circle*{2}}
\put(130,33){\line(2,1){6}}
\put(137,36){\circle{2}}
\multiput(142,39)(-.06666667,-.03333333){60}{\line(-1,0){.06666667}}
\multiput(136.93,28.93)(0,-.85714){8}{{\rule{.4pt}{.4pt}}}
\put(137,22){\circle{2}}
\put(137,21){\line(-1,-1){4}}
\put(139,21){$w''$}
\multiput(137,21)(-.033613445,-.058823529){119}{\line(0,-1){.058823529}}
\put(136,22){\line(1,0){2}}
\put(137,23){\line(0,-1){2}}
\end{picture}

  \caption{ \footnotesize Graphs $T^*$ and $G_3$ in Subcase 2.1(ii). }\label{case2-1ii}
\end{figure}
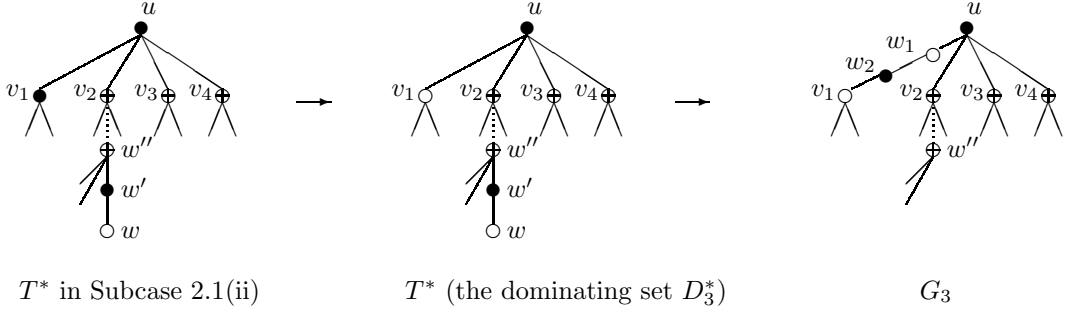

{\flushleft\bf Subcase 2.2 } $u\in D^*$ and $v_1\notin D^*$.

It is clear that $D^*$ in this subcase is similar as
$D^*_3$ in Subcase 2.1(ii).
Now we also construct the tree $G_3$.
Recall that $G_3\in \mathbb{G}_{n,\lfloor\frac{n}{2}\rfloor}$
and
 $\rho(G_3)< \rho(T^*)$, a contradiction.

{\flushleft\bf Subcase 2.3 } $u\notin D^*$.

As discussed in  Subcase 2.1(ii), by Lemma \ref{P3}
we may assume that
 $T^*[V(T_{v_2})\cup \{u\}]$ contains a pendant $3$-path, say $w w' w''$  with $d_{T^*}(w)=1$, $d_{T^*}(w')=2$ and $d_{T^*}(w'')\ge2$ such that   $w''$ is not uniquely dominated by $w'$ relative to $D^*$.
Let $G_4$ be the tree obtained from $T^*$ by
subdividing the edge $uv_1$ twice (say $w_1$, $w_2$ the two new vertices), and deleting the vertices $w$, $w'$ (see Fig.\ref{case2-3}).
It is easy to verify that
$D_4=D^*-\{w'\}+\{w_1\}$ is a minimum dominating set of $G_4$ whether $v_1\in D^*$ or not (see Fig.\ref{case2-3}).
Thus, $G_4\in \mathbb{G}_{n,\lfloor\frac{n}{2}\rfloor}$.
By Lemmas \ref{subdivision} and \ref{subgraph},
we have $\rho(G_4)< \rho(T^*)$, a contradiction.

It completes the proof.
\end{proof}

\begin{figure}[h]
  \centering
   \footnotesize
\unitlength 0.88mm 
\linethickness{0.4pt}
\ifx\plotpoint\undefined\newsavebox{\plotpoint}\fi 
\begin{picture}(179,42)(0,0)
\put(5,29){\line(-2,-5){2}}
\put(5,29){\line(2,-5){2}}
\put(15,30){\circle{2}}
\put(15,29){\line(-2,-5){2}}
\put(15,29){\line(2,-5){2}}
\put(24,30){\circle{2}}
\put(24,29){\line(-2,-5){2}}
\put(24,29){\line(2,-5){2}}
\put(14,30){\line(1,0){2}}
\put(15,31){\line(0,-1){2}}
\put(23,30){\line(1,0){2}}
\put(24,31){\line(0,-1){2}}
\put(20,42){$u$}
\put(0,30){$v_1$}
\put(10,30){$v_2$}
\put(19,30){$v_3$}
\put(27,30){$v_4$}
\put(36,30){\vector(1,0){5}}
\put(32,30){\circle{2}}
\put(32,29){\line(-2,-5){2}}
\put(32,29){\line(2,-5){2}}
\put(31,30){\line(1,0){2}}
\put(32,31){\line(0,-1){2}}
\multiput(14.93,27.93)(0,-.85714){8}{{\rule{.4pt}{.4pt}}}
\put(15,21){\circle{2}}
\put(15,20){\line(0,-1){4}}
\put(15,15){\circle*{2}}
\put(15,14){\line(0,-1){4}}
\put(15,9){\circle{2}}
\put(15,20){\line(-1,-1){4}}
\put(17,8){$w$}
\put(17,14){$w'$}
\put(17,20){$w''$}
\put(20,40){\circle{2}}
\multiput(20,39)(-.06302521,-.033613445){238}{\line(-1,0){.06302521}}
\multiput(20,39)(-.033557047,-.053691275){149}{\line(0,-1){.053691275}}
\put(20,39){\line(1,-2){4}}
\put(20,39){\line(3,-2){12}}
\put(6,0){$T^*$ ($v_1\notin D^*$)}
\multiput(15,20)(-.033613445,-.058823529){119}{\line(0,-1){.058823529}}
\put(63,30){\circle{2}}
\put(63,29){\line(-2,-5){2}}
\put(63,29){\line(2,-5){2}}
\put(72,30){\circle{2}}
\put(72,29){\line(-2,-5){2}}
\put(72,29){\line(2,-5){2}}
\put(62,30){\line(1,0){2}}
\put(63,31){\line(0,-1){2}}
\put(71,30){\line(1,0){2}}
\put(72,31){\line(0,-1){2}}
\put(68,42){$u$}
\put(58,30){$v_2$}
\put(67,30){$v_3$}
\put(75,30){$v_4$}
\put(80,30){\circle{2}}
\put(80,29){\line(-2,-5){2}}
\put(80,29){\line(2,-5){2}}
\put(79,30){\line(1,0){2}}
\put(80,31){\line(0,-1){2}}
\multiput(62.93,27.93)(0,-.85714){8}{{\rule{.4pt}{.4pt}}}
\put(63,21){\circle{2}}
\put(63,20){\line(-1,-1){4}}
\put(65,20){$w''$}
\put(50,29){\line(-2,-5){2}}
\put(50,29){\line(2,-5){2}}
\put(45,30){$v_1$}
\put(56,37){$w_1$}
\put(50,34){$w_2$}
\put(68,40){\circle{2}}
\multiput(68,39)(-.033557047,-.053691275){149}{\line(0,-1){.053691275}}
\put(68,39){\line(1,-2){4}}
\put(68,39){\line(3,-2){12}}
\put(56,33){\circle{2}}
\put(61,0){$G_4$}
\multiput(63,20)(-.033613445,-.058823529){119}{\line(0,-1){.058823529}}
\put(63,37){\circle*{2}}
\put(63,37){\line(-2,-1){6}}
\multiput(67,39)(-.06666667,-.03333333){60}{\line(-1,0){.06666667}}
\put(14,21){\line(1,0){2}}
\put(15,22){\line(0,-1){2}}
\put(62,21){\line(1,0){2}}
\put(63,22){\line(0,-1){2}}
\put(5,30){\circle{2}}
\put(50,30){\circle{2}}
\multiput(55,33)(-.06666667,-.03333333){60}{\line(-1,0){.06666667}}
\put(102,29){\line(-2,-5){2}}
\put(102,29){\line(2,-5){2}}
\put(112,30){\circle{2}}
\put(112,29){\line(-2,-5){2}}
\put(112,29){\line(2,-5){2}}
\put(121,30){\circle{2}}
\put(121,29){\line(-2,-5){2}}
\put(121,29){\line(2,-5){2}}
\put(111,30){\line(1,0){2}}
\put(112,31){\line(0,-1){2}}
\put(120,30){\line(1,0){2}}
\put(121,31){\line(0,-1){2}}
\put(117,42){$u$}
\put(97,30){$v_1$}
\put(107,30){$v_2$}
\put(116,30){$v_3$}
\put(124,30){$v_4$}
\put(133,30){\vector(1,0){5}}
\put(129,30){\circle{2}}
\put(129,29){\line(-2,-5){2}}
\put(129,29){\line(2,-5){2}}
\put(128,30){\line(1,0){2}}
\put(129,31){\line(0,-1){2}}
\multiput(111.93,27.93)(0,-.85714){8}{{\rule{.4pt}{.4pt}}}
\put(112,21){\circle{2}}
\put(112,20){\line(0,-1){4}}
\put(112,15){\circle*{2}}
\put(112,14){\line(0,-1){4}}
\put(112,9){\circle{2}}
\put(112,20){\line(-1,-1){4}}
\put(114,8){$w$}
\put(114,14){$w'$}
\put(114,20){$w''$}
\put(117,40){\circle{2}}
\put(102,30){\circle*{2}}
\multiput(117,39)(-.06302521,-.033613445){238}{\line(-1,0){.06302521}}
\multiput(117,39)(-.033557047,-.053691275){149}{\line(0,-1){.053691275}}
\put(117,39){\line(1,-2){4}}
\put(117,39){\line(3,-2){12}}
\put(105,0){$T^*$ ($v_1\in D^*$)}
\multiput(112,20)(-.033613445,-.058823529){119}{\line(0,-1){.058823529}}
\put(160,30){\circle{2}}
\put(160,29){\line(-2,-5){2}}
\put(160,29){\line(2,-5){2}}
\put(169,30){\circle{2}}
\put(169,29){\line(-2,-5){2}}
\put(169,29){\line(2,-5){2}}
\put(159,30){\line(1,0){2}}
\put(160,31){\line(0,-1){2}}
\put(168,30){\line(1,0){2}}
\put(169,31){\line(0,-1){2}}
\put(165,42){$u$}
\put(155,30){$v_2$}
\put(164,30){$v_3$}
\put(172,30){$v_4$}
\put(177,30){\circle{2}}
\put(177,29){\line(-2,-5){2}}
\put(177,29){\line(2,-5){2}}
\put(176,30){\line(1,0){2}}
\put(177,31){\line(0,-1){2}}
\multiput(159.93,27.93)(0,-.85714){8}{{\rule{.4pt}{.4pt}}}
\put(160,21){\circle{2}}
\put(160,20){\line(-1,-1){4}}
\put(162,20){$w''$}
\put(147,29){\line(-2,-5){2}}
\put(147,29){\line(2,-5){2}}
\put(142,30){$v_1$}
\put(153,37){$w_1$}
\put(147,34){$w_2$}
\put(147,30){\circle*{2}}
\put(165,40){\circle{2}}
\multiput(165,39)(-.033557047,-.053691275){149}{\line(0,-1){.053691275}}
\put(165,39){\line(1,-2){4}}
\put(165,39){\line(3,-2){12}}
\put(153,33){\circle{2}}
\put(152,33){\line(-5,-3){5}}
\put(158,0){$G_4$}
\multiput(160,20)(-.033613445,-.058823529){119}{\line(0,-1){.058823529}}
\put(160,37){\circle*{2}}
\put(160,37){\line(-2,-1){6}}
\multiput(164,39)(-.06666667,-.03333333){60}{\line(-1,0){.06666667}}
\put(111,21){\line(1,0){2}}
\put(112,22){\line(0,-1){2}}
\put(159,21){\line(1,0){2}}
\put(160,22){\line(0,-1){2}}
\end{picture}

  \caption{ \footnotesize Graphs $T^*$ and $G_4$ in Subcase 2.3. }\label{case2-3}
\end{figure}
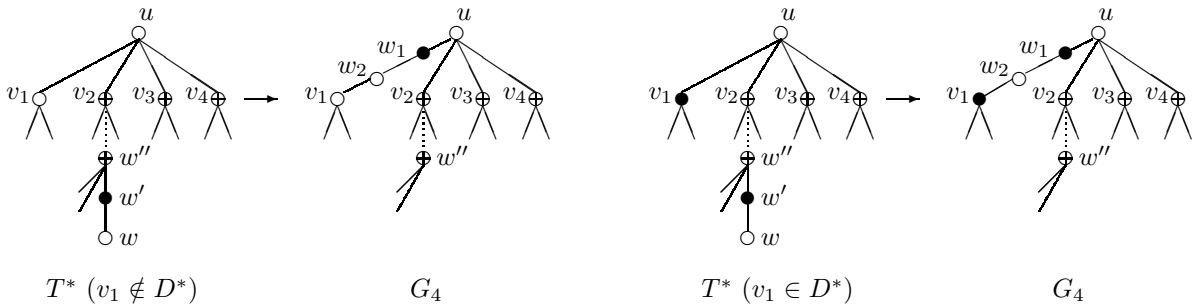

\begin{lem}\label{diameter-bound}
For odd $n \ge 13$,
let $T^*$ be the minimizer graph in $\mathbb{G}_{n,\lfloor\frac{n}{2}\rfloor}$.
Then  the diameter $d(T^*)$ of $T^*$ satisfies $d(T^*)\ge7$.
\end{lem}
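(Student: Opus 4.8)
The plan is to argue by contradiction: suppose $d(T^*)\le 6$ and derive that $\gamma(T^*)<\frac{n-1}{2}$, contradicting $\gamma(T^*)=\lfloor\frac{n}{2}\rfloor=\frac{n-1}{2}$. The crucial observation is that, since $n$ is odd, Lemma \ref{domination-number-upbound} forces $\gamma(T^*)=\frac{n-1}{2}$ to be the \emph{largest} possible domination number of an $n$-vertex graph without isolated vertices; hence it suffices to show that a tree obeying all the structural restrictions already established, namely $\Delta(T^*)=3$ (Lemma \ref{max-degree}), $l_{T^*}(u)\le 1$ for every $u$ (Lemma \ref{leave-number}), and at least two branching vertices (Lemma \ref{branching-vertex}), cannot attain this maximum once its diameter is at most $6$ and $n\ge 13$.

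First I would fix a diametral path $P=u_0u_1\cdots u_d$ with $d\le 6$. By Lemma \ref{leave-number} its two ends $u_0u_1u_2$ and $u_du_{d-1}u_{d-2}$ are pendant $3$-paths, and the supports $u_1,u_{d-1}$ cannot be branching, since a third neighbour would either create a second leaf at a support, violating Lemma \ref{leave-number}, or else lengthen the diameter beyond $6$. Consequently every branching vertex lies among the interior vertices $u_2,\dots,u_{d-2}$ or on a branch hanging from them; and since any such branch, together with the longer arm of $P$, must again have length at most $d\le 6$, each hanging branch is itself a tree of depth at most $3$, i.e. a pendant $3$-path or something shorter. Using $\Delta(T^*)=3$ and $l_{T^*}\le 1$, this confines $T^*$ to a short finite list of shapes, on each of which the number of branching vertices is $2$, $3$ or $4$.

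The final step is a domination count on this finite list. On each admissible shape I would build a dominating set by first placing all support vertices into $D$ (legitimate by Lemma \ref{dominating-set-support}); these dominate every leaf, every support, and, because the branches are short, nearly every interior vertex as well, so that only a bounded number of further vertices is needed to finish, the slack being controlled by the fact (Lemma \ref{P3}) that at most one pendant $3$-path forces a private dominator. A careful bookkeeping then yields $|D|\le\frac{n-3}{2}$, whence $\gamma(T^*)\le\frac{n-3}{2}<\frac{n-1}{2}$, the desired contradiction. \textbf{The main obstacle} is precisely this last accounting: one must verify, uniformly over all the small-diameter configurations and sharply enough to gain the crucial extra ``$-1$'', that the efficient dominating set beats $\lfloor\frac{n}{2}\rfloor$; and one must check that the hypothesis $n\ge 13$ is exactly what excludes the sporadic small near-coronas (such as coronas of short paths) that do realise the maximum domination number at small diameter. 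Managing these boundary cases, rather than the generic ones, is where the real work lies.
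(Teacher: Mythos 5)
Your strategy has a genuine gap, and it sits exactly where you located "the real work": the bookkeeping bound $|D|\le\frac{n-3}{2}$ is simply false for one of the admissible configurations, and no domination count can repair it. Concretely, for $n=13$ consider the tree $T_3$ of Fig.\ref{TT}: a path $v_1v_2\cdots v_7$ with a leaf attached at $v_3$, a leaf at $v_5$, and at $v_4$ a hanging branch consisting of a vertex $w_1$ adjacent to a leaf and to a vertex $w_2$ which carries a further leaf. This tree has $13$ vertices, diameter $6$, maximum degree $3$, at most one leaf at every vertex, at least two branching vertices, and domination number exactly $6=\frac{13-1}{2}$ (the six support vertices form a minimum dominating set). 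So $T_3\in\mathbb{G}_{13,6}$ and satisfies every structural restriction established by Lemmas \ref{leave-number}, \ref{max-degree} and \ref{branching-vertex}; it is a bona fide candidate of diameter $6$, and your hoped-for contradiction $\gamma<\frac{n-1}{2}$ cannot be derived for it. Your hedge that "$n\ge 13$ excludes the sporadic small cases" points in the wrong direction: the sporadic case occurs at $n=13$ itself, inside the allowed range.

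This is why the paper's proof is not purely combinatorial. It splits into three regimes: for $n\ge 17$, Lemmas \ref{leave-number} and \ref{max-degree} give a pure vertex count, namely diameter $5$ forces $n\le n(T_1)=10$ and diameter $6$ forces $n\le n(T_2)=16$, so no admissible tree exists at all (note this is a size bound, not a domination count, and it makes your proposed case analysis vacuous there); for $n=15$, an enumeration shows the set of admissible trees with $\gamma=7$ and diameter at most $6$ is empty; and for $n=13$, where the admissible set is $\{T_3\}$, the paper excludes $T_3$ \emph{spectrally}, by checking $\rho(T_3)=2.2882>2.1358=\rho\bigl(T^{\frac{n+3}{2}}_{\lceil\frac{n-3}{4}\rceil,\lfloor\frac{n-3}{4}\rfloor}\bigr)$, the right-hand graph being a member of $\mathbb{G}_{13,6}$ of larger diameter, so $T_3$ cannot be the minimizer. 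Some step of this numerical/spectral kind is unavoidable in any correct proof, and it is the ingredient your proposal is missing.
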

\begin{proof}
Let $d=d(T^*)$ and  $P_{d+1}=v_1v_2\cdots v_{d+1}$ be a diametrical path of $T^*$.
By Lemmas \ref{leave-number} and \ref{max-degree}, we
have
$$n\le\left\{\begin{array}{ll}
n(T_1)=10,&\mbox{if $d=5$,}\\
n(T_2)=16,&\mbox{if $d=6$,}
\end{array}\right.$$
where $T_1$ and $T_2$ are described in Fig.\ref{TT}.
Thus,  we have $d\ge7$ for $n\ge17$.
On the other hand, it is easy to find the tree set
\begin{align*}
\mathbf{T}_n&=\{T: n(T)=n,\ d(T)\le 6,\ \gamma(T)=\frac{n-1}{2},\ \Delta(T)= 3,\  l_T(u)\le1\ \mbox{for}\  u\in V(T) \}\\
&=
\left\{\begin{array}{ll}
\emptyset, & \mbox{if $ n=15$},\\
\{T_3\}, & \mbox{if $ n=13$},
\end{array}\right.\end{align*}
where $T_3$ is shown in Fig.\ref{TT}.
A simple calculation shows
$$\rho(T_3)=2.2882>2.1358=\rho(T_{\lceil\frac{n-3}{4}\rceil, \lfloor\frac{n-3}{4}\rfloor}^{\frac{n+3}{2}})$$ for $n=13$.
Thus, any graph in $\mathbf{T}_n$ is not a minimizer graph for $13\le n\le 15$.
Since every graph in $\mathbf{T}_n$ has the diameter at most $6$,
 we have $d\ge7$ for $13\le n\le 15$.

It completes the proof.
\end{proof}

\begin{figure}[h]
  \centering
  \footnotesize
\unitlength 0.7mm 
\linethickness{0.4pt}
\ifx\plotpoint\undefined\newsavebox{\plotpoint}\fi 
\begin{picture}(193,28)(0,0)
\put(1,27){\line(1,0){9}}
\put(1,27){\circle*{2}}
\put(10,27){\circle*{2}}
\put(10,27){\line(1,0){9}}
\put(19,27){\circle*{2}}
\put(19,27){\line(1,0){9}}
\put(28,27){\circle*{2}}
\put(28,27){\line(1,0){9}}
\put(37,27){\circle*{2}}
\put(37,27){\line(1,0){9}}
\put(46,27){\circle*{2}}
\put(19,27){\line(0,-1){6}}
\put(19,21){\circle*{2}}
\put(19,21){\line(0,-1){6}}
\put(19,15){\circle*{2}}
\put(28,27){\line(0,-1){6}}
\put(28,21){\circle*{2}}
\put(28,21){\line(0,-1){6}}
\put(28,15){\circle*{2}}
\put(22,0){$T_1$}
\put(65,27){\line(1,0){9}}
\put(65,27){\circle*{2}}
\put(74,27){\circle*{2}}
\put(74,27){\line(1,0){9}}
\put(83,27){\circle*{2}}
\put(83,27){\line(1,0){9}}
\put(92,27){\circle*{2}}
\put(92,27){\line(1,0){9}}
\put(101,27){\circle*{2}}
\put(101,27){\line(1,0){9}}
\put(110,27){\circle*{2}}
\put(110,27){\line(1,0){9}}
\put(119,27){\circle*{2}}
\put(83,27){\line(0,-1){6}}
\put(83,21){\circle*{2}}
\put(83,21){\line(0,-1){6}}
\put(83,15){\circle*{2}}
\put(92,27){\line(0,-1){6}}
\put(92,21){\circle*{2}}
\put(92,21){\line(0,-1){6}}
\put(92,15){\circle*{2}}
\put(101,27){\line(0,-1){6}}
\put(101,21){\circle*{2}}
\put(101,21){\line(0,-1){6}}
\put(101,15){\circle*{2}}
\put(92,15){\line(0,-1){6}}
\put(92,9){\circle*{2}}
\put(92,21){\line(-2,-3){4}}
\put(88,15){\circle*{2}}
\put(88,15){\line(0,-1){6}}
\put(88,9){\circle*{2}}
\put(91,0){$T_2$}
\put(138,27){\line(1,0){9}}
\put(138,27){\circle*{2}}
\put(147,27){\circle*{2}}
\put(147,27){\line(1,0){9}}
\put(156,27){\circle*{2}}
\put(156,27){\line(1,0){9}}
\put(165,27){\circle*{2}}
\put(165,27){\line(1,0){9}}
\put(174,27){\circle*{2}}
\put(174,27){\line(1,0){9}}
\put(183,27){\circle*{2}}
\put(183,27){\line(1,0){9}}
\put(192,27){\circle*{2}}
\put(156,27){\line(0,-1){6}}
\put(156,21){\circle*{2}}
\put(165,27){\line(0,-1){6}}
\put(165,21){\circle*{2}}
\put(165,21){\line(0,-1){6}}
\put(165,15){\circle*{2}}
\put(174,27){\line(0,-1){6}}
\put(174,21){\circle*{2}}
\put(165,15){\line(0,-1){6}}
\put(165,9){\circle*{2}}
\put(165,21){\line(-2,-3){4}}
\put(161,15){\circle*{2}}
\put(164,0){$T_3$}
\end{picture}

  \caption{\footnotesize Graphs $T_1,T_2, T_3$.}\label{TT}
\end{figure}
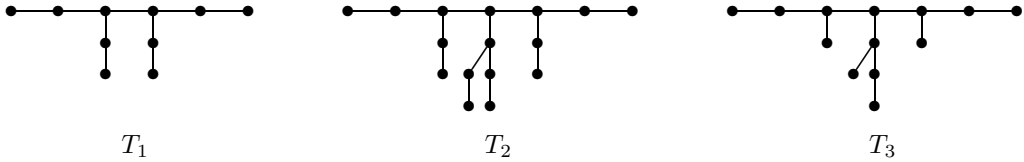

A tree  is called  a \emph{caterpillar} if it becomes
a path after deleting all leaves.

\begin{lem}\label{caterpillar}
For odd $n \ge 13$,
let $T^*$ be the minimizer graph in $\mathbb{G}_{n,\lfloor\frac{n}{2}\rfloor}$.
Then  $T^*$ is  a caterpillar.
\end{lem}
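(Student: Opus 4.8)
The plan is to argue by contradiction, reusing the subdivide-and-prune mechanism developed in the proof of Lemma~\ref{max-degree}. First I would reduce ``caterpillar'' to a purely local degree condition. Since $\Delta(T^*)=3$ by Lemma~\ref{max-degree} and $l_{T^*}(u)\le 1$ for every vertex by Lemma~\ref{leave-number}, deleting all leaves of $T^*$ yields a subtree $T'$ with $\Delta(T')\le 3$; a vertex $u$ with $d_{T^*}(u)=3$ retains degree $3$ in $T'$ precisely when all three neighbours of $u$ are non-leaves. Hence $T^*$ is a caterpillar if and only if $T'$ is a path, i.e.\ if and only if \emph{no} vertex $u$ with $d_{T^*}(u)=3$ has all three neighbours non-leaves. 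So I would assume, for contradiction, that there is such a vertex $u$, with $N_{T^*}(u)=\{v_1,v_2,v_3\}$ and $T^*-u=T_{v_1}\cup T_{v_2}\cup T_{v_3}$, where each branch $T_{v_i}$ has at least two vertices.

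Next I would locate the two ingredients for the transformation: an internal-path edge to subdivide and a deletable pendant $3$-path to prune. By Lemma~\ref{branching-vertex}, $T^*$ has a second branching vertex, so along the path from $u$ to the nearest other branching vertex every interior vertex has degree $2$; consequently the first edge $uv_1$ of this path lies on an internal path of $T^*$. On the other hand, each non-trivial branch $T^*[V(T_{v_i})\cup\{u\}]$ contains a pendant $3$-path (its deepest pendant path, with $u$ as the far endpoint when the branch has depth one), so $T^*$ carries at least three pendant $3$-paths, one per branch. Fixing a minimum dominating set $D^*$ containing all support vertices (Lemma~\ref{dominating-set-support}) and invoking Lemma~\ref{P3}, at most one pendant $3$-path has its end $w''$ uniquely dominated; hence at least one pendant $3$-path sitting in $T_{v_2}$ or $T_{v_3}$ (and thus disjoint from $uv_1$) has $w''$ \emph{not} uniquely dominated relative to a suitable minimum dominating set, which I would produce exactly as in Lemma~\ref{P3-exist}.

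With these in hand I would build the competitor $G$ from $T^*$ by subdividing the internal-path edge $uv_1$ twice (adding $w_1,w_2$) and deleting the two vertices $w,w'$ of the chosen pendant $3$-path $ww'w''$. This preserves $n(G)=n$, and since $uv_1$ lies on an internal path and $T^*\not\cong W_n$ (as $l_{T^*}\le 1$), Lemmas~\ref{subdivision} and \ref{subgraph} give $\rho(G)<\rho(T^*)$. The domination bookkeeping is the heart of the argument and would be carried out by casework on whether $u\in D^*$ and on the parities of the branch sizes $n_i=n(T_{v_i})$, mirroring Subcases~2.1--2.3 of Lemma~\ref{max-degree}: using Lemma~\ref{half-domination} to replace $D^*$ on an even branch by its complement, one arranges $w'\in D$, $w\notin D$, with $w''$ not uniquely dominated, and then verifies that $D'=D-\{w'\}+\{w_1\}$ is a dominating set of $G$ of size $\tfrac{n-1}{2}$, so $G\in\mathbb{G}_{n,\lfloor n/2\rfloor}$. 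This contradicts the minimality of $T^*$ and forces $T^*$ to be a caterpillar.

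The main obstacle is this last point: showing $\gamma(G)=\tfrac{n-1}{2}$ \emph{exactly}, not merely $\gamma(G)\le\tfrac{n-1}{2}$. The upper bound is immediate from the explicit set $D'$, but the matching lower bound requires that pruning the pendant $3$-path lowers the domination number by precisely one while the double subdivision raises it by precisely one; guaranteeing both at once is exactly what compels the parity casework and the repeated appeal to Lemma~\ref{half-domination} to select a minimum dominating set with the correct local behaviour simultaneously at the pruned $3$-path and at the subdivided edge.
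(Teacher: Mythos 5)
Your proposal runs on the same engine as the paper's proof: assume $T^*$ is not a caterpillar, subdivide an internal edge twice, delete the first two vertices $w,w'$ of a pendant $3$-path equipped with a good minimum dominating set via Lemma~\ref{P3-exist}, argue the domination number is unchanged, and conclude $\rho<\rho(T^*)$ from Lemmas~\ref{subdivision} and~\ref{subgraph}. Where you genuinely differ is the localization of the witness. The paper works on a diametrical path, picks $v_i$ with a nontrivial off-path branch $T_u$, needs its Claim~3.1 that $T^*-T_u$ is not a path (this is where the bound $d\ge 7$ of Lemma~\ref{diameter-bound} is used), subdivides the path edge $v_{i-1}v_i$, and prunes the $3$-path inside $T_u$. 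You instead reduce ``non-caterpillar'' to the existence of a degree-$3$ vertex $u$ all of whose neighbours are non-leaves (a correct equivalence, given $\Delta(T^*)=3$ and $l_{T^*}\le 1$), obtain the internal edge $uv_1$ from Lemma~\ref{branching-vertex}, and prune a $3$-path in a branch $T_{v_2}$ or $T_{v_3}$ disjoint from the subdivided edge. This variant is sound and slightly leaner: it bypasses Lemma~\ref{diameter-bound} entirely, and separating the pruned path from the subdivided edge decouples the two operations. (Your appeal to Lemma~\ref{P3} is redundant, since Lemma~\ref{P3-exist} already applies to every pendant $3$-path.)

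There is, however, one concrete hole in the bookkeeping as you state it. Your only replacement rule is $D'=D-\{w'\}+\{w_1\}$, with $w_1$ the new vertex adjacent to $u$. This fails exactly when $v_1$ is uniquely dominated by $u$ relative to $D$: after the subdivision, $v_1$ is no longer adjacent to $u$, $w_2\notin D'$, and no vertex of $T_{v_1}$ dominates $v_1$, so $D'$ is not a dominating set of $G$. Nothing in Lemma~\ref{P3-exist} excludes this configuration, and your proposed repair via Lemma~\ref{half-domination} does not literally apply there, because in that configuration $D\cap V(T_{v_1})$ fails to dominate $v_1$ and hence is not a minimum dominating set of $T_{v_1}$ that one could complement. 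The clean fix is the mirror replacement: in that configuration $u\in D$ necessarily, and $D'=D-\{w'\}+\{w_2\}$ works, since $w_2$ dominates $w_1$, $w_2$, $v_1$ while $u$ dominates itself and $v_2,v_3$; this is the analogue of the paper's $D'_2=D_2-\{w\}+\{w_2\}$ in its Case~2, and with this two-way choice of $w_1$ versus $w_2$ your argument closes without any parity analysis at all. Finally, be aware that exhibiting $D'$ of size $\frac{n-1}{2}$ only yields $\gamma(G)\le\frac{n-1}{2}$; you rightly flag the matching lower bound as the crux, but your claim that the parity casework supplies it is a mischaracterization --- the casework only produces the dominating set. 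The paper itself disposes of this point with ``it is easy to verify,'' so you are not below its standard of rigor, but the casework should not be advertised as a proof of minimality.
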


\begin{proof}
Let $P_{d+1}=v_1v_2\cdots v_{d+1}$ be a diametrical path of $T^*$.
Suppose to the contrary that $T^*$ is not  a caterpillar,
that is, there exists some vertex $v_i$ of the path $P_{d+1}$ such that $T_{u}\not=K_1$, where $T_{u}$ is the component of $T^*-v_i$ containing the unique neighbor $u$ of $v_i$ not in $P_{d+1}$.
Firstly, we have  the following claim.

\begin{clm}\label{claim-1}
$T^*-T_{u}$ is not isomorphic to a path.
\end{clm}
\begin{proof}
Suppose to the contrary that $T^*-T_{u}$ is a path, i.e.,
$T^*-T_{u}=P_{d+1}$.
Then
$$\gamma(T^*)\le \gamma(P_{d+1}) + \gamma(T_{u})
\le \frac{d}{3}+1+\frac{n-(d+1)}{2}=\frac{n}{2}-\frac{d-3}{6}<\frac{n-1}{2}=\gamma(T^*),$$
for $d\ge7$ (due to Lemma \ref{diameter-bound}), a contradiction.
Thus Claim \ref{claim-1} holds.
\end{proof}

Recall that $\Delta(T^*)=3$. By Claim \ref{claim-1}, there must exist some vertex $v_j\not=v_i$ in  $P_{d+1}$ such that $d_{T^*}(v_j)=3$.
Without loss of generality, we may assume that $j<i$. Thus,  $v_{i-1}v_i$  is an edge on an internal path of $T^*$.
Since $T_u\not=K_1$, there exists a pendant $3$-path $ww'w''$ with $d_{T^*}(w)=1$,
$d_{T^*}(w')=2$  and $d_{T^*}(w'')\ge 2$ in the induced subgraph $T^*[V(T_u)\cup \{v_i\}]$.
By Lemma \ref{P3-exist}, there exists
a minimum dominating set $D$  of $T^*$   such that  $w\notin D$, $w'\in D$ and
  $w''$ is not uniquely dominated by $w'$ relative to  $D$.
Let $T'$  be the tree obtained from $T^*$ by
subdividing the edge $v_{i-1}v_i$ twice (say $w_1$, $w_2$ the two new vertices), and deleting the vertices $w$, $w'$.
  Next we will show $\gamma(T')=\gamma(T^*)$ from the following two cases.
{\flushleft\bf Case 1. }
At least one of $v_{i-1}$ and $v_i$ does not belong to $D$.

Without loss of generality  assume $v_{i-1}\notin D$.
Let $D'=D-\{w'\}+\{w_1\}$.
It is easy to verify that $D'$ is a  minimum dominating set of $T'$.
Thus, $\gamma(T')=\gamma(T^*)$.
{\flushleft\bf Case 2. }
Both $v_{i-1}$ and $v_i$  belong to $D$.

Let $T_{v_{i-1}}$ and $T_{v_{i}}$ be the components of $T^*-v_{i-1}v_i$ containing  $v_{i-1}$ and $v_{i}$, respectively.
In this case,
 $V(T_{v_{i-1}})\cap D$ and $ V(T_{v_{i}})\cap D$ is a  minimum dominating set of $T_{v_{i-1}}$ and $T_{v_{i}}$, respectively.
 By Lemma \ref{domination-number-upbound},
 we have $\gamma(T_{v_{i-1}})\le \lfloor\frac{n(T_{v_{i-1}})}{2}\rfloor$
and
$\gamma(T_{v_{i}})\le\lfloor\frac{n(T_{v_{i}})}{2}\rfloor$.
Recall that $\gamma(T^*)=\frac{n-1}{2}$,
we have $\gamma(T_{v_{i-1}})=\lfloor\frac{n(T_{v_{i-1}})}{2}\rfloor$
and
$\gamma(T_{v_{i}})=\lfloor\frac{n(T_{v_{i}})}{2}\rfloor$.
If $n(T_{v_{i-1}})$  is even, then
 $\gamma(T_{v_{i-1}})=\frac{n(T_{v_{i-1}})}{2}$.
 By Lemma \ref{half-domination}, we have $V(T_{v_{i-1}})\setminus D$ is also a  minimum dominating set of $T_{v_{i-1}}$, and so
$$D_1=[V(T_{v_{i-1}})\setminus D]\bigcup [ V(T_{v_{i}})\cap D]$$
is  also a  minimum dominating set of $T^*$. Thus,
$v_{i-1}\notin D_1$ and  $v_{i}\in D_1$.
One can see that $w\notin D_1$, $w'\in D_1$ and $w''$ is not uniquely dominated by $w'$ relative to  $D_1$.
Let $D'_1=D_1-\{w'\}+\{w_1\}$.
It is easy to verify that $D'_1$ is a  minimum dominating set of $T'$.
Thus, $\gamma(T')=\gamma(T^*)$.
If $n(T_{v_{i-1}})$  is odd, then
$n(T_{v_{i}})=n-n(T_{v_{i-1}})$ is even, and so
$\gamma(T_{v_{i}})=\frac{n(T_{v_{i}})}{2}$.
Thus, $V(T_{v_{i}})\setminus D$
is also  a dominating set of
$T_{v_{i}}$ from Lemma \ref{half-domination}.
Let $$D_2=[V(T_{v_{i-1}})\cap D]\bigcup [V(T_{v_{i}})\setminus D].$$
Clearly, $D_2$ is a  minimum dominating set of
$T^*$ and $v_{i-1}\in D_2$, $v_{i}\notin D_2$.
It is clear that
 $w\in D_2$ and  $w'\notin D_2$.
 Let $D'_2=D_2-\{w\}+\{w_2\}$.
 Then $D'_2$ is a  minimum dominating set of $T'$, and so
 $\gamma(T')=\gamma(T^*)$.

Now we have known  $\gamma(T')=\gamma(T^*)$, which implies  $T'\in \mathbb{G}_{n,\lfloor\frac{n}{2}\rfloor}$.
By Lemmas \ref{subdivision} and \ref{subgraph},
we have $\rho(T')< \rho(T^*)$, a contradiction.
It completes the proof.
\end{proof}

\begin{figure}[h]
  \centering
    \footnotesize
\unitlength 1mm 
\linethickness{0.4pt}
\ifx\plotpoint\undefined\newsavebox{\plotpoint}\fi 
\begin{picture}(63,18)(0,0)
\put(1,15){\circle*{2}}
\put(1,15){\line(0,-1){8}}
\put(1,7){\circle*{2}}
\put(1,15){\line(1,0){8}}
\put(9,15){\circle*{2}}
\put(9,15){\line(0,-1){8}}
\put(9,7){\circle*{2}}
\put(25,15){\circle*{2}}
\put(25,15){\line(0,-1){8}}
\put(25,7){\circle*{2}}
\put(8.93,14.93){\line(1,0){.9412}}
\put(10.812,14.93){\line(1,0){.9412}}
\put(12.694,14.93){\line(1,0){.9412}}
\put(14.577,14.93){\line(1,0){.9412}}
\put(16.459,14.93){\line(1,0){.9412}}
\put(18.341,14.93){\line(1,0){.9412}}
\put(20.224,14.93){\line(1,0){.9412}}
\put(22.106,14.93){\line(1,0){.9412}}
\put(23.989,14.93){\line(1,0){.9412}}
\put(0,18){$v_{1}$}
\put(8,18){$v_{2}$}
\put(24,18){$v_{i}$}
\put(31,18){$v_{i+1}$}
\put(25,15){\line(1,0){8}}
\put(33,15){\circle*{2}}
\put(40,18){$v_{i+2}$}
\put(55,18){$v_{\frac{n+1}{2}}$}
\put(33,15){\line(1,0){8}}
\put(41,15){\circle*{2}}
\put(41,15){\line(0,-1){8}}
\put(41,7){\circle*{2}}
\put(57,15){\circle*{2}}
\put(57,15){\line(0,-1){8}}
\put(57,7){\circle*{2}}
\put(40.93,14.93){\line(1,0){.9412}}
\put(42.812,14.93){\line(1,0){.9412}}
\put(44.694,14.93){\line(1,0){.9412}}
\put(46.577,14.93){\line(1,0){.9412}}
\put(48.459,14.93){\line(1,0){.9412}}
\put(50.341,14.93){\line(1,0){.9412}}
\put(52.224,14.93){\line(1,0){.9412}}
\put(54.106,14.93){\line(1,0){.9412}}
\put(55.989,14.93){\line(1,0){.9412}}
\end{picture}

  \caption{  \footnotesize Graph $T^{\frac{n+1}{2}}_{i,\frac{n-1}{2}-i}$.}\label{H}
\end{figure}
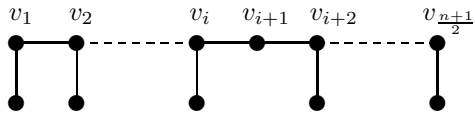

\begin{lem}\label{diameter}
For odd $n \ge 13$,
let $T^*$ be the minimizer graph in $\mathbb{G}_{n,\lfloor\frac{n}{2}\rfloor}$.
Then  $d(T^*)=\frac{n+5}{2}$.
\end{lem}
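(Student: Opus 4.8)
The plan is to pin down the diameter by combining a counting argument for the domination number (which gives the upper bound) with a spectral comparison (which gives the lower bound). Write $d=d(T^*)$ and let $P_{d+1}=v_1v_2\cdots v_{d+1}$ be a diametrical path. Since $T^*$ is a caterpillar (Lemma~\ref{caterpillar}) with $\Delta(T^*)=3$ (Lemma~\ref{max-degree}) and $l_{T^*}(u)\le 1$ (Lemma~\ref{leave-number}), the spine $v_2\cdots v_d$ consists of exactly $d-1$ vertices, the ends $v_2,v_d$ have degree $2$ and bear the leaves $v_1,v_{d+1}$, and each interior spine vertex carries at most one leaf. Let $p$ be the number of leaf-bearing spine vertices; then $v_2,v_d$ are among them, $n=(d-1)+p$, and the leaf-bearing positions $2=q_1<\cdots<q_p=d$ split the spine into $p-1$ gaps $g_i=q_{i+1}-q_i$ with $\sum_i g_i=d-2$. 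First I would compute $\gamma(T^*)$: choosing a minimum dominating set containing all supports (Lemma~\ref{dominating-set-support}), the only undominated vertices are the interiors of the long gaps, each an induced path dominated independently, so that
\[
\gamma(T^*)=p+\sum_{i=1}^{p-1}\Big\lceil \tfrac{\max(g_i-3,\,0)}{3}\Big\rceil .
\]
Setting this equal to $\frac{n-1}{2}$ and using $n=(d-1)+p$ yields $\sum_i\lceil\max(g_i-3,0)/3\rceil=d-\frac{n+3}{2}$.

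For the upper bound $d\le\frac{n+5}{2}$ the argument is purely combinatorial: writing $c_i=\lceil\max(g_i-3,0)/3\rceil$, a gap has length at least $3c_i+1$ when $c_i\ge 1$ and at least $1$ otherwise, so
\[
d-2=\sum_i g_i\ \ge\ (n-d)+3\sum_i c_i=(n-d)+3\Big(d-\tfrac{n+3}{2}\Big),
\]
which rearranges to $d\le\frac{n+5}{2}$. Since $\sum_i c_i\ge 0$ also forces $d\ge\frac{n+3}{2}$ and $n$ is odd, the only remaining possibility to exclude is $d=\frac{n+3}{2}$.

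If $d=\frac{n+3}{2}$ then $\sum_i c_i=0$, so every gap has length at most $3$ and $p=\frac{n-1}{2}$; a short count shows exactly one gap has length $2$, the rest length $1$, i.e.\ $T^*\cong T^{\frac{n+1}{2}}_{i,\frac{n-1}{2}-i}$ (Fig.~\ref{H}) with a single leafless interior spine vertex $v_{i+1}$. Here I would apply the surgery used throughout the section: delete the pendant $3$-path at an end far from $v_{i+1}$ (say $v_1v_2v_3$, deleting $v_1,v_2$) and subdivide the internal-path edge $v_iv_{i+1}$ twice. By Lemma~\ref{subdivision} the double subdivision strictly decreases $\rho$, and by Lemma~\ref{subgraph} removing the pendant $3$-path decreases it again, so the resulting tree $T'$ satisfies $\rho(T')<\rho(T^*)$. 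Moreover $T'$ has $n$ vertices, $p'=\frac{n-3}{2}$ leaf-bearing vertices and exactly one gap of length $4$, so the gap formula gives $\gamma(T')=\frac{n-3}{2}+1=\frac{n-1}{2}$; hence $T'\in\mathbb{G}_{n,\lfloor n/2\rfloor}$, contradicting the minimality of $T^*$. Therefore $d=\frac{n+5}{2}$.

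I expect the main obstacle to be this last step, and specifically the bookkeeping that the surgery leaves $\gamma$ unchanged. One must verify, via Lemma~\ref{P3-exist} together with the parity/half-domination argument of Lemma~\ref{half-domination}, that deleting the pendant $3$-path lowers the domination number by exactly $1$ while the double subdivision raises it by exactly $1$; this is precisely why the same move cannot be iterated past $d=\frac{n+5}{2}$, since the combinatorial bound shows any further lengthening would push $\gamma$ below $\frac{n-1}{2}$. A secondary technical point is justifying the clean domination formula for the caterpillar, namely that once all supports lie in the dominating set the remaining undominated vertices are exactly the gap interiors, each of which is an induced path that is dominated independently.
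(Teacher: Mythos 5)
Your proposal follows the same two-step skeleton as the paper's proof: a domination-number count that pins $d(T^*)$ down to $\{\frac{n+3}{2},\frac{n+5}{2}\}$, followed by a subdivision-plus-deletion surgery that eliminates $d=\frac{n+3}{2}$. Your counting half is correct, and in fact tighter than the paper's: the exact caterpillar formula $\gamma(T^*)=p+\sum_i\lceil\max(g_i-3,0)/3\rceil$ gives both bounds at once, whereas the paper's upper estimate (\ref{gamma2}), read literally, still tolerates $d=\frac{n+7}{2}$ (there it only yields $\gamma(T^*)\le\frac{n-1}{2}$, which is no contradiction); your inequality $g_i\ge 3c_i+1$ closes that slack. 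The identification of the surviving case $T^*\cong T^{\frac{n+1}{2}}_{i,\frac{n-1}{2}-i}$ with a single leafless spine vertex is also correct.

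The genuine gap is in the surgery. You subdivide the edge $v_iv_{i+1}$ joining the unique leafless spine vertex $v_{i+1}$ to its neighbour in the long block, and you justify the decrease of $\rho$ by Lemma \ref{subdivision}; that lemma requires $v_iv_{i+1}$ to lie on an internal path, i.e.\ a vertex of degree at least $3$ on each side. On the right-hand side the nearest candidate is $v_{i+2}$, which has degree $3$ only when the short block has size $j=\frac{n-1}{2}-i\ge 2$. In the boundary case $j=1$, i.e.\ $T^*\cong T^{\frac{n+1}{2}}_{\frac{n-3}{2},1}$ --- a legitimate candidate, since it does have $\gamma=\frac{n-1}{2}$ and $d=\frac{n+3}{2}$ --- the vertex $v_{i+2}$ is the spine end of degree $2$, so $v_iv_{i+1}$ lies on a pendant path hanging at $v_i$, not on an internal path. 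Worse, subdividing a pendant-path edge produces a graph containing the original as a proper subgraph, so by Lemma \ref{subgraph} your double subdivision strictly \emph{increases} $\rho$ in this case, and the two-lemma estimate collapses. The inequality $\rho(T')<\rho(T^*)$ is still true for $j=1$, but proving it needs the paper's surgery, which sidesteps the problem: subdivide $v_{i-1}v_i$ once (both endpoints have degree $3$, since $i\ge\lceil\frac{n-1}{4}\rceil\ge 3$ for $n\ge 13$) and delete the leaf attached to $v_i$; this yields $T^{\frac{n+3}{2}}_{i-1,\frac{n-3}{2}-i+1}$, which by your own gap formula still has domination number $\frac{n-1}{2}$ (one gap of length $4$), and Lemmas \ref{subdivision} and \ref{subgraph} then apply uniformly for every $j\ge 1$. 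Incidentally, the worry in your closing paragraph is misplaced: your gap formula already does the $\gamma$-bookkeeping exactly, so Lemmas \ref{P3-exist} and \ref{half-domination} are not needed; the only real obstruction is the internal-path hypothesis just described.
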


\begin{proof}
Let $P_{d+1}=v_1v_2\cdots v_{d+1}$ be a  diametrical path of $T^*$.
By Lemmas \ref{caterpillar} and \ref{max-degree},
$T^*$ is a caterpillar with maximum degree $3$.
Since $v_1$ and $v_{d+1}$ are leaves in $T^*$,
by Lemma \ref{leave-number}, $v_2$ and $v_{d}$ can not attach any leaf.
Let $L(T^*)$ be the set of  leaves of $T^*$. Then $|L(T^*)\backslash\{v_1,v_{d+1}\}|=n-d-1$ and their neighbors are the subset of $\{v_3,v_4,\ldots,v_{d-1}\}$.
On the one hand, $T^*$ has $n-d+1$ leaves,
from Lemma \ref{dominating-set-support},
we have
\begin{equation}\label{gamma1}
\gamma(T^*)\ge n-d+1.
\end{equation}
On the other hand,
except $n-d+1$ dominating vertices attaching   leaves, the path $P=v_3v_4\cdots v_{d-1}$ also contains at most $\lfloor\frac{(d-3)-(n-d-1)-2}{3}\rfloor+1$ dominating vertices.
Thus,
\begin{equation}\label{gamma2}
\gamma(T^*)\le n-d+2+\lfloor\frac{2d-n-4}{3}\rfloor.
\end{equation}
Note that $\gamma(T^*)=\frac{n-1}{2}$,
combining (\ref{gamma1}) and (\ref{gamma2}) respectively, we have
$\frac{n+3}{2}\le d\le \frac{n+5}{2}$.

Suppose to the contrary  that  $d=\frac{n+3}{2}$. Then
$T^*$ is obtained from $P_{\frac{n+5}{2}}$ by
adding $n-\frac{n+5}{2}=\frac{n-5}{2}$ leaves to
$\frac{n-5}{2}$ vertices in $\{v_3,v_4,\ldots, v_{\frac{n+5}{2}-2}\}$, that is,
 $T^*\cong T^{\frac{n+1}{2}}_{i,\frac{n-1}{2}-i}$ (see Fig.\ref{H}) for some $i\in[\lceil\frac{n-1}{4}\rceil, \frac{n-1}{2}]$.
Note that
$T^{\frac{n+3}{2}}_{i-1,\frac{n-3}{2}-i+1}$ is obtained from $T^{\frac{n+1}{2}}_{i,\frac{n-1}{2}-i}$ by subdividing the edge $v_{i-1}v_i$ and deleting the leaf attached  to $v_i$.
By Lemmas \ref{subdivision} and \ref{subgraph},
we have $\rho(T^{\frac{n+3}{2}}_{i-1,\frac{n-3}{2}-i+1})< \rho(T^{\frac{n+1}{2}}_{i,\frac{n-1}{2}-i})=\rho(T^*)$, a contradiction.
Thus  $d(T^*)=d=\frac{n+5}{2}$.
\end{proof}

\begin{proof}[\bf{Proof of Theorem \ref{extremal-graph}}]
Assume that  $3\le n \le 7$,
since $\gamma(P_n)=\lceil\frac{n}{3}\rceil=\frac{n-1}{2}$,
the path $P_n$ is the desired graph.
Note that  $P_n\cong T^{\frac{n+3}{2}}_{\lceil\frac{n-3}{4}\rceil, \lfloor\frac{n-3}{4}\rfloor }$.
The result holds for $3\le n \le 7$.

If $9\le n\le11$.
By Lemmas \ref{min-tree} and \ref{leave-number},
it is easy to find the tree sets
$$\{T: n(T)=9, \gamma(T)=4, l_T(u)\le 1 \
\mbox{for} \ u\in V(T)\}=\{H_1,\ldots,H_7\} \ \mbox{(see Fig.\ref{H9-11})}$$
and
$$\{T: n(T)=11, \gamma(T)=5, l_T(u)\le 1 \
\mbox{for} \ u\in V(T)\}=\{H_8,\ldots,H_{25}\} \ \mbox{(see Fig.\ref{H9-11})}.$$
From Fig.\ref{H9-11},
 $H_7$ and $H_{25}$ is the minimizer graph for $n=9$ and $n=11$, respectively.
It is clear that  $H_7\cong T^{\frac{n+3}{2}}_{\lceil\frac{n-3}{4}\rceil, \lfloor\frac{n-3}{4}\rfloor }$ for $n=9$ and
$H_{25} \cong T^{\frac{n+3}{2}}_{\lceil\frac{n-3}{4}\rceil, \lfloor\frac{n-3}{4}\rfloor }$ for $n=11$.
The result holds for $9\le n \le 11$.

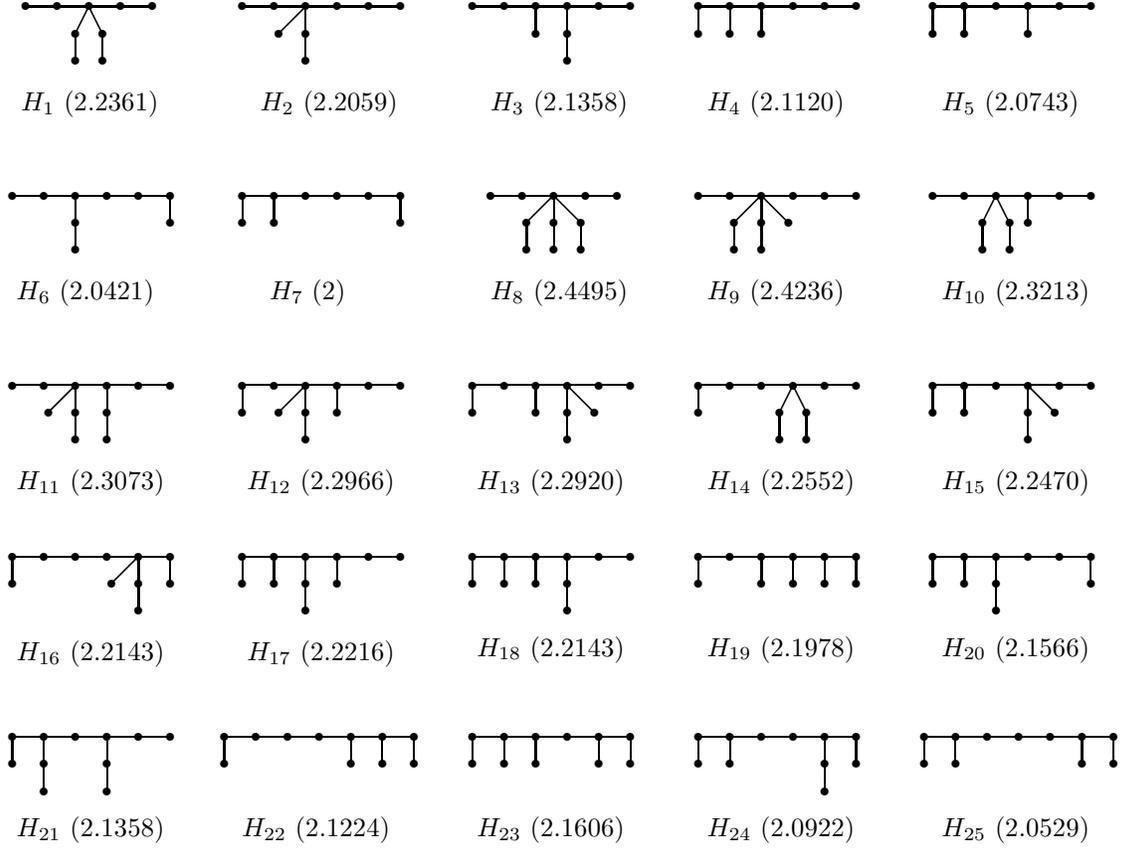
\begin{figure}
  \centering
   \footnotesize

\unitlength 0.6mm 
\linethickness{0.4pt}
\ifx\plotpoint\undefined\newsavebox{\plotpoint}\fi 
\begin{picture}(246,185)(0,0)
\put(110,100){\circle*{2}}
\put(110,100){\line(1,0){7}}
\put(117,100){\circle*{2}}
\put(117,100){\line(1,0){7}}
\put(124,100){\circle*{2}}
\put(124,100){\line(1,0){7}}
\put(131,100){\circle*{2}}
\put(131,100){\line(1,0){7}}
\put(138,100){\circle*{2}}
\put(104,77){$H_{13}$ ($2.2920$)}
\put(103,100){\circle*{2}}
\put(103,100){\line(1,0){7}}
\put(117,100){\line(0,-1){6}}
\put(117,94){\circle*{2}}
\put(103,100){\line(0,-1){6}}
\put(103,94){\circle*{2}}
\put(124,100){\line(0,-1){6}}
\put(124,94){\circle*{2}}
\put(124,94){\line(0,-1){6}}
\put(124,88){\circle*{2}}
\put(124,100){\line(1,-1){6}}
\put(130,94){\circle*{2}}
\put(160,100){\circle*{2}}
\put(160,100){\line(1,0){7}}
\put(167,100){\circle*{2}}
\put(167,100){\line(1,0){7}}
\put(174,100){\circle*{2}}
\put(174,100){\line(1,0){7}}
\put(181,100){\circle*{2}}
\put(181,100){\line(1,0){7}}
\put(188,100){\circle*{2}}
\put(155,77){$H_{14}$ ($2.2552$)}
\put(174,100){\line(-1,-2){3}}
\put(174,100){\line(1,-2){3}}
\put(171,94){\circle*{2}}
\put(171,94){\line(0,-1){6}}
\put(171,88){\circle*{2}}
\put(177,94){\circle*{2}}
\put(177,94){\line(0,-1){6}}
\put(177,88){\circle*{2}}
\put(153,100){\circle*{2}}
\put(153,100){\line(1,0){7}}
\put(153,100){\line(0,-1){6}}
\put(153,94){\circle*{2}}
\put(212,100){\circle*{2}}
\put(212,100){\line(1,0){7}}
\put(219,100){\circle*{2}}
\put(219,100){\line(1,0){7}}
\put(226,100){\circle*{2}}
\put(226,100){\line(1,0){7}}
\put(233,100){\circle*{2}}
\put(233,100){\line(1,0){7}}
\put(240,100){\circle*{2}}
\put(207,77){$H_{15}$ ($2.2470$)}
\put(205,100){\circle*{2}}
\put(205,100){\line(1,0){7}}
\put(205,100){\line(0,-1){6}}
\put(205,94){\circle*{2}}
\put(226,100){\line(0,-1){6}}
\put(226,94){\circle*{2}}
\put(226,94){\line(0,-1){6}}
\put(226,88){\circle*{2}}
\put(226,100){\line(1,-1){6}}
\put(232,94){\circle*{2}}
\put(212,100){\line(0,-1){6}}
\put(212,94){\circle*{2}}
\put(52,100){\circle*{2}}
\put(52,100){\line(1,0){7}}
\put(59,100){\circle*{2}}
\put(59,100){\line(1,0){7}}
\put(66,100){\circle*{2}}
\put(66,100){\line(1,0){7}}
\put(73,100){\circle*{2}}
\put(73,100){\line(1,0){7}}
\put(80,100){\circle*{2}}
\put(53,77){$H_{12}$ ($2.2966$)}
\put(73,100){\line(0,-1){6}}
\put(73,94){\circle*{2}}
\put(80,100){\line(1,0){7}}
\put(87,100){\circle*{2}}
\put(52,100){\line(0,-1){6}}
\put(52,94){\circle*{2}}
\put(66,100){\line(0,-1){6}}
\put(66,94){\circle*{2}}
\put(66,94){\line(0,-1){6}}
\put(66,88){\circle*{2}}
\put(66,100){\line(-1,-1){6}}
\put(60,94){\circle*{2}}
\put(1,100){\circle*{2}}
\put(1,100){\line(1,0){7}}
\put(8,100){\circle*{2}}
\put(8,100){\line(1,0){7}}
\put(15,100){\circle*{2}}
\put(15,100){\line(1,0){7}}
\put(22,100){\circle*{2}}
\put(22,100){\line(1,0){7}}
\put(29,100){\circle*{2}}
\put(2,77){$H_{11}$ ($2.3073$)}
\put(22,100){\line(0,-1){6}}
\put(22,94){\circle*{2}}
\put(29,100){\line(1,0){7}}
\put(36,100){\circle*{2}}
\put(22,94){\line(0,-1){6}}
\put(22,88){\circle*{2}}
\put(15,100){\line(0,-1){6}}
\put(15,94){\circle*{2}}
\put(15,94){\line(0,-1){6}}
\put(15,88){\circle*{2}}
\put(15,100){\line(-1,-1){6}}
\put(9,94){\circle*{2}}
\put(4,184){\circle*{2}}
\put(4,184){\line(1,0){7}}
\put(11,184){\circle*{2}}
\put(11,184){\line(1,0){7}}
\put(18,184){\circle*{2}}
\put(18,184){\line(1,0){7}}
\put(25,184){\circle*{2}}
\put(25,184){\line(1,0){7}}
\put(32,184){\circle*{2}}
\put(3,161){$H_{1}$ ($2.2361$)}
\put(18,184){\line(-1,-2){3}}
\put(18,184){\line(1,-2){3}}
\put(15,178){\circle*{2}}
\put(15,178){\line(0,-1){6}}
\put(15,172){\circle*{2}}
\put(21,178){\circle*{2}}
\put(21,178){\line(0,-1){6}}
\put(21,172){\circle*{2}}
\put(52,184){\circle*{2}}
\put(52,184){\line(1,0){7}}
\put(59,184){\circle*{2}}
\put(59,184){\line(1,0){7}}
\put(66,184){\circle*{2}}
\put(66,184){\line(1,0){7}}
\put(73,184){\circle*{2}}
\put(73,184){\line(1,0){7}}
\put(80,184){\circle*{2}}
\put(56,161){$H_{2}$ ($2.2059$)}
\put(80,184){\line(1,0){7}}
\put(87,184){\circle*{2}}
\put(66,184){\line(0,-1){6}}
\put(66,178){\circle*{2}}
\put(66,178){\line(0,-1){6}}
\put(66,172){\circle*{2}}
\put(66,184){\line(-1,-1){6}}
\put(60,178){\circle*{2}}
\put(103,184){\circle*{2}}
\put(103,184){\line(1,0){7}}
\put(110,184){\circle*{2}}
\put(110,184){\line(1,0){7}}
\put(117,184){\circle*{2}}
\put(117,184){\line(1,0){7}}
\put(124,184){\circle*{2}}
\put(124,184){\line(1,0){7}}
\put(131,184){\circle*{2}}
\put(107,161){$H_{3}$ ($2.1358$)}
\put(124,184){\line(0,-1){6}}
\put(124,178){\circle*{2}}
\put(131,184){\line(1,0){7}}
\put(138,184){\circle*{2}}
\put(124,178){\line(0,-1){6}}
\put(124,172){\circle*{2}}
\put(117,184){\line(0,-1){6}}
\put(117,178){\circle*{2}}
\put(1,142){\circle*{2}}
\put(1,142){\line(1,0){7}}
\put(8,142){\circle*{2}}
\put(8,142){\line(1,0){7}}
\put(15,142){\circle*{2}}
\put(15,142){\line(1,0){7}}
\put(22,142){\circle*{2}}
\put(22,142){\line(1,0){7}}
\put(29,142){\circle*{2}}
\put(2,119){$H_{6}$ ($2.0421$)}
\put(29,142){\line(1,0){7}}
\put(36,142){\circle*{2}}
\put(15,142){\line(0,-1){6}}
\put(15,136){\circle*{2}}
\put(15,136){\line(0,-1){6}}
\put(15,130){\circle*{2}}
\put(36,142){\line(0,-1){6}}
\put(36,136){\circle*{2}}
\put(160,184){\circle*{2}}
\put(160,184){\line(1,0){7}}
\put(167,184){\circle*{2}}
\put(167,184){\line(1,0){7}}
\put(174,184){\circle*{2}}
\put(174,184){\line(1,0){7}}
\put(181,184){\circle*{2}}
\put(181,184){\line(1,0){7}}
\put(188,184){\circle*{2}}
\put(155,161){$H_{4}$ ($2.1120$)}
\put(167,184){\line(0,-1){6}}
\put(167,178){\circle*{2}}
\put(153,184){\circle*{2}}
\put(153,184){\line(1,0){7}}
\put(153,184){\line(0,-1){6}}
\put(153,178){\circle*{2}}
\put(160,184){\line(0,-1){6}}
\put(160,178){\circle*{2}}
\put(212,184){\circle*{2}}
\put(212,184){\line(1,0){7}}
\put(219,184){\circle*{2}}
\put(219,184){\line(1,0){7}}
\put(226,184){\circle*{2}}
\put(226,184){\line(1,0){7}}
\put(233,184){\circle*{2}}
\put(233,184){\line(1,0){7}}
\put(240,184){\circle*{2}}
\put(207,161){$H_{5}$ ($2.0743$)}
\put(205,184){\circle*{2}}
\put(205,184){\line(1,0){7}}
\put(205,184){\line(0,-1){6}}
\put(205,178){\circle*{2}}
\put(212,184){\line(0,-1){6}}
\put(212,178){\circle*{2}}
\put(226,184){\line(0,-1){6}}
\put(226,178){\circle*{2}}
\put(59,142){\circle*{2}}
\put(59,142){\line(1,0){7}}
\put(66,142){\circle*{2}}
\put(66,142){\line(1,0){7}}
\put(73,142){\circle*{2}}
\put(73,142){\line(1,0){7}}
\put(80,142){\circle*{2}}
\put(80,142){\line(1,0){7}}
\put(87,142){\circle*{2}}
\put(58,119){$H_{7}$ ($2$)}
\put(52,142){\circle*{2}}
\put(52,142){\line(1,0){7}}
\put(52,142){\line(0,-1){6}}
\put(52,136){\circle*{2}}
\put(87,142){\line(0,-1){6}}
\put(87,136){\circle*{2}}
\put(59,142){\line(0,-1){6}}
\put(59,136){\circle*{2}}
\put(107,142){\circle*{2}}
\put(107,142){\line(1,0){7}}
\put(114,142){\circle*{2}}
\put(114,142){\line(1,0){7}}
\put(121,142){\circle*{2}}
\put(121,142){\line(1,0){7}}
\put(128,142){\circle*{2}}
\put(128,142){\line(1,0){7}}
\put(135,142){\circle*{2}}
\put(107,119){$H_8$ ($2.4495$)}
\put(121,142){\line(0,-1){6}}
\put(121,136){\circle*{2}}
\put(121,136){\line(0,-1){6}}
\put(121,130){\circle*{2}}
\put(115,136){\circle*{2}}
\put(115,136){\line(0,-1){6}}
\put(115,130){\circle*{2}}
\put(121,142){\line(1,-1){6}}
\put(127,136){\circle*{2}}
\put(127,136){\line(0,-1){6}}
\put(127,130){\circle*{2}}
\put(121,142){\line(-1,-1){6}}
\put(153,142){\circle*{2}}
\put(153,142){\line(1,0){7}}
\put(160,142){\circle*{2}}
\put(160,142){\line(1,0){7}}
\put(167,142){\circle*{2}}
\put(167,142){\line(1,0){7}}
\put(174,142){\circle*{2}}
\put(174,142){\line(1,0){7}}
\put(181,142){\circle*{2}}
\put(181,142){\line(1,0){7}}
\put(188,142){\circle*{2}}
\put(155,119){$H_{9}$ ($2.4236$)}
\put(167,142){\line(0,-1){6}}
\put(167,136){\circle*{2}}
\put(167,136){\line(0,-1){6}}
\put(167,130){\circle*{2}}
\put(167,142){\line(-1,-1){6}}
\put(161,136){\circle*{2}}
\put(161,136){\line(0,-1){6}}
\put(161,130){\circle*{2}}
\put(167,142){\line(1,-1){6}}
\put(173,136){\circle*{2}}
\put(205,142){\circle*{2}}
\put(205,142){\line(1,0){7}}
\put(212,142){\circle*{2}}
\put(212,142){\line(1,0){7}}
\put(219,142){\circle*{2}}
\put(219,142){\line(1,0){7}}
\put(226,142){\circle*{2}}
\put(226,142){\line(1,0){7}}
\put(233,142){\circle*{2}}
\put(207,119){$H_{10}$ ($2.3213$)}
\put(219,142){\line(-1,-2){3}}
\put(219,142){\line(1,-2){3}}
\put(216,136){\circle*{2}}
\put(216,136){\line(0,-1){6}}
\put(216,130){\circle*{2}}
\put(222,136){\circle*{2}}
\put(222,136){\line(0,-1){6}}
\put(222,130){\circle*{2}}
\put(226,142){\line(0,-1){6}}
\put(226,136){\circle*{2}}
\put(233,142){\line(1,0){7}}
\put(240,142){\circle*{2}}
\put(15,62){\circle*{2}}
\put(15,62){\line(1,0){7}}
\put(22,62){\circle*{2}}
\put(22,62){\line(1,0){7}}
\put(29,62){\circle*{2}}
\put(29,62){\line(1,0){7}}
\put(36,62){\circle*{2}}
\put(2,39){$H_{16}$ ($2.2143$)}
\put(8,62){\circle*{2}}
\put(8,62){\line(1,0){7}}
\put(29,62){\line(0,-1){6}}
\put(29,56){\circle*{2}}
\put(29,56){\line(0,-1){6}}
\put(29,50){\circle*{2}}
\put(1,62){\circle*{2}}
\put(1,62){\line(1,0){7}}
\put(1,62){\line(0,-1){6}}
\put(1,56){\circle*{2}}
\put(29,62){\line(-1,-1){6}}
\put(23,56){\circle*{2}}
\put(36,62){\line(0,-1){6}}
\put(36,56){\circle*{2}}
\put(52,62){\circle*{2}}
\put(52,62){\line(1,0){7}}
\put(59,62){\circle*{2}}
\put(59,62){\line(1,0){7}}
\put(66,62){\circle*{2}}
\put(66,62){\line(1,0){7}}
\put(73,62){\circle*{2}}
\put(73,62){\line(1,0){7}}
\put(80,62){\circle*{2}}
\put(53,39){$H_{17}$ ($2.2216$)}
\put(66,62){\line(0,-1){6}}
\put(66,56){\circle*{2}}
\put(66,56){\line(0,-1){6}}
\put(66,50){\circle*{2}}
\put(52,62){\line(0,-1){6}}
\put(52,56){\circle*{2}}
\put(59,62){\line(0,-1){6}}
\put(59,56){\circle*{2}}
\put(73,62){\line(0,-1){6}}
\put(73,56){\circle*{2}}
\put(80,62){\line(1,0){7}}
\put(87,62){\circle*{2}}
\put(110,62){\circle*{2}}
\put(110,62){\line(1,0){7}}
\put(117,62){\circle*{2}}
\put(117,62){\line(1,0){7}}
\put(124,62){\circle*{2}}
\put(124,62){\line(1,0){7}}
\put(131,62){\circle*{2}}
\put(131,62){\line(1,0){7}}
\put(138,62){\circle*{2}}
\put(104,40){$H_{18}$ ($2.2143$)}
\put(124,62){\line(0,-1){6}}
\put(124,56){\circle*{2}}
\put(124,56){\line(0,-1){6}}
\put(124,50){\circle*{2}}
\put(110,62){\line(0,-1){6}}
\put(110,56){\circle*{2}}
\put(117,62){\line(0,-1){6}}
\put(117,56){\circle*{2}}
\put(103,62){\circle*{2}}
\put(103,62){\line(1,0){7}}
\put(103,62){\line(0,-1){6}}
\put(103,56){\circle*{2}}
\put(160,62){\circle*{2}}
\put(160,62){\line(1,0){7}}
\put(167,62){\circle*{2}}
\put(167,62){\line(1,0){7}}
\put(174,62){\circle*{2}}
\put(174,62){\line(1,0){7}}
\put(181,62){\circle*{2}}
\put(181,62){\line(1,0){7}}
\put(188,62){\circle*{2}}
\put(155,40){$H_{19}$ ($2.1978$)}
\put(174,62){\line(0,-1){6}}
\put(174,56){\circle*{2}}
\put(167,62){\line(0,-1){6}}
\put(167,56){\circle*{2}}
\put(153,62){\circle*{2}}
\put(153,62){\line(1,0){7}}
\put(153,62){\line(0,-1){6}}
\put(153,56){\circle*{2}}
\put(181,62){\line(0,-1){6}}
\put(181,56){\circle*{2}}
\put(188,62){\line(0,-1){6}}
\put(188,56){\circle*{2}}
\put(205,62){\circle*{2}}
\put(205,62){\line(1,0){7}}
\put(212,62){\circle*{2}}
\put(212,62){\line(1,0){7}}
\put(219,62){\circle*{2}}
\put(219,62){\line(1,0){7}}
\put(226,62){\circle*{2}}
\put(226,62){\line(1,0){7}}
\put(233,62){\circle*{2}}
\put(207,40){$H_{20}$ ($2.1566$)}
\put(219,62){\line(0,-1){6}}
\put(219,56){\circle*{2}}
\put(219,56){\line(0,-1){6}}
\put(219,50){\circle*{2}}
\put(205,62){\line(0,-1){6}}
\put(205,56){\circle*{2}}
\put(212,62){\line(0,-1){6}}
\put(212,56){\circle*{2}}
\put(233,62){\line(1,0){7}}
\put(240,62){\circle*{2}}
\put(240,62){\line(0,-1){6}}
\put(240,56){\circle*{2}}
\put(1,22){\circle*{2}}
\put(1,22){\line(1,0){7}}
\put(8,22){\circle*{2}}
\put(8,22){\line(1,0){7}}
\put(15,22){\circle*{2}}
\put(15,22){\line(1,0){7}}
\put(22,22){\circle*{2}}
\put(2,0){$H_{21}$ ($2.1358$)}
\put(8,22){\line(0,-1){6}}
\put(8,16){\circle*{2}}
\put(8,16){\line(0,-1){6}}
\put(8,10){\circle*{2}}
\put(1,22){\line(0,-1){6}}
\put(1,16){\circle*{2}}
\put(22,22){\line(1,0){7}}
\put(29,22){\circle*{2}}
\put(22,22){\line(0,-1){6}}
\put(22,16){\circle*{2}}
\put(22,16){\line(0,-1){6}}
\put(22,10){\circle*{2}}
\put(29,22){\line(1,0){7}}
\put(36,22){\circle*{2}}
\put(110,22){\circle*{2}}
\put(110,22){\line(1,0){7}}
\put(117,22){\circle*{2}}
\put(117,22){\line(1,0){7}}
\put(124,22){\circle*{2}}
\put(124,22){\line(1,0){7}}
\put(131,22){\circle*{2}}
\put(131,22){\line(1,0){7}}
\put(138,22){\circle*{2}}
\put(104,0){$H_{23}$ ($2.1606$)}
\put(103,22){\circle*{2}}
\put(103,22){\line(1,0){7}}
\put(103,22){\line(0,-1){6}}
\put(103,16){\circle*{2}}
\put(131,22){\line(0,-1){6}}
\put(131,16){\circle*{2}}
\put(138,22){\line(0,-1){6}}
\put(138,16){\circle*{2}}
\put(117,22){\line(0,-1){6}}
\put(117,16){\circle*{2}}
\put(110,22){\line(0,-1){6}}
\put(110,16){\circle*{2}}
\put(160,22){\circle*{2}}
\put(160,22){\line(1,0){7}}
\put(167,22){\circle*{2}}
\put(167,22){\line(1,0){7}}
\put(174,22){\circle*{2}}
\put(174,22){\line(1,0){7}}
\put(181,22){\circle*{2}}
\put(181,22){\line(1,0){7}}
\put(188,22){\circle*{2}}
\put(155,0){$H_{24}$ ($2.0922$)}
\put(153,22){\circle*{2}}
\put(153,22){\line(1,0){7}}
\put(153,22){\line(0,-1){6}}
\put(153,16){\circle*{2}}
\put(181,22){\line(0,-1){6}}
\put(181,16){\circle*{2}}
\put(188,22){\line(0,-1){6}}
\put(188,16){\circle*{2}}
\put(160,22){\line(0,-1){6}}
\put(160,16){\circle*{2}}
\put(181,16){\line(0,-1){6}}
\put(181,10){\circle*{2}}
\put(62,22){\circle*{2}}
\put(62,22){\line(1,0){7}}
\put(69,22){\circle*{2}}
\put(69,22){\line(1,0){7}}
\put(76,22){\circle*{2}}
\put(76,22){\line(1,0){7}}
\put(83,22){\circle*{2}}
\put(83,22){\line(1,0){7}}
\put(90,22){\circle*{2}}
\put(52,0){$H_{22}$ ($2.1224$)}
\put(76,22){\line(0,-1){6}}
\put(76,16){\circle*{2}}
\put(55,22){\circle*{2}}
\put(55,22){\line(1,0){7}}
\put(83,22){\line(0,-1){6}}
\put(83,16){\circle*{2}}
\put(90,22){\line(0,-1){6}}
\put(90,16){\circle*{2}}
\put(48,22){\circle*{2}}
\put(48,22){\line(1,0){7}}
\put(48,22){\line(0,-1){6}}
\put(48,16){\circle*{2}}
\put(217,22){\circle*{2}}
\put(217,22){\line(1,0){7}}
\put(224,22){\circle*{2}}
\put(224,22){\line(1,0){7}}
\put(231,22){\circle*{2}}
\put(231,22){\line(1,0){7}}
\put(238,22){\circle*{2}}
\put(238,22){\line(1,0){7}}
\put(245,22){\circle*{2}}
\put(207,0){$H_{25}$ ($2.0529$)}
\put(210,22){\circle*{2}}
\put(210,22){\line(1,0){7}}
\put(238,22){\line(0,-1){6}}
\put(238,16){\circle*{2}}
\put(245,22){\line(0,-1){6}}
\put(245,16){\circle*{2}}
\put(203,22){\circle*{2}}
\put(203,22){\line(1,0){7}}
\put(203,22){\line(0,-1){6}}
\put(203,16){\circle*{2}}
\put(210,22){\line(0,-1){6}}
\put(210,16){\circle*{2}}
\end{picture}

  \caption{\footnotesize Trees $H_{1}, \ldots, H_{25}$ and  their spectral radii.}\label{H9-11}
\end{figure}

In what follows, assume $n\ge13$.
By Lemmas \ref{caterpillar}, \ref{max-degree} and \ref{diameter},
$T^*$ is a caterpillar with  $\Delta(T^*)=3$ and $d(T^*)=\frac{n+5}{2}$.
Let $P=v_1v_2\cdots v_{\frac{n+5}{2}}v_{\frac{n+7}{2}}$ be a diametrical path of $T^*$.
Note that $d_{T^*}(v_2)=d_{T^*}(v_{\frac{n+5}{2}})=2$ by Lemma \ref{leave-number}.
Then
$T^*$ is obtained from the  diametrical path $P$ by
adding $n-\frac{n+7}{2}=\frac{n-7}{2}$ leaves to
 $\frac{n-7}{2}$ vertices in $\{v_3,v_4,\ldots, v_{\frac{n+3}{2}}\}$.
  Since $\gamma(T^*)=\frac{n-1}{2}$, we have
 $T^*\cong T^{\frac{n+3}{2}}_{i,\frac{n-3}{2}-i}$ (see Fig.\ref{T}) for some $i\in[\lceil\frac{n-3}{4}\rceil, \frac{n-3}{2}]$.
In order to complete the proof, it suffices to show the following claim.

\begin{clm}\label{claim-2}$\rho(T^{\frac{n+3}{2}}_{i,\frac{n-3}{2}-i})<\rho(T^{\frac{n+3}{2}}_{i+1,\frac{n-3}{2}-i-1})$
for any $i\in[\lceil\frac{n-3}{4}\rceil, \frac{n-5}{2}]$.
\end{clm}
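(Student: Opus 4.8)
The plan is to compare the two trees through their characteristic polynomials and invoke Lemma \ref{compare}. Write $p=i$, $q=\frac{n-5}{2}-i$, and let $v_1v_2\cdots v_m$ with $m=\frac{n+3}{2}$ be the spine. Both $T_i:=T^{\frac{n+3}{2}}_{i,\frac{n-3}{2}-i}$ and $T_{i+1}:=T^{\frac{n+3}{2}}_{i+1,\frac{n-3}{2}-i-1}$ arise from one common caterpillar $B$ — the spine carrying a comb of $p$ teeth on $v_1,\dots,v_i$, a comb of $q$ teeth on $v_{i+5},\dots,v_m$, and four consecutive bare vertices $v_{i+1},v_{i+2},v_{i+3},v_{i+4}$ — by attaching a single pendant to $v_{i+4}$ (giving $T_i$) or to $v_{i+1}$ (giving $T_{i+1}$). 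Note that in the stated range one has $p\ge q+1$, since $i\ge\lceil\frac{n-3}{4}\rceil$ forces $2i\ge\frac{n-3}{2}$.

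First I would expand along the new pendant edge via Lemma \ref{qubian}, obtaining $f(T_i,x)=x\,f(B,x)-f(B-v_{i+4},x)$ and $f(T_{i+1},x)=x\,f(B,x)-f(B-v_{i+1},x)$, so that $f(T_i,x)-f(T_{i+1},x)=f(B-v_{i+1},x)-f(B-v_{i+4},x)$. Deleting $v_{i+1}$ (respectively $v_{i+4}$) disconnects $B$ into a comb and a ``comb-with-a-$P_3$-tail''; writing $g_k=f(P_k\circ K_1,x)$ and $h_k$ for the characteristic polynomial of the graph consisting of three bare spine vertices followed by $P_k\circ K_1$ (the two tailed pieces coincide under the spine-reversal isomorphism), this gives $f(B-v_{i+1},x)=g_p h_q$ and $f(B-v_{i+4},x)=h_p g_q$.

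Next I would record two recurrences. Eliminating the pendant coordinates (the computation underlying Lemma \ref{circ}) yields $g_{k+1}=(x^2-1)g_k-x^2 g_{k-1}$ with $g_0=1$, and a single edge-deletion (Lemma \ref{qubian}) on the tail gives $h_k=x(x^2-2)g_k-x(x^2-1)g_{k-1}$. Substituting the expressions for $h_p,h_q$ makes the difference collapse to $f(T_i,x)-f(T_{i+1},x)=x(x^2-1)\bigl(g_{p-1}g_q-g_p g_{q-1}\bigr)$. The key algebraic step is then the Tur\'an-type identity $g_qg_{p-1}-g_{q-1}g_p=x^{2q}g_{p-1-q}$, which I would prove by showing the bilinear form $R(q,b):=g_qg_b-g_{q-1}g_{b+1}$ satisfies $R(q,b)=x^2R(q-1,b-1)$ (using the $g$-recurrence and $g_q-(x^2-1)g_{q-1}=-x^2g_{q-2}$), and iterating down to $R(0,b-q)=g_{b-q}$.

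Finally, for every $x\ge\rho(T_i)$ all of $g_0,\dots,g_p$ are positive: $g_0=1$, and for $1\le j\le p$ the graph $P_j\circ K_1$ is a (proper) subgraph of $T_i$, so $\rho(P_j\circ K_1)<\rho(T_i)\le x$ by Lemma \ref{subgraph}, whence $g_j(x)>0$. Since also $x^2-1>0$ and $p-1-q\ge0$, I conclude $f(T_i,x)-f(T_{i+1},x)=x(x^2-1)x^{2q}g_{p-1-q}(x)>0$ for all $x\ge\rho(T_i)$, and Lemma \ref{compare} delivers $\rho(T_i)<\rho(T_{i+1})$. The hard part will be the two items in this last stretch: establishing the determinantal identity $g_qg_{p-1}-g_{q-1}g_p=x^{2q}g_{p-1-q}$ cleanly, and securing the positivity of the $g_j$ \emph{uniformly} over the entire ray $x\ge\rho(T_i)$ rather than only at $x=\rho(T_i)$, which is exactly the hypothesis Lemma \ref{compare} requires.
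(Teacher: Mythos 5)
Your proof is correct, and it takes a genuinely different route from the paper's. The paper also begins with Lemma \ref{qubian}, but instead of splitting off a common caterpillar $B$ it expands $T^{d+1}_{i,j}$ at its right end and $T^{d+1}_{i+1,j-1}$ at its left end so that the leading terms cancel, giving $f(T^{d+1}_{i,j},x)-f(T^{d+1}_{i+1,j-1},x)=\bigl[f(T^{d-1}_{i-1,j-1},x)-f(T^{d-1}_{i,j-2},x)\bigr]x^{2}$, and iterates this $j-1$ times to reach the base pair $T^{d+1-(2j-2)}_{i-j+1,1}$ and $T^{d+1-(2j-2)}_{i-j+2,0}$; that pair is then compared via the Hoffman--Smith subdivision lemma (Lemmas \ref{subdivision} and \ref{subgraph}), the resulting spectral-radius inequality is converted back into a polynomial inequality, and Lemma \ref{compare} finishes. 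You instead compute the difference in closed form: after the pendant-edge expansion and the comb recurrences, in your notation $f(T_i,x)-f(T_{i+1},x)=x^{2q+1}(x^{2}-1)\,g_{p-1-q}(x)$, and positivity on the whole ray $x\ge\rho(T_i)$ follows from Lemma \ref{subgraph} alone; your determinant identity, proved via $R(q,b)=x^{2}R(q-1,b-1)$ with the forced convention $g_{-1}=0$, checks out (I verified it on $n=13$, $i=3$), and that convention also covers the boundary case $q=0$, i.e.\ $i=\frac{n-5}{2}$. Your version buys two things. First, it uses Lemma \ref{compare} only once and only in its stated direction, whereas the paper's intermediate step deduces $f(G,x)>f(H,x)$ for $x\ge\rho(G)$ \emph{from} $\rho(G)<\rho(H)$, which is the converse of Lemma \ref{compare} as stated and strictly speaking needs separate justification. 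Second, it yields an explicit factorization of the difference of characteristic polynomials, which is stronger information than the bare inequality, and it avoids the subdivision lemma entirely. What the paper's route buys is brevity and reuse of structural lemmas already deployed elsewhere in the argument; yours carries more algebra but is self-contained and logically tighter.
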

\begin{proof}
For convenience, denote $j=\frac{n-3}{2}-i$ and $d\!+\!1=\frac{n+3}{2}$, that is,
$T^{\frac{n+3}{2}}_{i,\frac{n-3}{2}-i}=T^{d+1}_{i,j}$ and $T^{\frac{n+3}{2}}_{i+1,\frac{n-3}{2}-i-1}=T^{d+1}_{i+1,j-1}$.
By Lemma \ref{qubian}, we obtain
\begin{equation}\label{poly1}
f(T^{d+1}_{i,j},x )=f(T^{d}_{i,j-1},x) f(P_2,x) -f(T^{d-1}_{i,j-2},x)f^2(P_1,x)
\end{equation}
and
\begin{equation}\label{poly2}
f(T^{d+1}_{i+1,j-1},x )=f(T^{d}_{i,j-1},x) f(P_2,x) -f(T^{d-1}_{i-1,j-1},x)f^2(P_1,x).
\end{equation}
Combining (\ref{poly1}) and (\ref{poly2}), we have
\begin{equation}\label{cha1}
f(T^{d+1}_{i,j},x )-f(T^{d+1}_{i+1,j-1},x )= [f(T^{d-1}_{i-1,j-1},x)-f(T^{d-1}_{i,j-2},x)]x^2.
\end{equation}
By Lemma \ref{qubian} again, we get
\begin{equation}\label{poly3}
f(T^{d-1}_{i-1,j-1},x )=f(T^{d-2}_{i-1,j-2},x) f(P_2,x) -f(T^{d-3}_{i-1,j-3},x)f^2(P_1,x)
\end{equation}
and
\begin{equation}\label{poly4}
f(T^{d-1}_{i,j-2},x )=f(T^{d-2}_{i-1,j-2},x) f(P_2,x) -f(T^{d-3}_{i-2,j-2},x)f^2(P_1,x).
\end{equation}
Combining (\ref{cha1}), (\ref{poly3}) and (\ref{poly4}), we have
$$ f(T^{d+1}_{i,j},x )-f(T^{d+1}_{i+1,j-1},x )= [f(T^{d-3}_{i-2,j-2},x)-f(T^{d-3}_{i-1,j-3},x)]x^4.$$
Using Lemma \ref{qubian} $(j-1)$ times as similar as above, we obtain
\begin{equation}\label{cha2}
 f(T^{d+1}_{i,j},x )-f(T^{d+1}_{i+1,j-1},x )= [f(T^{d+1-(2j-2)}_{i-j+1,1},x)-f(T^{d+1-(2j-2)}_{i-j+2,0},x)]x^{2j-2}.
 \end{equation}

\begin{figure}[h]
  \centering
  \footnotesize
\unitlength 0.9mm 
\linethickness{0.4pt}
\ifx\plotpoint\undefined\newsavebox{\plotpoint}\fi 
\begin{picture}(146,17)(0,0)
\put(82,15){\circle*{2}}
\put(82,15){\line(0,-1){8}}
\put(82,7){\circle*{2}}
\put(82,15){\line(1,0){8}}
\put(90,15){\circle*{2}}
\put(90,15){\line(0,-1){8}}
\put(90,7){\circle*{2}}
\put(106,15){\circle*{2}}
\put(106,15){\line(0,-1){8}}
\put(106,7){\circle*{2}}
\put(106,15){\line(1,0){8}}
\put(114,15){\circle*{2}}
\put(114,15){\line(1,0){8}}
\put(122,15){\circle*{2}}
\put(89.93,14.93){\line(1,0){.9412}}
\put(91.812,14.93){\line(1,0){.9412}}
\put(93.694,14.93){\line(1,0){.9412}}
\put(95.577,14.93){\line(1,0){.9412}}
\put(97.459,14.93){\line(1,0){.9412}}
\put(99.341,14.93){\line(1,0){.9412}}
\put(101.224,14.93){\line(1,0){.9412}}
\put(103.106,14.93){\line(1,0){.9412}}
\put(104.989,14.93){\line(1,0){.9412}}
\put(104,17){$v_{i\!-\!j\!+\!1}$}
\put(81,17){$v_{1}$}
\put(89,17){$v_{2}$}
\put(122,15){\line(1,0){8}}
\put(130,15){\circle*{2}}
\put(97,0){$T^{d+1-(2j-2)}_{i-j+1,1}$}
\put(129,15){\line(1,0){8}}
\put(137,15){\circle*{2}}
\put(137,15){\line(1,0){8}}
\put(145,15){\circle*{2}}
\put(1,15){\circle*{2}}
\put(1,15){\line(0,-1){8}}
\put(1,7){\circle*{2}}
\put(1,15){\line(1,0){8}}
\put(9,15){\circle*{2}}
\put(9,15){\line(0,-1){8}}
\put(9,7){\circle*{2}}
\put(25,15){\circle*{2}}
\put(25,15){\line(0,-1){8}}
\put(25,7){\circle*{2}}
\put(25,15){\line(1,0){8}}
\put(33,15){\circle*{2}}
\put(33,15){\line(0,-1){8}}
\put(33,7){\circle*{2}}
\put(33,15){\line(1,0){8}}
\put(41,15){\circle*{2}}
\put(8.93,14.93){\line(1,0){.9412}}
\put(10.812,14.93){\line(1,0){.9412}}
\put(12.694,14.93){\line(1,0){.9412}}
\put(14.577,14.93){\line(1,0){.9412}}
\put(16.459,14.93){\line(1,0){.9412}}
\put(18.341,14.93){\line(1,0){.9412}}
\put(20.224,14.93){\line(1,0){.9412}}
\put(22.106,14.93){\line(1,0){.9412}}
\put(23.989,14.93){\line(1,0){.9412}}
\put(32,17){$v_{i\!-\!j\!+\!2}$}
\put(23,17){$v_{i\!-\!j\!+\!1}$}
\put(0,17){$v_{1}$}
\put(8,17){$v_{2}$}
\put(15,0){$T^{d+1-(2j-2)}_{i-j+2,0}$}
\put(41,15){\line(1,0){8}}
\put(49,15){\circle*{2}}
\put(49,15){\line(1,0){8}}
\put(57,15){\circle*{2}}
\end{picture}

  \caption{\footnotesize Graphs $T^{d+1-(2j-2)}_{i-j+2,0}$ and $T^{d+1-(2j-2)}_{i-j+1,1}$. }\label{T-compare}
\end{figure}
Clearly,
$T^{d+1-(2j-2)}_{i-j+1,1}$ is obtained from $T^{d+1-(2j-2)}_{i-j+2,0}$ by subdividing the edge $v_{i-j+1}v_{i-j+2}$ and deleting the leaf adjacent to $v_{i-j+2}$ (see  Fig.\ref{T-compare}).
By Lemmas \ref{subdivision} and \ref{subgraph},
we have $$\rho(T^{d+1-(2j-2)}_{i-j+1,1})< \rho(T^{d+1-(2j-2)}_{i-j+2,0}).$$
By Lemma \ref{compare}, we have
$f(T^{d+1-(2j-2)}_{i-j+1,1},x)>f(T^{d+1-(2j-2)}_{i-j+2,0},x)$ for  $x\ge \rho(T^{d+1-(2j-2)}_{i-j+1,1})$.
From (\ref{cha2}) we get
$$f(T^{d+1}_{i,j},x )>f(T^{d+1}_{i+1,j-1},x )$$
for any $x\ge \rho(T^{d+1}_{i-j+1,1})$.
Thus, $\rho(T^{d+1}_{i,j})<\rho(T^{d+1}_{i+1,j-1})$, and so
the claim holds.
\end{proof}

From Claim \ref{claim-2}, $T^{\frac{n+3}{2}}_{\lceil\frac{n-3}{4}\rceil, \lfloor\frac{n-3}{4}\rfloor }$ has the minimum spectral radius among all graphs in
$\{T^{\frac{n+3}{2}}_{i,\frac{n-3}{2}-i}: \ i\in[\lceil\frac{n-3}{4}\rceil, \frac{n-3}{2}]\}$.
Thus, $T^* \cong T^{\frac{n+3}{2}}_{\lceil\frac{n-3}{4}\rceil, \lfloor\frac{n-3}{4}\rfloor }$ for $n\ge3$.
\end{proof}

\section*{Acknowledgements}
The authors would like to express their sincere gratitude to the referees for their very careful reading of the paper and
for insightful comments and valuable suggestions, which improved the presentation of this paper.

{\small

}


\begin{thebibliography}{xx}
\bibitem{Aouchiche-Hansen-Stevanovic-2009}
M. Aouchiche, P. Hansen, D. Stevanovi\'c, Variable neighborhood search for extremal graphs. XVII.
Further conjectures and results about the index,
 Discuss. Math. Graph Theory 29 (2009) 15--37.


\bibitem{Cvetkovic-Rowlinson-Simic-2010}
D. Cvetkovi\'{c}, P. Rowlinson, S. Simi\'{c}, An Introduction to the Theory of Graph Spectra, Cambrige University Press, New York, 2010.

\bibitem{Brualdi-Solheid}
 R.A. Brualdi, E.S. Solheid, On the spectral radius of complementary acyclic matrices of zeros and ones, SIAM J.
Algebra Discrete Methods 7 (1986) 265--272.

\bibitem{Berman-Zhang-2001}
 A. Berman, X.D. Zhang, On the spectral radius of graphs with cut vertices, J. Combin. Theory Ser. B 83 (2001)
233--240.


\bibitem{Chen-He-2012}
P. Chen, C.X. He, On the spectral radii of trees with given domination number, Adv. Math.
41 (2012) 225--232.

 \bibitem{3-153}
  E.R. van Dam, Graphs with given diameter maximizing the spectral radius, Linear Algebra Appl. 426 (2007) 454--457.

\bibitem{Dam-Kooij-2007}
E.R. van Dam, R.E. Kooij, The minimal spectral radius of graphs with a given diameter,
Linear Algebra Appl. 423 (2007) 408--419.

\bibitem{Feng-Yu-Zhang-2007}
 L.H. Feng, G.H. Yu, X.D. Zhang, Spectral radius of graphs with given matching number, Linear Algebra Appl. 422
(2007) 133--138.

\bibitem{Feng-Li-Zhang-2007}
 L.H. Feng, Q. Li, X.D. Zhang, Spectral radii of graphs with given chromatic number, Appl. Math. Lett. 20 (2007)
158--162.

\bibitem{Guan-Ye-2011}
T.T. Guan, M.L. Ye, A Lower bound on spectral radius of graphs with given domination
number 2, Journal of Anqing Teachers College (Natural Science Edition) 17 (2011) 13--15. (in Chinese).

\bibitem{Guo-Wang-2001}
J-M. Guo, S.W. Tan, On the spectral radius of trees, Linear Algebra Appl. 329 (2001) 1--8.

\bibitem{Guo-Shao-2006}
J-M. Guo, J.Y. Shao, On the spectral radius of trees with fixed diameter, Linear Algebra
Appl. 413 (2006) 131--147.


\bibitem{Hoffman-Smith-1975}
A.J. Hoffman, J.H. Smith, in: Fiedler (Ed.), Recent Advances in Graph Theory, Academic Praha, 1975, pp. 273--281.


\bibitem{Ji-Lu-2016}
C.Y. Ji, M. Lu, On the spectral radius of trees with given independence number, Linear Algebra
Appl. 488 (2016) 102--108.


\bibitem{Konig1931} D. K\"{o}nig, Graphs and Matrices, Mat. Fiz. Lapok 38 (1931) 116--119.

\bibitem{Li-Feng-1979}
Q. Li, K.Q. Feng, On the largest eigenvalues of graphs, Acta Math. Appl. Sinica 2 (1979) 167--175. (in Chinese).




\bibitem{Liu-li}
C. Liu, J.P. Li, Z. Xie, The minimum spectral radius of graphs with a given domination number, Linear Algebra
Appl.  673 (2023) 233--258.


\bibitem{Lou-Guo-2022}
Z.Z. Lou, J-M. Guo, The spectral radius of graphs with given independence number, Discrete
Math. 345 (2022) 112778.

\bibitem{Nikiforov}
    V. Nikiforov, Bounds on graph eigenvalues. II, Linear Algebra Appl. 427 (2007) 183--189.

\bibitem{Ore-1962}
  O. Ore, Theory of graphs, Amer Math Soc Colloq Publ. 1962.


\bibitem{3-Stevanovic}
    D. Stevanovi\'{c}, Spectral Radius of Graphs, Academic Press, Amsterdam, 2015.

\bibitem{Stevanovic-Aouchiche-Hansen-2008}
 D. Stevanovi\'c, M. Aouchiche, P. Hansen, On the spectral radius of graphs with a given
domination number, Linear Algebra Appl. 428 (2008) 1854--1864.


\bibitem{Wang-Huang-2010}
J.F. Wang, Q.X. Huang, X.H. An, F. Belardo, Some notes on graphs whose spectral radius
is close to $\frac{3}{2}\sqrt{2}$, Linear Algebra Appl. 429 (2008) 1606--1618.

\bibitem{Xu-Hong-Shu-Zhai-2009}
M.M. Xu, Y. Hong, J.L. Shu, M.Q. Zhai, The minimum spectral radius of graphs with a given
independence number, Linear Algebra Appl. 431 (2009) 937--945.


\end{thebibliography}
\end{document}